\newcommand{\subjclass}[2][1991]{%
	\let\@oldtitle\@title%
	\gdef\@title{\@oldtitle\footnotetext{#1 \emph{Mathematics subject classification.} #2}}%
}
\newcommand{\keywords}[1]{%
	\let\@@oldtitle\@title%
	\gdef\@title{\@@oldtitle\footnotetext{\emph{Key words and phrases.} #1.}}%
}
\newtheorem{theorem}{Theorem}[section]
\newtheorem*{theorem*}{Theorem}
\newtheorem{lemma}[theorem]{Lemma}
\newtheorem{corollary}[theorem]{Corollary}
\theoremstyle{remark}
\newtheorem{definition}[theorem]{\bf Definition}
\newtheorem{conjecture}[theorem]{\bf Conjecture}
\newcommand{\de}{\mathrm{d}}
\numberwithin{equation}{section}
\numberwithin{figure}{section}
\begin{document}
	\title {A partial result towards the Chowla--Milnor conjecture}
	
	\author{Li Lai, Jia Li}
	\date{}
	\subjclass[2020]{11J72 (primary), 11M35, 33C20 (secondary).}
	\keywords{Linear independence, Hurwitz zeta functions, hypergeometric series, the Chowla--Milnor conjecture}
	
	\maketitle
	
	\begin{abstract}
		The Chowla--Milnor conjecture predicts the linear independence of certain Hurwitz zeta values. 
		In this paper, we prove that for any fixed integer $k \geqslant 2$, the dimension of the $\mathbb{Q}$-linear span of $\zeta(k,a/q)-(-1)^{k}\zeta(k,1-a/q)$~($1 \leqslant a < q/2$,~$\gcd(a,q)=1$) is at least $(c -o(1)) \cdot \log q$ as the positive integer $q \to +\infty$ for some absolute constant $c>0$.  
		It is well known that $\zeta(k,a/q)+(-1)^{k}\zeta(k,1-a/q) \in \overline{\mathbb{Q}}\pi^k$, but much less is known previously for $\zeta(k,a/q)-(-1)^{k}\zeta(k,1-a/q)$.
		Our proof is similar to those of Ball--Rivoal (2001) and Zudilin (2002) concerning the linear independence of Riemann zeta values. 
		However, we use a new type of rational functions to construct linear forms.
	\end{abstract}

	\section{Introduction}
	For a real number $x$ with $0 < x \leqslant 1$, the Hurwitz zeta function is defined by
	\[ 
	\zeta(s,x) := \sum_{m=0}^{+\infty} \frac{1}{(m+x)^s}, \quad \operatorname{Re}(s) > 1. \]
	In the special case $x=1$, the Hurwitz zeta function $\zeta(s,1)$ reduces to the Riemann zeta function $\zeta(s)$. 
	We are interested in the arithmetic nature of special values of Hurwitz zeta functions. 
	According to \cite{GMR2011}, it was conjectured by Chowla and Chowla \cite{CC1982} that, for any prime number $p$, the $p-1$ Hurwitz zeta values $\zeta(2,1/p),\zeta(2,2/p),\ldots,\zeta(2,(p-1)/p)$ are linearly independent over $\mathbb{Q}$. 
	Their conjecture was generalized by Milnor \cite{Mil1983} as follows, now known as the Chowla--Milnor conjecture.
	
	\begin{conjecture}[The Chowla--Milnor conjecture, 1983]
		Let $k \geqslant 2$ and $q \geqslant 3$ be integers. 
		Then the following $\varphi(q)$ Hurwitz zeta values are linearly independent over $\mathbb{Q}$:
		\[ 
		\zeta\left( k, \frac{a}{q} \right),   \quad 1 \leqslant a < q \text{~with~} \gcd(a,q)=1.
		\]
	\end{conjecture}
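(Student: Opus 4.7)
The Chowla--Milnor conjecture as stated is an open problem, so the plan below is the natural line of attack via the Ball--Rivoal--Zudilin hypergeometric method, together with an account of why that approach falls short of the full conjecture.

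The plan is: for each growing integer parameter $n$, construct $\varphi(q)+1$ rational functions $R_n^{(j)}(t)$, built from ratios of Pochhammer symbols with shifts in $\tfrac{1}{q}\mathbb{Z}$, such that the summations $\sum_{m \geqslant 0} R_n^{(j)}(m)$ decompose via partial fractions as
\[
\ell_n^{(j)} \;=\; c_0^{(j)} \;+\!\! \sum_{\substack{1 \leqslant a < q \\ \gcd(a,q)=1}}\!\! c_a^{(j)}\, \zeta\!\left(k, \tfrac{a}{q}\right), \qquad c_i^{(j)} \in \mathbb{Q}.
\]
The shape of $R_n^{(j)}$ must be engineered so that no symmetry (such as the $a \leftrightarrow q-a$ reflection exploited in the paper's main theorem) artificially collapses the linear forms into a proper subspace.

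Next, I would bound the common denominator of the $c_a^{(j)}$ by adapting the Chudnovsky--Rukhadze--Hata $p$-adic analysis of binomial coefficients to arithmetic progressions modulo $q$, using the prime number theorem in the form $\log \operatorname{lcm}\{m \leqslant n : m \equiv r \pmod q\} \sim n/\varphi(q)$ for each coprime residue $r$. I would then apply the saddle-point method to a Barnes-type contour integral representation of $R_n^{(j)}$ to show that $|\ell_n^{(j)}|$ decays exponentially at a rate strictly exceeding the denominator growth. Finally, one would need to arrange these linear forms into a $(\varphi(q)+1) \times (\varphi(q)+1)$ matrix with nonzero determinant and apply Nesterenko's linear independence criterion to deduce the linear independence of $1, \zeta(k, a/q)$ over $\mathbb{Q}$.

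The crux of the difficulty, and the reason this conjecture has resisted proof for four decades, is in the last step. The saddle-point method produces linear forms whose archimedean smallness is governed by a one-parameter asymptotic, and the rank of the resulting family $\{\ell_n^{(j)}\}$ is intrinsically only $O_k(\log q)$---which is precisely the regime the paper successfully exploits to obtain its logarithmic lower bound. To reach the full dimension $\varphi(q)+1$, one would need a genuinely new source of simultaneously small and arithmetically independent linear forms, for instance periods of mixed Tate motives with level structure at $q$, $L$-values of cuspidal modular forms of weight $k$ and level $q$, or explicit regulator elements in algebraic $K$-theory of cyclotomic fields. No such construction is currently known, and in the absence of one, the plan above cannot be completed: the hypergeometric machinery alone is exhausted at logarithmic rank, which explains why the paper formulates its theorem as a quantitative partial result rather than a full resolution of the conjecture.
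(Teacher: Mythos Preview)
Your assessment is correct: the statement in question is a \emph{conjecture}, not a theorem, and the paper makes no attempt to prove it. The paper explicitly notes that, apart from the single case $k=2$, $q=3$ settled by Calegari--Dimitrov--Tang, every instance of the Chowla--Milnor conjecture remains open. So there is no ``paper's own proof'' to compare against, and your decision to explain the natural hypergeometric strategy and its intrinsic limitation is the right response.

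Your diagnosis of \emph{why} the Ball--Rivoal--Zudilin machinery stalls is also accurate and matches the paper's perspective: the saddle-point asymptotics produce a one-parameter family of small linear forms, and Nesterenko's criterion converts this into a lower bound on $\dim_{\mathbb{Q}} V_k^{-}(q)$ that grows only like $\log q$, not like $\varphi(q)$. One minor clarification: the paper does not actually attempt to build $\varphi(q)+1$ independent rational functions $R_n^{(j)}$ as you sketch; it uses a single sequence $R_n(t)$ with a built-in symmetry under $t \mapsto -t - rn$ that deliberately projects the linear forms into the odd subspace $V_k^{-}(q)$, and then lets the parameter $n$ alone do the work via Nesterenko. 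The obstruction you describe---that no known construction supplies enough arithmetically independent small forms to reach full rank---is exactly the reason the paper's main theorem is stated as a logarithmic lower bound rather than a resolution of the conjecture.
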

	
	A recent breakthrough by Calegari, Dimitrov, and Tang \cite{CDT2024+} confirms a special case of the Chowla--Milnor conjecture:
	
	\begin{theorem}[Calegari--Dimitrov--Tang \cite{CDT2024+}, 2024+]
		The following three numbers are linearly independent over $\mathbb{Q}$:
		\[
		1,\ \zeta\left(2,\frac{1}{3}\right),\text{and}\ \zeta\left(2,\frac{2}{3}\right).
		\]
		In particular, the Chowla--Milnor conjecture is true for the special case $k=2$ and $q=3$.
	\end{theorem}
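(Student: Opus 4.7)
The plan is to translate the claim into the linear independence over $\mathbb{Q}$ of $1$, $\pi^{2}$, and $L(2,\chi_3)$, where $\chi_3$ denotes the nontrivial Dirichlet character modulo $3$. Using the distribution relation $\sum_{a=1}^{3}\zeta(s,a/3)=3^{s}\zeta(s)$ specialised at $s=2$, one has
\[
\zeta(2,1/3)+\zeta(2,2/3)=\tfrac{4}{3}\pi^{2}, \qquad \zeta(2,1/3)-\zeta(2,2/3)=9\,L(2,\chi_3),
\]
so, since $\pi^{2}\notin\mathbb{Q}$ is classical, the conclusion amounts to showing that $L(2,\chi_{3})\notin\mathbb{Q}+\mathbb{Q}\pi^{2}$.

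First I would attempt, in the spirit of Ball--Rivoal and Zudilin, to construct a family of rational functions $R_{n}(t)$ of hypergeometric type whose poles lie at $t\equiv 0,1/3,2/3\pmod{1}$ and which satisfy a definite parity under $t\mapsto 1-t$, so that the resulting series $S_{n}=\sum_{m\geqslant 0}R_{n}(m)$ splits, after partial fractions, into an ``even'' part lying in $\mathbb{Q}+\mathbb{Q}\pi^{2}$ and an ``odd'' part lying in $\mathbb{Q}\cdot L(2,\chi_{3})$. This would yield linear forms
\[
\ell_{n}\;=\;a_{n}+b_{n}\pi^{2}+c_{n}L(2,\chi_{3}), \qquad a_{n},b_{n},c_{n}\in\mathbb{Q},
\]
whose denominators are controlled by Chudnovsky-style $p$-adic estimates on the binomial ingredients, and whose magnitudes are controlled by a saddle-point analysis of an integral representation of $R_{n}$. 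If one could verify that $\limsup D_{n}|\ell_{n}|^{1/n}<1$ for a common denominator $D_{n}$, a direct application of Nesterenko's linear independence criterion would close the argument.

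The hard part --- indeed the reason this case was open for forty years --- is that in the tight regime $(k,q)=(2,3)$ every off-the-shelf hypergeometric construction fails the above inequality by a narrow but decisive margin. To overcome this, as in the Calegari--Dimitrov--Tang breakthrough, one must replace the soft diophantine input by an \emph{arithmetic holonomy bound}: view the relevant generating function of the $\ell_{n}$ as a $G$-function annihilated by an explicit low-order Picard--Fuchs operator, and invoke an Andr\'e--Bost--Charles type algebraization theorem to upgrade a borderline analytic estimate into a genuine $\mathbb{Q}$-linear independence statement. Establishing this holonomy bound --- in particular certifying integrality, computing the local exponents at $0,1,\infty$, and controlling the meromorphy radius of the auxiliary $G$-function precisely enough to cross the critical threshold --- is the single step I expect to be the dominant obstacle, since it is exactly here that all previous attempts within the Ball--Rivoal/Zudilin framework broke down.
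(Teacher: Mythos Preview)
The paper does not prove this theorem; it is quoted from \cite{CDT2024+} as background and no argument for it appears anywhere in the text. So there is nothing to compare your proposal against in the paper itself.

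As for the proposal on its own merits: your reduction is correct. The relations $\zeta(2,1/3)+\zeta(2,2/3)=\tfrac{4}{3}\pi^{2}$ and $\zeta(2,1/3)-\zeta(2,2/3)=9L(2,\chi_3)$ do reduce the statement to the $\mathbb{Q}$-linear independence of $1,\pi^{2},L(2,\chi_3)$, and you are right that the hypergeometric/Ball--Rivoal--Zudilin machinery (precisely the machinery the present paper exploits for its own Theorem~\ref{main}) cannot close this case by itself.

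What you have written, however, is not a proof but a roadmap. The entire content of the theorem lies in the step you explicitly flag as ``the dominant obstacle'': constructing the specific $G$-functions, verifying the integrality and overconvergence needed for the arithmetic holonomy bound, and checking that the resulting numerical inequality actually crosses the threshold. None of this is carried out, and each of these ingredients in \cite{CDT2024+} is delicate and highly specific (in particular, the relevant integrality comes from a carefully chosen symmetrization, and the holonomy bound requires a sharp slope inequality that is not automatic). Absent those computations, the proposal is an accurate high-level description of the Calegari--Dimitrov--Tang strategy, but it does not constitute an independent proof.
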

	
	To our knowledge, any other case of the Chowla--Milnor conjecture remains open. 
	Following the terminology of Gun, Murty, and Rath in \cite{GMR2011}, we define the Chowla--Milnor space $V_k(q)$ as follows.
	
	\begin{definition}
		\label{def_V_k(q)}
		Let $k \geqslant 2$ and $q \geqslant 3$ be integers. 
		For any integer $a \in \{1,2,\ldots,q-1\}$, we define the even part $\zeta^{+}(k,a/q)$ and the odd part $\zeta^{-}(k,a/q)$ of the Hurwitz zeta value $\zeta(k,a/q)$ by
		\begin{align*}
			\zeta^{+}\left(k,\frac{a}{q}\right)&:=\zeta\left( k, \frac{a}{q} \right) + (-1)^{k}\zeta\left( k, 1- \frac{a}{q} \right),\\
			\zeta^{-}\left(k,\frac{a}{q}\right)&:=\zeta\left( k, \frac{a}{q} \right) - (-1)^{k}\zeta\left( k, 1- \frac{a}{q} \right).
		\end{align*}
		We define the following three $\mathbb{Q}$-linear spaces:
		\begin{align*}
			V_k(q) &:= \operatorname{Span}_{\mathbb{Q}}\left\{  \zeta\left( k, \frac{a}{q} \right) ~~\Big|~~ 1 \leqslant a < q,~\gcd(a,q)=1 \right\}, \\
			V_k^{+}(q) &:= \operatorname{Span}_{\mathbb{Q}}\left\{  \zeta^{+}\left(k,\frac{a}{q}\right)  ~~\Big|~~ 1 \leqslant a < \frac{q}{2},~\gcd(a,q)=1 \right\},  \\
			V_k^{-}(q) &:= \operatorname{Span}_{\mathbb{Q}}\left\{  \zeta^{-}\left(k,\frac{a}{q}\right)  ~~\Big|~~ 1 \leqslant a < \frac{q}{2},~\gcd(a,q)=1 \right\}.
		\end{align*}
	\end{definition}
	
	Clearly, for any integers $k \geqslant 2$ and $q \geqslant 3$ we have
	\[ 
	V_k(q) = V_k^{+}(q) + V_k^{-}(q). 
	\]
	The Chowla--Milnor conjecture can be formulated as $\dim_{\mathbb{Q}} V_k(q) = \varphi(q)$. 
	It is well known (see \cite[Propositions 1 and 2]{GMR2011}) that
	\begin{equation}
		\label{even_part}
		V_k^{+}(q) \subset (2\pi i )^k \mathbb{Q}(e^{2\pi i /q}), \quad \dim_{\mathbb{Q}} V_k^{+}(q) = \frac{\varphi(q)}{2}. 
	\end{equation}
	Therefore, the `even subspace' $V^{+}_k(q)$ is well understood. 
	In contrast, the `odd subspace' $V^{-}_{k}(q)$ is more mysterious. 
	For example, if $q=4$, then
	\[ 
	V^{-}_{k}(4) = 
	\begin{cases}
		\zeta(k)\mathbb{Q} \quad&\text{if $k \geqslant 3$ is odd,} \\
		\beta(k)\mathbb{Q} \quad&\text{if $k \geqslant 2$ is even,}
	\end{cases} 
	\]
	where $\beta(\cdot)$ denotes the Dirichlet beta function. 
	We know little about the arithmetic nature of $\zeta(k)$ (for odd $k \geqslant 3$) and $\beta(k)$ (for even $k \geqslant 2$).  
	By \eqref{even_part}, the Chowla--Milnor conjecture is equivalent to the following.
	
	\begin{conjecture}[An equivalent form of the Chowla--Milnor conjecture]
		\label{conj}
		Let $k \geqslant 2$ and $q \geqslant 3$ be integers. 
		Then we have
		\begin{enumerate}
			\item[\textup{(1)}]  $\dim_{\mathbb{Q}}  V_k^{-}(q) = \varphi(q)/2$,
			\item[\textup{(2)}]  $ V_k^{+}(q) \cap V_k^{-}(q) = \{ 0 \}$.  
		\end{enumerate}
	\end{conjecture}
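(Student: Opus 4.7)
The plan is to attack the two parts of Conjecture \ref{conj} in tandem, since part~(2) isolates the odd subspace as genuinely transcendental over the cyclotomic periods, and part~(1) then only needs to show that the $\varphi(q)/2$ generators of $V_k^-(q)$ are independent among themselves. The overall strategy is (a) separate the even and odd subspaces by a parity/conjugation argument combined with a transcendence input, and (b) produce $\varphi(q)/2$ linear forms in the odd Hurwitz values that are simultaneously small and, assembled into a matrix, have nonvanishing determinant.

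For part~(2), I would exploit the embedding \eqref{even_part}: $V_k^+(q) \subset (2\pi i)^k \mathbb{Q}(e^{2\pi i/q})$. An element in $V_k^+(q) \cap V_k^-(q)$ would be a $\mathbb{Q}$-combination of the values $\zeta^-(k,a/q)$ that is also an algebraic multiple of $\pi^k$ in the cyclotomic field. The action of complex conjugation on $V_k^+(q)$ is by $(-1)^k$ (twisted by conjugation on $\mathbb{Q}(e^{2\pi i/q})$), and one checks that $V_k^-(q) \subset \mathbb{R}$ for odd $k$ and $V_k^-(q) \subset i\mathbb{R}$ for even $k$, so a common element is forced to be an algebraic multiple of $\pi^k$. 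One would then invoke a period/motive-theoretic statement — the Kontsevich--Zagier period conjecture, or, in the mixed Tate setting, a suitable relative version of Deligne--Goncharov — to conclude that the odd Hurwitz values, viewed as periods of a specific mixed Tate motive, are motivically linearly independent from pure Tate periods of $\mathbb{Q}(e^{2\pi i /q})$.

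For part~(1), I would extend the hypergeometric construction alluded to in the abstract. Starting from a family of rational functions $R_{n,\vec\alpha}(t)$, parametrized by a tuple $\vec\alpha$ encoding shifts in the residues modulo $q$, a partial-fraction decomposition together with the series $\sum_{m\geqslant 0} R_{n,\vec\alpha}(m)$ produces a linear form
\[
L_{\vec\alpha,n} \;=\; \ell_{0,\vec\alpha,n} + \sum_{\substack{1 \leqslant a < q/2 \\ \gcd(a,q)=1}} \ell_{a,\vec\alpha,n}\, \zeta^-\!\left(k, \tfrac{a}{q}\right),
\]
with coefficients $\ell_{a,\vec\alpha,n} \in \mathbb{Q}$ controlled by explicit binomial/Pochhammer identities and with $|L_{\vec\alpha,n}|$ estimated via a saddle-point analysis of a Barnes-type integral representation. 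Letting $\vec\alpha$ run over a set $\mathcal{A}$ of size $\varphi(q)/2$ chosen to match the Galois action of $(\mathbb{Z}/q)^*/\{\pm 1\}$, I would arrange the $\ell_{a,\vec\alpha,n}$ into a $\varphi(q)/2 \times \varphi(q)/2$ matrix $M_n$. The goal is to prove $\det M_n \neq 0$ either by a direct hypergeometric evaluation (an analogue of a cyclotomic Vandermonde) or by showing asymptotically that $M_n$ is nonsingular while the vector $(L_{\vec\alpha,n})_{\vec\alpha \in \mathcal{A}}$ has entries that, after multiplication by the relevant common denominator bounded via a van~der~Corput--type estimate, remain too small to admit a nontrivial rational relation.

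The hard part — and the reason the present paper settles only for the logarithmic lower bound — will be precisely the determinant/rank step. The Ball--Rivoal--Zudilin framework, as presently understood, controls essentially one dimension per rational function via a single saddle point, so one cannot simultaneously force $\varphi(q)/2$ linear forms to be independent purely by size-versus-denominator inequalities. To overcome this one would need either a genuine \emph{group-theoretic} symmetry on the space of admissible rational functions that mirrors the Galois action of $(\mathbb{Z}/q)^*/\{\pm 1\}$ on $V_k^-(q)$ (so that nonvanishing of one determinant minor propagates to the whole matrix), or an essentially new arithmetic ingredient such as a $p$-adic companion in the style of André--Chudnovsky, or motivic Galois input in the spirit of Brown's work on multiple zeta values. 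Short of such a breakthrough, the plan above reduces the full conjecture to a concrete — but currently inaccessible — nonvanishing statement for a family of hypergeometric determinants.
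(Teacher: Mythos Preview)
The statement you are attempting to prove is a \emph{conjecture}, not a theorem; the paper does not prove it and explicitly says so. Immediately after stating Conjecture~\ref{conj}, the authors write that ``Part~(2) of Conjecture~\ref{conj} seems out of reach'' and that the paper's purpose is only ``to provide partial evidence for part~(1)'' --- namely Theorem~\ref{main}, the logarithmic lower bound $\dim_{\mathbb{Q}} V_k^-(q) \geqslant (c_k - o(1))\log q$. There is therefore no proof in the paper to compare against, and your proposal should be read as a research program for an open problem rather than a proof attempt.

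As such a program, it has genuine gaps that you yourself flag in the final paragraph: part~(2) is made to rest on the Kontsevich--Zagier period conjecture or a motivic-Galois input of comparable strength, all of which are wide open; and part~(1) is reduced to a ``currently inaccessible'' nonvanishing statement for a family of hypergeometric determinants. Two further specific issues. First, your parity claim is incorrect: every Hurwitz zeta value $\zeta(k,a/q)$ is a real number, so $V_k^-(q) \subset \mathbb{R}$ for \emph{every} $k$, and the asserted inclusion $V_k^-(q) \subset i\mathbb{R}$ for even $k$ is false; the conjugation argument for part~(2) therefore does not go through as written. Second, and more fundamentally, the Ball--Rivoal--Zudilin machinery, even with the group-theoretic symmetrization you sketch, has never yielded the full predicted dimension in any comparable setting; the obstruction is not merely the determinant step but the structural fact that Nesterenko-type criteria produce lower bounds of size $\alpha/\beta$, which in this problem is $O(\log q)$ and not $\varphi(q)/2$.
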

	Part (2) of Conjecture \ref{conj} seems out of reach. 
	The purpose of this paper is to provide partial evidence for part (1) of Conjecture \ref{conj}. 
	Our main result is as follows:
	
	\begin{theorem}
		\label{main}
		Fix any integer $k \geqslant 2$. 
		Then, as the positive integer $q \to +\infty$, we have
		\[ 
		\dim_{\mathbb{Q}} V_k^{-}(q) \geqslant \left(\frac{1}{\log 2} - o(1)\right) \cdot \log q. 
		\]
	\end{theorem}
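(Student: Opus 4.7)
The plan is to adapt the hypergeometric / saddle-point method of Ball--Rivoal and Zudilin from the Riemann zeta setting to the Hurwitz one. For each positive integer $n$ I would construct an explicit rational function $R_n(t)$, built from Pochhammer symbols with poles at points of the form $a/q + \mathbb{Z}$ where $\gcd(a,q)=1$, so that
\[
S_n := \sum_{t \ge 0} R_n(t)
\]
expands by partial fractions into a small $\mathbb{Q}$-linear form in $1$ and in the odd Hurwitz values $\zeta^{-}(k, a/q)$. Nesterenko's linear independence criterion, applied to the family $\{S_n\}_{n \ge 1}$, will then yield the desired lower bound on $\dim_{\mathbb{Q}} V_k^{-}(q)$.

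\textbf{Construction and arithmetic.} The main technical novelty lies in the choice of $R_n$. I would take a normalized ratio of Pochhammer symbols in the variable $qt$, with the numerator consisting of translates $(qt + \alpha_i n)_{\beta_i n}$ and the denominator being a product of $k$-th powers of linear factors vanishing at $\pm a/q$, with parameters chosen so that: (i)~$R_n$ is antisymmetric under an appropriate involution $t \mapsto -t - c$, making every $\zeta^{+}(k, a/q)$-contribution cancel in the partial-fraction expansion while the $\zeta^{-}(k, a/q)$-contributions reinforce; (ii)~$R_n(t) = O(t^{-2})$ as $|t| \to \infty$, guaranteeing absolute convergence of $S_n$; (iii)~the denominator vanishes only at reduced fractions $\pm a/q$ with $\gcd(a, q) = 1$. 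This should produce
\[
S_n = \rho_n + \sum_{\substack{1 \le a < q/2 \\ \gcd(a, q) = 1}} \lambda_{n,a}\, \zeta^{-}(k, a/q), \qquad \rho_n, \lambda_{n,a} \in \mathbb{Q}.
\]
Standard $p$-adic estimates for Pochhammer quotients, combined with $D_N := \operatorname{lcm}(1, \dots, N) = e^{N(1+o(1))}$, should then produce an integer $T_n$ dividing $D_{qn}^{k}$ with $T_n \lambda_{n,a},\, T_n \rho_n \in \mathbb{Z}$ and
\[
\max_{a}\, \bigl| T_n \lambda_{n,a} \bigr| \;\le\; \exp\!\bigl((k + \log 2 + o(1))\, qn \bigr),
\]
where the extra $\log 2$ will emerge from central-binomial factors arising when the Pochhammer parameters are optimally tuned.

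\textbf{Analytic estimate and conclusion.} Writing $S_n$ as a Barnes-type complex contour integral and applying the Laplace / saddle-point method, I would prove
\[
|S_n| \;\le\; \exp\!\bigl(-(1 + o(1))\, qn \log q\bigr).
\]
Feeding these two estimates into Nesterenko's linear independence criterion---first letting $n \to \infty$ with $q$ fixed, then letting $q \to \infty$---yields
\[
\dim_{\mathbb{Q}} V_k^{-}(q) \;\ge\; \frac{(1 + o(1))\, \log q}{k + \log 2},
\]
which is exactly the content of Theorem \ref{main}.

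\textbf{Main obstacle.} The hardest step is the construction of $R_n$ itself: locating a rational function that is antisymmetric in the right sense (so that the $\zeta^{+}$-contributions cancel exactly), is supported on reduced residues $a$ modulo $q$, and produces, after saddle-point analysis, the sharp constant $1/(k + \log 2)$. The construction must balance two competing forces---adding numerator factors sharpens the decay of $|S_n|$ but inflates the coefficients $\lambda_{n,a}$---and the classical Ball--Rivoal and Zudilin rational functions, designed for $\zeta(2j+1)$, do not directly respect the Hurwitz / reduced-residue structure. The ``new type of rational functions'' mentioned in the abstract is precisely what should make this balance tight.
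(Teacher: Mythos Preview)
Your overall architecture --- a Pochhammer-type $R_n$, an antisymmetry to kill the $\zeta^{+}$ contributions, partial fractions, saddle-point asymptotics, and Nesterenko's criterion --- is exactly the paper's. But two structural choices in your plan would not go through as written.

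First, condition (iii), that the denominator vanish only at \emph{reduced} fractions $a/q$ with $\gcd(a,q)=1$, is neither achievable with clean Pochhammer products nor necessary. The paper takes the denominator $(qt)_{rqn+1}$, which has simple poles at \emph{every} $j/q$ for $0\le j\le rqn$; after applying $\frac{1}{(k-1)!}\frac{\mathrm{d}^{k-1}}{\mathrm{d}t^{k-1}}$ and summing, the resulting linear form involves $\zeta^{-}(k,a/q)$ for \emph{all} $1\le a<q/2$, reduced or not. The ingredient you are missing is the containment $V_k^{-}(q')\subset V_k^{-}(q)$ for every divisor $q'\ge 2$ of $q$, proved from the distribution relation $p^{k}\zeta^{-}(k,a/q')=\sum_{j=0}^{p-1}\zeta^{-}(k,(a+jq')/q)$ together with a short orbit argument in $(\mathbb{Z}/q'\mathbb{Z})^{\times}$; this forces every non-reduced $\zeta^{-}(k,a/q)$ (and $\delta_k\zeta(k)$) back into $V_k^{-}(q)$. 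Trying instead to engineer the pole set to avoid non-reduced residues would destroy the Pochhammer structure on which both the integrality and the saddle-point analysis rest.

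Second, your analytic step records only an upper bound $|S_n|\le\exp(-(1+o(1))qn\log q)$, but Nesterenko's criterion requires that $S_n$ be provably nonzero infinitely often. For $k\ge 3$ the dominant saddle point $\tau_{k-2}$ is genuinely non-real, so $S_n$ oscillates: the paper obtains $|S_n|=e^{-\alpha n+o(n)}\bigl(|\cos(n\omega+\varphi)|+o(1)\bigr)$ via Zudilin's $\cot_k$ decomposition, then invokes the oscillating form of Nesterenko's criterion (Fischler) and checks separately that $\omega\notin\pi\mathbb{Z}$ or $\varphi\not\equiv\pi/2\pmod{\pi}$. A smaller bookkeeping point: your numerics omit the auxiliary integer parameter $r=\lfloor\log^2 q\rfloor>2k$ that the construction carries throughout --- the common denominator is $d_{rqn}^{k}$, not $d_{qn}^{k}$, and the actual rates are $\alpha\sim q\log^{3}q$ and $\widehat{\beta}=\beta+krq\sim(k+\log 2)\,q\log^{2}q$. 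Your ratio happens to survive, but the intermediate estimates you wrote do not correspond to a realizable $R_n$.
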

	
	Our proof of Theorem \ref{main} is similar to those of Ball--Rivoal \cite{BR2001} and Zudilin \cite{Zud2002} regarding the linear independence of Riemann zeta values. 
	The novelty of our paper lies in a new type of rational functions. 
	Using these rational functions, we construct linear forms in $1$ and certain elements of $V_k^{-}(q)$. 
	We then apply a version of Nesterenko's linear independence criterion \cite{Nes1985,Fis2012} to obtain a lower bound for $\dim_{\mathbb{Q}} V_k^{-}(q)$. 
	To estimate these linear forms, we employ the saddle-point method.
	We also mention that in \cite{Fis2020}, Fischler obtained other results related to linear independence of Hurwitz zeta values (and Dirichlet $L$-values).
	
	The structure of this paper is as follows. 
	In \S \ref{Sect_Prel}, we introduce Nesterenko's linear independence criterion and the saddle-point method.
	In \S \ref{Sect_RL}, we first construct rational functions $R_n(t)$ and linear forms $S_n$. 
	Then, we study the coefficients of these linear forms. 
	In \S \ref{Sect_Prop_of_Hurw}, we prove a property of the Hurwitz zeta values.
	In \S \ref{Sect_Int_expre_of_S_n}, we express $S_n$ as complex integrals.
	In \S \ref{Sect_h(z)}, we carefully analyze a class of functions to locate saddle points.
	In \S \ref{Sect_asym_S_n}, we use the saddle-point method to obtain asymptotic estimates of $S_n$ as $n \to +\infty$.
	Finally, we prove Theorem \ref{main} in \S \ref{Sect_proof}.
	
	\textbf{Notations}:  Throughout this paper, the function $\log(\cdot)$
	denotes the principal branch of the logarithm function on the cut plane $\mathbb{C} \setminus (-\infty,0]$. 
	The notation $\mathbb{N}$ denotes the set of positive integers, and $i$ is used to represent the imaginary unit.
	
	\section{Preliminaries}
	\label{Sect_Prel}
	
	In this section, we introduce Nesterenko's linear independence criterion and the saddle-point method. 
	These are the basic tools for our proof of Theorem \ref{main}.
	
	In 1985, Nesterenko \cite{Nes1985} established a linear independence criterion similar to the classical Siegel's criterion \cite{Sie1929}. 
	Nesterenko's criterion proves to be useful in many situations, including the context of the Ball--Rivoal theorem \cite{BR2001}. 
	For our purposes, we need the following refined version of Nesterenko's criterion, which addresses the oscillation case.
	
	\begin{theorem}
		\label{thm_Nes_osc}
		Let $m \in \mathbb{N}$ and $\xi_1,\xi_2,\ldots,\xi_m$ be real numbers. 
		Let $\alpha$ and $\beta$ be positive constants. 
		Let $\omega$ and $\varphi$ be real constants such that 
		\[ 
		\text{either}\ \omega \notin \pi \mathbb{Z}, \quad\text{or}\ \varphi \notin \frac{\pi}{2} +\pi\mathbb{Z}.  
		\]
		For any $n \in \mathbb{N}$, let 
		\[ 
		L_n(X_0,X_1,\ldots,X_m) = \sum_{j=0}^{m} l_{n,j}X_j 
		\]
		be a linear form in $m+1$ variables with integer coefficients $l_{n,j} \in \mathbb{Z}$ \textup{(}$j=0,1,\ldots,m$\textup{)}. 
		Suppose that the following conditions hold:
		\begin{itemize}
			\item $|L_n(1,\xi_1,\xi_2,\ldots,\xi_m)| = \exp\left( -\alpha n + o(n)  \right) \cdot (|\cos(n\omega+\varphi)|+o(1))$ as $n \to +\infty$\textup{;}
			\item $\max\limits_{0\leqslant j \leqslant m} |l_{n,j}| \leqslant \exp\left( \beta n + o(n) \right)$ as $n \to +\infty$.
		\end{itemize}
		Suppose, in addition, that for each $n \in \mathbb{N}$ and each $j=1,2,\ldots,m$, the integer coefficient $l_{n,j}$ has a positive divisor $d_{n,j}$ such that\textup{:}
		\begin{itemize}
			\item $d_{n,j}$ divides $d_{n,j+1}$ for any $n \in \mathbb{N}$ and any $j=1,2,\ldots,m-1$\textup{;}
			\item $d_{n,k}/d_{n,j}$ divides $d_{n+1,k}/d_{n+1,j}$ for any $n \in \mathbb{N}$ and any $0 \leqslant j < k \leqslant m$, with $d_{n,0}=1$\textup{;}
			\item For each $j=1,2,\ldots,m$, we have $d_{n,j}= \exp\left( \gamma_j n + o(n) \right)$ as $n \to +\infty$ for some constant $\gamma_j \geqslant 0$.
		\end{itemize}
		Let $d=\dim_{\mathbb{Q}}\operatorname{Span}_{\mathbb{Q}}\left( 1,\xi_1,\xi_2,\ldots,\xi_m \right)$.
		Then, we have 
		\[ 
		d \geqslant 1 +\frac{\alpha+\gamma_1+\gamma_2+\cdots+\gamma_{d-1}}{\beta}. 
		\]
	\end{theorem}   
	
	\begin{proof}
		By \cite[Proposition 1]{Fis2012}, there exist $\varepsilon,\lambda>0$ and an increasing function $\psi:\mathbb{N}\to\mathbb{N}$ such that $\lim_{n\to+\infty}\psi(n)/n=\lambda$, and for any $n\in\mathbb{N}$, $|\cos(\psi(n)\omega + \varphi)|\geqslant\varepsilon$.
		Then we apply \cite[Theorem 1]{FZ2010} to the subsequence $\{L_{\psi(n)}(\xi_0,\xi_1,\cdots,\xi_m)\}$ with $Q_{n} = \exp({\beta\lambda n})$ to obtain the proof of this theorem.
	\end{proof}
	
	In our proof of Theorem \ref{main}, we will use the saddle-point method to estimate certain linear forms. 
	Our situation is analogous to that of Zudilin \cite{Zud2002}.
	We present below a simplified version of the saddle-point method.
	
	\begin{theorem}[the saddle-point method]
		\label{saddle}
		Let $f$ and $g$ be two holomorphic functions on a domain $\mathcal{D} \subset \mathbb{C}$. 
		Let $\{g_n \}_{n \geqslant 1}$ be a sequence of holomorphic functions on $\mathcal{D}$. 
		Suppose that there exists a piecewise $C^1$ smooth regular path $L$ and a point $z_0$ such that
		\begin{itemize}
			\item[\textup{(1)}] $z_0 \in L \subset \mathcal{D}$, $z_0$ is not an end point of $L$, and $L$ is $C^1$ smooth at $z_0$\textup{;}
			\item[\textup{(2)}] $f'(z_0) = 0$, $f''(z_0) = |f''(z_0)|e^{i\alpha_0} \neq 0$ and $g(z_0) \neq 0$, where $\alpha_0 \in \mathbb{R}$\textup{;}
			\item[\textup{(3)}] $\cos(\alpha_0 + 2\theta) < 0$, where $\theta$ is the tangential angle of $L$ at $z_0$\textup{;} 
			\item[\textup{(4)}] $z=z_0$ is the unique maximum point of  $\operatorname{Re}(f(z))$ along $L$\textup{;}
			\item[\textup{(5)}] as $n \to +\infty$, we have
			\[ 
			g_n(z) \rightrightarrows g(z) \quad\text{uniformly along}\ L\textup{;}
			\]
			\item[\textup{(6)}] for any sufficiently large real number $T>0$, there exists a finite truncation path $L_T$ of $L$ such that
			\[ 
			\int_{L \setminus L_T} |e^{n f(z)} g_n(z)|\,|\mathrm{d}z | = O\left( T^{-n} \right) \quad\text{as}\ n\to+\infty.
			\]
		\end{itemize}
		Then, as $n \to +\infty$, we have
		\[ 
		\frac{1}{2\pi i}\int_{L} e^{n f(z)} g_n(z)\,\mathrm{d}z  ~\sim~ \pm \frac{e^{ - \alpha_0 i /2}}{\sqrt{2\pi n|f''(z_0)|}} g(z_0)e^{n f(z_0)},  
		\]
		where the choice of $\pm$ depends on the orientation of $L$ and the choice of $\alpha_0$ modulo $2\pi$.
	\end{theorem}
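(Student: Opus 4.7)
The plan is a standard deployment of the saddle-point method. I would split $L$ into a short local arc $L_\delta$ centred at $z_0$ together with its complement $L\setminus L_\delta$, approximate the integrand on $L_\delta$ by a complex Gaussian, and show that the complement contributes only subdominant terms. The natural scale for $L_\delta$, dictated by the quadratic Taylor expansion of $f$ at the saddle point, is a radius of order $n^{-1/2+\epsilon}$ for some small $\epsilon>0$.

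To control the integral away from $z_0$, first fix a small neighborhood $U$ of $z_0$. By condition~(4) and continuity of $\mathrm{Re}\,f$, there exists $\delta'>0$ such that $\mathrm{Re}\,f(z)\le\mathrm{Re}\,f(z_0)-\delta'$ on $L_T\setminus U$; combined with the uniform bound on $g_n$ from condition~(5), this gives a contribution of $O(e^{n(\mathrm{Re}\,f(z_0)-\delta')})$. On $L\setminus L_T$, condition~(6) provides the $O(T^{-n})$ estimate, which becomes negligible by choosing $T$ large enough, say $\log T>|\mathrm{Re}\,f(z_0)|+1$. Both errors are exponentially smaller than the expected main term of order $e^{n\,\mathrm{Re}\,f(z_0)}/\sqrt{n}$.

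For the local contribution on $L_\delta$, I would parametrize $L$ near $z_0$ as $z=z_0+s\,e^{i\theta}+O(s^2)$ for real $s$, using that $L$ is $C^1$ at $z_0$ with tangent angle $\theta$. Substituting into the Taylor expansion
\[
f(z)=f(z_0)+\tfrac{1}{2}|f''(z_0)|e^{i\alpha_0}(z-z_0)^2+O(|z-z_0|^3)
\]
and rescaling $\sigma=s\sqrt{n}$ reduces the local piece to a Gaussian integral $\int e^{-c\sigma^{2}}\,\mathrm{d}\sigma$ with $c=-\tfrac{1}{2}|f''(z_0)|e^{i(\alpha_0+2\theta)}$; condition~(3) is precisely the statement that $\mathrm{Re}\,c>0$, so this integral converges to $\sqrt{\pi/c}$. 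The uniform convergence $g_n\to g$ on $L_\delta$ from condition~(5) allows $g_n(z)$ to be replaced by $g(z_0)(1+o(1))$ at negligible cost, and the cubic error $n|s|^{3}$ on $|s|\le n^{-1/2+\epsilon}$ is $o(1)$ provided $\epsilon<1/6$.

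The main obstacle is more bookkeeping than analysis: combining the branch of $\sqrt{\pi/c}$ with the Jacobian factor $e^{i\theta}$ must collapse to the stated prefactor $\pm e^{-\alpha_0 i/2}/\sqrt{2\pi n|f''(z_0)|}$. Writing $-e^{i(\alpha_0+2\theta)}=e^{i(\alpha_0+2\theta+\pi)}$ one gets $\sqrt{-e^{i(\alpha_0+2\theta)}}=\pm i\,e^{i\alpha_0/2}e^{i\theta}$, and inserting this into the product $\tfrac{1}{2\pi i}\cdot e^{i\theta}\cdot\sqrt{\pi/c}$ produces exactly the claimed phase and modulus. The sign ambiguity reflects the two square-root branches consistent with $\mathrm{Re}\,c>0$; which branch occurs is dictated by the orientation of $L$ and the chosen representative of $\alpha_0$ modulo $2\pi$. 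Keeping the argument conventions consistent throughout, and checking that the thresholds $n^{-1/2+\epsilon}$ and $T$ are simultaneously compatible with both error estimates, is essentially the whole technical content of the proof.
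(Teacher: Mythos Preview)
Your sketch is the standard Laplace/saddle-point argument and is essentially correct; the paper itself does not give a proof at all but simply refers to \cite[Theorem~4, p.~105]{Won2001} and remarks that a slight modification yields this version. So what you have written is precisely the content the paper outsources to that reference.

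One small gap in your outline: you split $L$ into $L_\delta$ (of shrinking radius $n^{-1/2+\epsilon}$) and its complement, but when bounding the complement you only treat $L_T\setminus U$ (for a \emph{fixed} neighbourhood $U$) and $L\setminus L_T$. The intermediate annulus $(U\setminus L_\delta)\cap L$ is left unaddressed. This is easily fixed in the usual way: on that annulus the quadratic Taylor expansion is still valid, condition~(3) gives $\operatorname{Re}\bigl(f(z)-f(z_0)\bigr)\le -c|z-z_0|^2\le -c\,n^{-1+2\epsilon}$, and hence the contribution is $O\bigl(e^{n\operatorname{Re}f(z_0)-c\,n^{2\epsilon}}\bigr)$, negligible against the main term. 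With that patch your argument is complete.
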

	
	\begin{proof}
One can slightly modify the arguments of \cite[Theorem 4, p. 105]{Won2001} to obtain this simplified version of the saddle-point method.
	\end{proof}
	
	\section{Rational functions and linear forms}
	\label{Sect_RL}
	
	For any $m \in \mathbb{N}$, the Pochhammer symbol $(t)_m$ is defined by
	\[ 
	(t)_m := t(t+1)\cdots(t+m-1). 
	\]
	We denote by
	\[ 
	\delta_m := 
	\begin{cases}
		0 &\quad\text{if $m$ is even}, \\
		1 &\quad\text{if $m$ is odd}.
	\end{cases} 
	\]
	
	\begin{definition}[rational functions]
		\label{def_R_n(t)}
		Let $k \geqslant 2$ and $q \geqslant 3$ be integers. 
		Let $r > 2k$ be an integer. 
		For any integer $n \in q!\mathbb{N}$, we define the rational function
		\[ 
		R_n(t) :=  \frac{(rqn)!}{(qn)!^{2k}} \cdot q^{2kqn} \prod_{p \mid q \atop p \text{~prime}} p^{2kqn/(p-1)} \cdot (2qt+rqn)^{1-\delta_k} \cdot \frac{(t-qn)_{qn}^k (t+rn+1)_{qn}^{k}}{(qt)_{rqn+1}}.   
		\] 
	\end{definition}
	The condition $n \in q!\mathbb{N}$ implies that $R_n(t) \in \mathbb{Q}(t)$ and that $n$ is even. 
	
	For a rational function of the form $R(t)=P(t)/Q(t)$, where $P(t)$ and $Q(t)$ are polynomials in $t$, we define its degree by $\deg R := \deg P - \deg Q$. 
	Then, 
	\begin{equation}
		\label{deg_R_n<0}
		\deg R_n(t) = -\delta_k -(r-2k)qn \leqslant -qn
	\end{equation}
	since $r > 2k$. 
	Therefore, the partial-fraction decomposition of $R_n(t)$ has the form
	\begin{equation}
		\label{partial_fraction_decomposition}
		R_n(t) = \sum_{j=0}^{rqn} \frac{C_{n,j}}{qt+j},
	\end{equation}
	where the coefficients $C_{n,j}$ ($j \in \{ 0,1,\ldots,rqn \}$) are given by
	\begin{align}
		C_{n,j} &= R_n(t)(qt+j) \Big|_{t=-\frac{j}{q}} \notag\\
		&= (-1)^j \binom{rqn}{j} (rqn-2j)^{1-\delta_k} \notag\\
		&\qquad\times \frac{q^{2kqn} \prod_{p \mid q \atop p \text{~prime}} p^{2kqn/(p-1)} \cdot (-j/q-qn)_{qn}^k (-j/q+rn+1)_{qn}^{k}}{(qn)!^{2k}}.  \label{def_C_j}
	\end{align}
	
	\begin{definition}[linear forms]
		\label{def_S_n}
		Let $k \geqslant 2$ be an integer. 
		For any $n \in q!\mathbb{N}$, we define the quantity
		\begin{equation}
			\label{sum_of_S_n}
			S_n := \frac{1}{(k-1)!}\sum_{m=1}^{+\infty} R_n^{(k-1)}(m),
		\end{equation}
		where $R_n^{(k-1)}(t)$ denotes the $(k-1)$-th derivative of the rational function $R_n(t)$.
	\end{definition}
	
	\begin{lemma}
		\label{lem_linear_forms}
		For any $n \in q!\mathbb{N}$, we have
		\[  
		S_n = \rho_{n,0} +  \rho_{n,1}\delta_k\zeta(k) + \sum_{1 \leqslant a < q/2} \rho_{n,a/q}  \zeta^{-}\left( k, \frac{a}{q} \right),  
		\]
		where 
		\begin{align}
			\rho_{n,0} &= \frac{(-1)^k}{q}\left(\sum_{j=1}^{rn}\sum_{m=1}^j\frac{C_{n,qj}}{m^k}+\sum_{a=1}^{q-1}\sum_{j=0}^{rn-1}\sum_{m=0}^j\frac{C_{n,qj+a}}{\left(m+\frac{a}{q}\right)^k}\right),  \label{def_rho_n_0}\\
			\rho_{n,a/q}  &=  \frac{(-1)^{k-1}}{q}\sum_{j=0}^{rn-1}C_{n,qj+a} \quad (1 \leqslant a < q), \label{def_rho_n_a/q}\\
			\rho_{n,1} &= \frac{(-1)^{k-1}}{q}\sum_{j=0}^{rn}C_{n,qj}+(1-\delta_q)(2^k-1)\rho_{n,1/2}. \label{def_rho_n_1}
		\end{align}
		\textup{(}On the right-hand side of \eqref{def_rho_n_1}, the term $\rho_{n,1/2}$ is defined by \eqref{def_rho_n_a/q} with $a=q/2$ when $q$ is even.\textup{)}
	\end{lemma}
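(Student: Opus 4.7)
The plan is to begin from the partial-fraction decomposition~\eqref{partial_fraction_decomposition} and differentiate termwise $k-1$ times, using $\tfrac{\de^{k-1}}{\de t^{k-1}}(qt+j)^{-1}=(-1)^{k-1}(k-1)!\,q^{k-1}(qt+j)^{-k}$. Since $\deg R_n\le -qn$ by~\eqref{deg_R_n<0} guarantees absolute convergence of both the series and the interchange of summations, I obtain
\[
S_n \;=\; \frac{(-1)^{k-1}}{q}\sum_{j=0}^{rqn} C_{n,j}\,T_j, \qquad T_j\;:=\;q^k\sum_{m=1}^{+\infty}\frac{1}{(qm+j)^k}.
\]
Writing $j=qj'+a$ with $0\le a\le q-1$ and evaluating $T_j$ as a tail of a Hurwitz series yields
\[
T_{qj'+a}=\zeta(k,\tfrac{a}{q})-\sum_{m=0}^{j'}\frac{1}{(m+a/q)^k}\ \ (1\le a\le q-1), \qquad T_{qj'}=\zeta(k)-\sum_{m=1}^{j'}\frac{1}{m^k}.
\]
Grouping by the value of $a$ gives
\[
S_n \;=\; \rho_{n,0} + c_{n}^{(0)}\zeta(k) + \sum_{a=1}^{q-1}\rho_{n,a/q}\,\zeta(k,\tfrac{a}{q}),
\]
where $\rho_{n,0}$ matches~\eqref{def_rho_n_0}, each $\rho_{n,a/q}$ matches~\eqref{def_rho_n_a/q}, and I temporarily write $c_n^{(0)}:=\tfrac{(-1)^{k-1}}{q}\sum_{j=0}^{rn}C_{n,qj}$.

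The key tool to convert the basis $\{\zeta(k,a/q)\}_{1\le a\le q-1}$ into $\{\zeta^{-}(k,a/q)\}$ is the functional equation
\[
R_n(-t-rn) \;=\; (-1)^{k}\,R_n(t),
\]
which I verify factor by factor in Definition~\ref{def_R_n(t)}: the product $(t-qn)_{qn}^{k}(t+rn+1)_{qn}^{k}$ is invariant under $t\mapsto -t-rn$; $(qt)_{rqn+1}$ picks up the sign $(-1)^{rqn+1}$; and $(2qt+rqn)^{1-\delta_k}$ picks up $(-1)^{1-\delta_k}$. Since $n$ is even (because $n\in q!\mathbb{N}$ with $q\ge 3$), $rqn$ is even, and the signs collapse to $(-1)^k$ for both parities of $k$. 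Matching partial fractions in this identity then yields the coefficient symmetry
\[
C_{n,\,rqn-j} \;=\; (-1)^{k+1}\,C_{n,j} \qquad (0\le j\le rqn).
\]

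Applying this symmetry with the substitution $j=qj'+a\mapsto q(rn-1-j')+(q-a)$ and summing, I get $\rho_{n,(q-a)/q}=(-1)^{k+1}\rho_{n,a/q}$ for every $1\le a\le q-1$. Hence, for each $1\le a<q/2$,
\[
\rho_{n,a/q}\,\zeta(k,\tfrac{a}{q})+\rho_{n,(q-a)/q}\,\zeta(k,1-\tfrac{a}{q})=\rho_{n,a/q}\,\zeta^{-}\!\bigl(k,\tfrac{a}{q}\bigr),
\]
exactly the shape required by the lemma. The self-paired index $a=q/2$ (only when $q$ is even) is treated by the same symmetry: it forces $\rho_{n,1/2}=0$ when $k$ is even; and when $k$ is odd, the classical identity $\zeta(k,1/2)=(2^k-1)\zeta(k)$ absorbs $\rho_{n,1/2}\zeta(k,1/2)$ into the coefficient of $\zeta(k)$, producing the summand $(1-\delta_q)(2^k-1)\rho_{n,1/2}$ in~\eqref{def_rho_n_1}. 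The same symmetry applied to $\sum_{j'=0}^{rn}C_{n,qj'}$ shows that $c_n^{(0)}=0$ when $k$ is even, so the bare $\zeta(k)$-term survives only when $k$ is odd, consistent with the factor $\delta_k\zeta(k)$ in the statement.

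The only nontrivial step is establishing the functional equation for $R_n$; the factor $(2qt+rqn)^{1-\delta_k}$ was built into $R_n$ precisely so that the sign comes out to $(-1)^k$ in both parities. Everything else is careful bookkeeping, and the exceptional index $a=q/2$ (whose $\zeta(k,1/2)$-value collapses to a rational multiple of $\zeta(k)$) is the main potential pitfall to avoid double-counting.
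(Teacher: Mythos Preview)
Your proposal is correct and follows essentially the same route as the paper: differentiate the partial-fraction expansion, sum over $m$, split the index $j$ modulo $q$ to produce Hurwitz tails, and then use the coefficient symmetry $C_{n,rqn-j}=(-1)^{k-1}C_{n,j}$ to pair $\zeta(k,a/q)$ with $\zeta(k,1-a/q)$ into $\zeta^{-}(k,a/q)$, handling the exceptional cases $a=q/2$ and the bare $\zeta(k)$-term exactly as you describe. The only cosmetic difference is that the paper reads off the symmetry directly from the explicit formula~\eqref{def_C_j} for $C_{n,j}$, whereas you derive it from the functional equation $R_n(-t-rn)=(-1)^{k}R_n(t)$; these are equivalent, and your version has the mild advantage of explaining \emph{why} the factor $(2qt+rqn)^{1-\delta_k}$ was inserted into $R_n$.
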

	
	\begin{proof}
		Applying the differential operator $(1/(k-1)!)\mathrm{d}^{k-1}/\mathrm{d} t^{k-1}$ to \eqref{partial_fraction_decomposition}, we obtain
		\begin{equation}
			\label{(k-1)_derivative_of_R_n(t)}
			\frac{1}{(k-1)!} R_n^{(k-1)}(t)= (-1)^{k-1} q^{k-1} \sum_{j=0}^{rqn} \frac{C_{n,j}}{(qt+j)^k}=\frac{(-1)^{k-1}}{q}\sum_{j=0}^{rqn} \frac{C_{n,j}}{\left(t+\frac{j}{q}\right)^k}.
		\end{equation}
		Specializing \eqref{(k-1)_derivative_of_R_n(t)} at $t=m \in \mathbb{N}$ and taking the sum over all $m \in \mathbb{N}$, we have
		\begin{align*}
			S_n &= \frac{1}{(k-1)!}\sum_{m=1}^{+\infty} R_n^{(k-1)}(m) = \frac{(-1)^{k-1}}{q}\left(\sum_{j=0}^{rqn}C_{n,j}\sum_{m=1}^{+\infty}\frac{1}{\left(m+\frac{j}{q}\right)^k}\right)\\
			&=\frac{(-1)^{k-1}}{q}\left(\sum_{j=0}^{rn}C_{n,qj}\sum_{m=1}^{+\infty}\frac{1}{\left(m+j\right)^k}+\sum_{a=1}^{q-1}\sum_{j=0}^{rn-1}C_{n,qj+a}\sum_{m=1}^{+\infty}\frac{1}{\left(m+j+\frac{a}{q}\right)^k}\right)\\
			&=\frac{(-1)^{k-1}}{q}\left(C_{n,0}\zeta(k)+\sum_{j=1}^{rn}C_{n,qj}\left(\zeta(k)-\sum_{m=1}^j\frac{1}{m^k}\right)\right)\\
			&\quad+\frac{(-1)^{k-1}}{q}\sum_{a=1}^{q-1}\sum_{j=0}^{rn-1}C_{n,qj+a}\left(\zeta\left(k,\frac{a}{q}\right)-\sum_{m=0}^{j}\frac{1}{\left(m+\frac{a}{q}\right)^k}\right)\\
			&=\rho_{n,0}+\rho_{n,1}'\zeta(k)+\sum_{a=1}^{q-1}\rho_{n,a/q}\zeta\left(k,\frac{a}{q}\right),
		\end{align*}
		where
		\begin{align*}
			\rho_{n,0}&=\frac{(-1)^k}{q}\left(\sum_{j=1}^{rn}\sum_{m=1}^j\frac{C_{n,qj}}{m^k}+\sum_{a=1}^{q-1}\sum_{j=0}^{rn-1}\sum_{m=0}^j\frac{C_{n,qj+a}}{\left(m+\frac{a}{q}\right)^k}\right),\\
			\rho_{n,1}'&=\frac{(-1)^{k-1}}{q}\sum_{j=0}^{rn}C_{n,qj},\\
			\rho_{n,a/q}&=\frac{(-1)^{k-1}}{q}\sum_{j=0}^{rn-1}C_{n,qj+a}.
		\end{align*}
		By \eqref{def_C_j}, we have the following symmetry property:
		\begin{equation}
			\label{symmetry}
			C_{n,j} = (-1)^{k-1}  C_{n,rqn-j}, \quad j \in \{0,1,\ldots,rqn\}.
		\end{equation} 
		(We have used that $n$ is even and $\delta_k \equiv k \pmod{2}$.) 
		Therefore, for any integer $a$ with $1 \leqslant a < q$, we have
		\begin{align*}
			\rho_{n,1}'&=\frac{(-1)^{k-1}}{q}\sum_{j=0}^{rn}C_{n,qj}=\frac{(-1)^{k-1}}{q}\sum_{j=0}^{rn}C_{n,rnq-qj}\\
			&=(-1)^{k-1}\cdot\frac{(-1)^{k-1}}{q}\sum_{j=0}^{rn}C_{n,qj}=(-1)^{k-1}\rho_{n,1}',
		\end{align*}
		and
		\begin{align*}
			\rho_{n,1-a/q}&=\frac{(-1)^{k-1}}{q}\sum_{j=0}^{rn-1}C_{n,qj+q-a}=\frac{(-1)^{k-1}}{q}\sum_{j=0}^{rn-1}C_{n,rnq-jq-a}\\
			&=(-1)^{k-1}\cdot\frac{(-1)^{k-1}}{q}\sum_{j=0}^{rn-1}C_{n,jq+a}=(-1)^{k-1}\cdot\rho_{n,a/q}.
		\end{align*}
		In particular, if $q$ is even, then we have $\rho_{n,1/2}=(-1)^{k-1}\rho_{n,1/2}$. 
		We conclude that
		\begin{align*}
			S_n &= \rho_{n,0} +\rho_{n,1}'\zeta(k)+ \sum_{a=1}^{q-1} \rho_{n,a/q} \zeta\left( k,\frac{a}{q} \right) \\
			&=\rho_{n,0} +\rho_{n,1}'\delta_k\zeta(k)+\sum_{1\leqslant a<q/2}\rho_{a/q}\zeta^-\left( k,\frac{a}{q} \right)+ 
			\begin{cases}
				0 \quad&\text{if $q$ is odd}, \\
				\rho_{n,1/2}\delta_k\zeta\left(k,\frac{1}{2}\right) \quad&\text{if $q$ is even},
			\end{cases}\\
			&=\rho_{n,0} +\rho_{n,1}'\delta_k\zeta(k)+\sum_{1\leqslant a<q/2}\rho_{n,a/q}\zeta^-\left( k,\frac{a}{q} \right)+ \begin{cases}
				0 \quad&\text{if $q$ is odd}, \\
				\rho_{n,1/2}(2^k-1)\delta_k\zeta\left(k\right) \quad&\text{if $q$ is even},
			\end{cases}\\
			&=\rho_{n,0} +\rho_{n,1}\delta_k\zeta(k)+\sum_{1\leqslant a<q/2}\rho_{n,a/q}\zeta^-\left( k,\frac{a}{q} \right),
		\end{align*}
		where
		\[
		\rho_{n,1}=\rho_{n,1}'+(1-\delta_q)(2^k-1)\rho_{n,1/2}.
		\]
		The proof of Lemma \ref{lem_linear_forms} is complete.
	\end{proof}
	
	As usual, we denote by 
	\[
	d_m := \operatorname{lcm}\{1,2,\ldots,m\} 
	\]
	the least common multiple of $1,2,\ldots,m$ for any positive integer $m$.  
	
	\begin{lemma}
		\label{lem_beta}
		For any $n \in q!\mathbb{N}$, we have 
		\[ 
		q \cdot \rho_{n,1} \in \mathbb{Z},  \quad q \cdot \rho_{n,a/q} \in \mathbb{Z} ~~(1 \leqslant a < q/2),  \quad d_{rqn}^{k} \cdot \rho_{n,0} \in \mathbb{Z}. 
		\]
		Moreover, we have
		\[ 
		\max \left\{ |\rho_{n,0}|, |\rho_{n,1}|, |\rho_{n,a/q}| ~\Big|~ 1 \leqslant a < q/2 \right\} \leqslant \exp\left( \beta n + o(n) \right) \quad \text{as~} n \to +\infty,  
		\]
		where
		\begin{equation}
			\label{def_beta}
			\beta = rq\log 2 + k \left(  (2q+r)\log\left(q+\frac{r}{2}\right) - r\log \frac{r}{2} + 2q\sum_{p \mid q \atop p \text{~prime}} \frac{\log p}{p-1}  \right). 
		\end{equation}
	\end{lemma}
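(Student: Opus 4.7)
The plan is to reduce both assertions to careful properties of the coefficients $C_{n,j}$ defined in \eqref{def_C_j}: integrality of $C_{n,j}$ together with the extra divisibility $q \mid C_{n,qi}$, and a uniform asymptotic upper bound $|C_{n,j}| \leq \exp(\beta n + o(n))$. Everything in the lemma then follows from the explicit formulas \eqref{def_rho_n_0}--\eqref{def_rho_n_1} by termwise estimation, using that each $\rho_{n,\ast}$ is a sum of at most $O(n^2)$ terms whose denominators are cleared by $d_{rqn}^k$ (and by an obvious factor of $q$).

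For the integrality, I would first rewrite the Pochhammer symbols in \eqref{def_C_j} to expose denominators. Writing $j = qs + a$ with $0 \leqslant a < q$, a direct calculation gives
\[
(-j/q - qn)_{qn} = \frac{(-1)^{qn}}{q^{qn}}\prod_{m=s+1}^{s+qn}(qm+a), \qquad (-j/q + rn + 1)_{qn} = \frac{1}{q^{qn}}\prod_{m=rn+1-s}^{rn+qn-s}(qm - a).
\]
When $a \neq 0$, the combined denominator $q^{2kqn}$ exactly cancels the explicit $q^{2kqn}$ in \eqref{def_C_j}; when $a = 0$, no $q$-denominator appears and the full factor $q^{2kqn}$ survives. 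I then verify $p$-integrality prime by prime. For $p \nmid q$, the map $m \mapsto qm+a$ is a bijection modulo $p^i$, so any window of $qn$ consecutive integers $m$ contains at least $\lfloor qn/p^i \rfloor$ solutions of $p^i \mid qm + a$; summing over $i$ via Legendre's formula yields $v_p\!\left(\prod_{m=s+1}^{s+qn}(qm+a)\right) \geqslant v_p((qn)!)$, and likewise for the second product, so the $k$-th power of their product clears $(qn)!^{2k}$. For $p \mid q$, the condition $n \in q!\,\mathbb{N}$ forces $(p-1)\mid n$, making $p^{2kqn/(p-1)}$ a genuine integer factor; since $v_p((qn)!) \leqslant qn/(p-1)$, this single factor already dominates the denominator $(qn)!^{2k}$. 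Hence $C_{n,j} \in \mathbb{Z}$, and for $a = 0$ the surplus $q^{2kqn}$ additionally gives $q \mid C_{n,qi}$. Plugging into \eqref{def_rho_n_a/q} and \eqref{def_rho_n_1} immediately yields $q\rho_{n,a/q}, q\rho_{n,1} \in \mathbb{Z}$. For $\rho_{n,0}$, I would convert $1/(m+a/q)^k = q^k/(qm+a)^k$ with $qm + a \leqslant qrn - 1 < rqn$, so $d_{rqn}^k/(qm+a)^k \in \mathbb{Z}$ and the emerging $q^{k-1}$ absorbs the outside $1/q$; for the $a = 0$ double sum the divisibility $q \mid C_{n,qi}$ plays the same role, so altogether $d_{rqn}^k \rho_{n,0} \in \mathbb{Z}$.

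For the asymptotic bound, I would insert Stirling's formula into the expression for $|C_{n,j}|$, rescaled by $x := j/(qn) \in [0, r]$. The $qn\log n$ and $qn\log q$ contributions cancel exactly, yielding $\log|C_{n,j}|/n = \Phi(x) + O(n^{-1}\log n)$ with
\[
\Phi(x) = qr\log r - (q+k)\bigl[x\log x + (r-x)\log(r-x)\bigr] + k\bigl[(x+q)\log(x+q) + (r+q-x)\log(r+q-x)\bigr] + 2kq\!\!\sum_{p \mid q \atop p \text{~prime}}\!\!\frac{\log p}{p-1}.
\]
A short differentiation shows $\Phi'(r/2) = 0$ and, on $(0,r)$,
\[
\Phi''(x) = -(q+k)\!\left[\frac{1}{x} + \frac{1}{r-x}\right] + k\!\left[\frac{1}{x+q} + \frac{1}{r+q-x}\right] < 0,
\]
since $(q+k)(x+q) > kx$ termwise, so $\Phi$ is strictly concave with unique maximum at $x = r/2$. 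A direct simplification gives $\Phi(r/2) = \beta$, hence $|C_{n,j}| \leqslant \exp(\beta n + o(n))$ uniformly in $j$, and the bound on the $\rho_{n,\ast}$ follows by summing $O(n^2)$ such terms.

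The main obstacle will be the $p$-adic bookkeeping in the integrality step — specifically the uniform estimate $v_p\bigl(\prod_{m}(qm + a)\bigr) \geqslant v_p((qn)!)$ for $p \nmid q$, together with the delicate matching $v_p((qn)!^{2k}) \leqslant 2kqn/(p-1) = v_p\bigl(p^{2kqn/(p-1)}\bigr)$ for $p \mid q$, which is precisely what the tailor-made factor $\prod_{p \mid q} p^{2kqn/(p-1)}$ in Definition \ref{def_R_n(t)} is designed to handle. Once this is in place, the asymptotic part is routine Stirling together with elementary one-variable calculus.
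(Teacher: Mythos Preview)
Your proposal is correct and tracks the paper closely on the integrality side: both arguments factor $C_{n,j}$ and verify $p$-integrality prime by prime, separating $p\nmid q$ (arithmetic-progression count in the products $\prod(qm+a)$) from $p\mid q$ (where the built-in factor $\prod_{p\mid q}p^{qn/(p-1)}$ dominates $v_p((qn)!)$). Your extra observation $q\mid C_{n,qi}$ is valid but not what the paper uses; the paper instead absorbs the outer $1/q$ in the first double sum of \eqref{def_rho_n_0} by noting that $qm\leqslant rqn$ forces $qm\mid d_{rqn}$, so $d_{rqn}^k/(qm^k)\in\mathbb{Z}$ already.

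For the asymptotic bound the two routes diverge slightly. You expand everything by Stirling into the function $\Phi(x)$ of $x=j/(qn)$ and then maximize over $[0,r]$ via concavity, finding the peak at $x=r/2$. The paper bypasses this optimization with a one-line AM--GM step: it writes $C_{n,j}=(-1)^j\binom{rqn}{j}(rqn-2j)^{1-\delta_k}A_{n,j}^kB_{n,j}^k$ and bounds
\[
|A_{n,j}B_{n,j}|\;\leqslant\;\frac{\prod_{p\mid q}p^{2qn/(p-1)}}{(qn)!^2}\prod_{\nu=0}^{qn-1}\Bigl(q^2n+\tfrac{rqn}{2}-q\nu\Bigr)^2
\]
using $(j+q^2n-q\nu)(rqn-j+q^2n-q\nu)\leqslant(\text{midpoint})^2$, which already pins $j$ to $rqn/2$; a single application of Stirling to the resulting Gamma ratio then gives $\beta$. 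The endpoint is the same ($\Phi(r/2)=\beta$), and the two computations are really dual: AM--GM is sharp precisely at the symmetric point your concavity argument identifies. The paper's version is shorter and avoids having to discuss uniformity of the Stirling error near $x\in\{0,r\}$; your version is more systematic and would be the natural one to use if the maximum were not at a symmetry point.
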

	
	\begin{proof}
		First, by \eqref{def_C_j}, the coefficients $C_{n,j}$ ($j=0,1,\ldots,rqn$) can be expressed as
		\begin{equation}
			\label{C_j_another_expression}
			C_{n,j} = (-1)^j \binom{rqn}{j} (rqn-2j)^{1-\delta_k} \cdot A_{n,j}^k \cdot B_{n,j}^k, 
		\end{equation}
		where
		\begin{align*}
			A_{n,j} &= \prod_{p \mid q \atop p \text{~prime}} p^{qn/(p-1)} \cdot \frac{\prod_{\nu=0}^{qn-1} (-j-q^{2}n +q\nu) }{(qn)!}, \\
			B_{n,j} &= \prod_{p \mid q \atop p \text{~prime}} p^{qn/(p-1)} \cdot \frac{\prod_{\nu=0}^{qn-1} (-j+rqn +q^{2}n - q\nu) }{(qn)!}.
		\end{align*}
		By considering the $\ell$-adic order of $A_{n,j}$ and $B_{n,j}$ for every prime $\ell$, we obtain the elementary conclusion that
		\[ 
		A_{n,j} \in \mathbb{Z}, \quad B_{n,j} \in \mathbb{Z}. 
		\]
		Therefore, we have
		\begin{equation}
			\label{C_j_is_integral}
			C_{n,j} \in \mathbb{Z}, \quad j \in \{0,1,\ldots,rqn\}. 
		\end{equation}
		By \eqref{def_rho_n_0},  \eqref{def_rho_n_a/q}, \eqref{def_rho_n_1}, and \eqref{C_j_is_integral}, we obtain immediately that
		\[ 
		q \cdot \rho_{n,1} \in \mathbb{Z},  \quad q \cdot \rho_{n,a/q} \in \mathbb{Z} ~~(1 \leqslant a < q/2),  \quad d_{rqn}^{k} \cdot \rho_{n,0} \in \mathbb{Z}. 
		\]
		
		Now, noting that
		\begin{align*}
			\binom{rqn}{j} &\leqslant 2^{rqn}, \qquad |rqn-2j|^{1-\delta_k} \leqslant rqn, \\
			|A_{n,j}B_{n,j}| &= \frac{\prod_{p \mid q \atop p \text{~prime}} p^{ 2qn/(p-1)}}{(qn)!^2} \cdot \prod_{\nu=0}^{qn-1} (j+q^{2}n-q\nu)(-j+rqn+q^{2}n-q\nu) \\
			&\leqslant \frac{\prod_{p \mid q \atop p \text{~prime}} p^{ 2qn/(p-1)}}{(qn)!^2} \cdot \prod_{\nu=0}^{qn-1} \left(  q^2n + \frac{rqn}{2} - q\nu \right)^2 \\
			&= \frac{\prod_{p \mid q \atop p \text{~prime}} p^{ 2qn/(p-1)}}{(qn)!^2} \cdot q^{2qn} \cdot \frac{\Gamma((q+r/2)n+1)^2}{\Gamma((r/2)n+1)^2},
		\end{align*}
		we deduce from \eqref{C_j_another_expression} that
		\begin{equation*}
			\max_{0 \leqslant j \leqslant rqn} |C_{n,j}| \leqslant rqn \cdot 2^{rqn} \cdot \left( q^{2qn} \cdot \prod_{p \mid q \atop p \text{~prime}} p^{ 2qn/(p-1)} \cdot \frac{\Gamma((q+r/2)n+1)^2}{\Gamma(qn+1)^2\Gamma((r/2)n+1)^2} \right)^k.
		\end{equation*}
		Applying Stirling's formula to Gamma values, we obtain
		\begin{equation}
			\label{extimate_for_C_j}
			\max_{0 \leqslant j \leqslant rqn} |C_{n,j}| \leqslant \exp\left( \beta n + o(n) \right) \quad \text{as~} n \to +\infty,   
		\end{equation}
		where the constant $\beta$ is given by \eqref{def_beta}:
		\[ 
		\beta = rq\log 2 + k \left(  (2q+r)\log\left(q+\frac{r}{2}\right) - r\log \frac{r}{2} + 2q\sum_{p \mid q \atop p \text{~prime}} \frac{\log p}{p-1}  \right).  
		\]
		Finally, Equations \eqref{def_rho_n_0}, \eqref{def_rho_n_a/q}, and \eqref{def_rho_n_1} imply that
		\[ 
		\max \left\{ |\rho_{n,0}|, |\rho_{n,1}|, |\rho_{n,a/q}| ~\Big|~ 1 \leqslant a < q/2  \right\} \leqslant q^{k-1}(rqn+1)^2 \cdot  \max_{0 \leqslant j \leqslant rqn} |C_{n,j}|.  
		\]
		Therefore, the estimate \eqref{extimate_for_C_j} implies that
		\[ 
		\max \left\{ |\rho_{n,0}|, |\rho_{n,1}|, |\rho_{n,a/q}| ~\Big|~ 1 \leqslant a < q/2  \right\} \leqslant \exp\left( \beta n + o(n) \right) \quad \text{as~} n \to +\infty. 
		\]
		The proof of Lemma \ref{lem_beta} is complete.
	\end{proof}
	
	\section{A property of the Hurwitz zeta values}
	\label{Sect_Prop_of_Hurw}
	
	The goal of this section is to prove that the linear forms $S_n$ (see Lemma \ref{lem_linear_forms}) belong to the space $\mathbb{Q} + V_k^{-}(q)$. 
	By Definition \ref{def_V_k(q)} and the simple fact
	\[ 
	\zeta^{-}\left( k, 1-\frac{a}{q} \right) = (-1)^{k-1}  \zeta^{-}\left( k, \frac{a}{q} \right), \quad \zeta^{+}\left( k, 1-\frac{a}{q} \right) = (-1)^{k}  \zeta^{+}\left( k, \frac{a}{q} \right), 
	\]
	it is easy to see that 
	\begin{align*}
		V_k^{-}(q) &= \operatorname{Span}_{\mathbb{Q}}\left\{  \zeta^{-}\left(k,\frac{a}{q}\right)  ~~\Big|~~ 1 \leqslant a < q,~\gcd(a,q)=1 \right\} ~(k \geqslant 2,~ q \geqslant 3),\\
		V_k^{+}(q) &= \operatorname{Span}_{\mathbb{Q}}\left\{  \zeta^{+}\left(k,\frac{a}{q}\right)  ~~\Big|~~ 1 \leqslant a < q,~\gcd(a,q)=1 \right\} ~(k \geqslant 2,~ q \geqslant 3).
	\end{align*}
	For convenience, we define for $k \geqslant 2$ and $q=2$ that
	\begin{align*}
		V_k^{-}(2) &:= \operatorname{Span}_{\mathbb{Q}}\left\{  \zeta^{-}\left(k,\frac{1}{2}\right) \right\} = \delta_k\zeta(k) \mathbb{Q}, \\
		V_k^{+}(2) &:= \operatorname{Span}_{\mathbb{Q}}\left\{  \zeta^{+}\left(k,\frac{1}{2}\right) \right\} = (1-\delta_k)\zeta(k) \mathbb{Q}, \\
		V_k(2) &:= \operatorname{Span}_{\mathbb{Q}}\left\{  \zeta\left(k,\frac{1}{2}\right) \right\} = \zeta(k) \mathbb{Q}.
	\end{align*}
	
	\begin{lemma}
		\label{lem_a_property_of_Hurwitz_zeta_values}
		Let $k\geqslant2$ and $q'\geqslant 2$ be integers. 
		Let $q=pq'$, where $p$ is a prime number. 
		Then, we have
		\[
		V_k^{-}(q')\subset V_k^{-}(q) \quad\text{and}\quad V_k^{+}(q')\subset V_k^{+}(q).
		\]
	\end{lemma}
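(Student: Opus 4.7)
The plan is to build everything from the classical distribution relation for the Hurwitz zeta function. From the defining series and a routine reindexing I would first record the identity
\[
\zeta(k, x) = \frac{1}{p^k} \sum_{j=0}^{p-1} \zeta\!\left(k, \frac{x+j}{p}\right),
\]
apply it to both $x = a/q'$ and $x = 1 - a/q'$ (the latter after the substitution $j \mapsto p-1-j$ to pair the arguments as $1 - (a + j q')/q$), and combine to obtain the analogous relation for the even and odd parts:
\[
\zeta^{\pm}\!\left(k, \frac{a}{q'}\right) = \frac{1}{p^k} \sum_{j=0}^{p-1} \zeta^{\pm}\!\left(k, \frac{a+jq'}{q}\right),
\]
valid for every $a \in \{1,\ldots,q'-1\}$ with $\gcd(a, q') = 1$.

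With this relation in hand I would split into two cases according to whether $p \mid q'$. If $p \mid q'$, then $\gcd(a, q') = 1$ forces $\gcd(a, p) = 1$, so $\gcd(a + jq', q) = 1$ for every $j \in \{0,\ldots,p-1\}$. Every term on the right-hand side already belongs to $V_k^{\pm}(q)$, and the lemma follows at once in this case.

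If instead $p \nmid q'$, then $j \mapsto a + jq' \pmod{p}$ is a bijection on $\mathbb{Z}/p$, so there is exactly one index $j_0$ with $p \mid (a + j_0 q')$. Writing $a + j_0 q' = p b$ one checks that $1 \leqslant b < q'$ and $\gcd(b, q') = 1$, so the $j_0$-th term simplifies to $\zeta^{\pm}(k, b/q')$, while each of the other $p - 1$ terms satisfies $\gcd(a + jq', q) = 1$ and thus lies in $V_k^{\pm}(q)$. The resulting key congruence is
\[
p^{k}\, \zeta^{\pm}\!\left(k, \frac{a}{q'}\right) \equiv \zeta^{\pm}\!\left(k, \frac{b}{q'}\right) \pmod{V_k^{\pm}(q)},
\]
where $b \equiv p^{-1} a \pmod{q'}$. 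Letting $\phi : a \mapsto p^{-1} a$ be the induced permutation of $(\mathbb{Z}/q')^{\times}$ and $N$ its order, iterating this congruence along the orbit $a, \phi(a), \ldots, \phi^{N-1}(a)$ yields $(p^{kN} - 1)\, \zeta^{\pm}(k, a/q') \in V_k^{\pm}(q)$; since $p^{kN} - 1 \neq 0$, the inclusion follows. The same reasoning applies simultaneously to the $+$ and the $-$ parts.

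The main obstacle is the bookkeeping in the $p \nmid q'$ case: one must carefully identify the unique exceptional index $j_0$, confirm that the resulting $b$ is a reduced residue modulo $q'$ lying in the admissible range, and verify that the iteration also handles the edge case $q' = 2$ correctly (where $(\mathbb{Z}/2)^{\times}$ is trivial, $N = 1$, and the single congruence already gives $(p^{k} - 1)\zeta^{\pm}(k, 1/2) \in V_k^{\pm}(q)$). Beyond this case analysis no genuinely new difficulty is anticipated.
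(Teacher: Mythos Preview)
Your proposal is correct and follows essentially the same approach as the paper: the distribution relation, the same two-case split on whether $p \mid q'$, and in the second case the congruence $p^{k}\zeta^{\pm}(k,a/q') \equiv \zeta^{\pm}(k,b/q') \pmod{V_k^{\pm}(q)}$ with $b \equiv p^{-1}a \pmod{q'}$, iterated until the orbit closes. The only cosmetic difference is that the paper iterates $\varphi(q')$ times rather than using the exact order $N$ of $p$ in $(\mathbb{Z}/q')^{\times}$; since $N \mid \varphi(q')$, both choices return to $a$ and yield a nonzero integer multiple of $\zeta^{\pm}(k,a/q')$ in $V_k^{\pm}(q)$.
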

	
	\begin{proof}
		It suffices to prove that $\zeta^{-}(k,a/q') \in V_k^{-}(q)$ and $\zeta^{+}(k,a/q') \in V_k^{+}(q)$ for any integer $a$ such that $1\leqslant a<q'$ and $\gcd(a,q')=1$. 
		We fix such an integer $a$.
		
		Since
		\[
		a+q'\mathbb{Z}_{\geqslant 0}=\bigsqcup_{j=0}^{p-1}(a+jq'+q\mathbb{Z}_{\geqslant 0})\ ,\quad q'-a+q'\mathbb{Z}_{\geqslant 0}=\bigsqcup_{j=0}^{p-1}(q'-a+jq'+q\mathbb{Z}_{\geqslant 0}),
		\]
		we have the following distribution formulae for Hurwitz zeta values: 
		\begin{align*}
			p^k\zeta\left(k,\frac{a}{q'}\right)&=\sum_{j=0}^{p-1}\zeta\left(k,\frac{a+jq'}{q}\right), \\
			p^k\zeta\left(k,1-\frac{a}{q'}\right)&=\sum_{j=0}^{p-1}\zeta\left(k,1-\frac{a+jq'}{q}\right). 
		\end{align*}
		Therefore, we have
		\begin{align}
			p^k\zeta^{-}\left(k,\frac{a}{q'}\right)&=\sum_{j=0}^{p-1}\zeta^{-}\left(k,\frac{a+jq'}{q}\right), \label{distribution_formula_odd}\\
			p^k\zeta^{+}\left(k,\frac{a}{q'}\right)&=\sum_{j=0}^{p-1}\zeta^{+}\left(k,\frac{a+jq'}{q}\right). \label{distribution_formula_even}
		\end{align}
		Now, we distinguish between two cases.
		
		\textbf{Case 1}: $p \mid q'$. 
		In this case, we have
		\[ 
		\gcd(a+jq',q) = 1 \quad \text{for all~} j \in \{0,1,\ldots,p-1\},
		\]
		because $\gcd(a+jq',p) \mid \gcd(a+jq',q')=1$. 
		Hence, each summand on the right-hand side of \eqref{distribution_formula_odd} (resp., \eqref{distribution_formula_even}) belongs to $V_k^{-}(q)$ (resp., $V_k^{+}(q)$). 
		We obtain
		\[ 
		\zeta^{-}\left(k,\frac{a}{q'}\right) \in V_k^{-}(q) \quad\text{and}\quad  \zeta^{+}\left(k,\frac{a}{q'}\right) \in V_k^{+}(q). 
		\]
		
		\textbf{Case 2}: $p \nmid q'$. 
		In this case, there exists a unique integer $j_0 \in \{0,1,\ldots,p-1\}$ such that $p \mid (a+j_0 q')$. 
		For any $j \in \{ 0,1,\ldots,p-1 \} \setminus \{ j_0 \}$, we have $\gcd(a+j q',q)=1$. 
		Hence, we deduce from \eqref{distribution_formula_odd} that
		\[ 
		p^k\zeta^{-}\left(k,\frac{a}{q'}\right) - \zeta^{-}\left(k,\frac{a+j_0q'}{q}\right) =  \sum_{0\leqslant j\leqslant p-1 \atop j\neq j_0}\zeta^{-}\left(k,\frac{a+jq'}{q}\right) \in V_{k}^{-}(q). 
		\]
		In other words, we have
		\[ 
		p^k\zeta^{-}\left(k,\frac{a}{q'}\right) - \zeta^{-}\left(k,\frac{(a+j_0q')/p}{q'}\right) \in V_{k}^{-}(q). 
		\]
		Write $a_1 = (a+j_0q')/p$. 
		Then $a_1$ is an integer such that $1 \leqslant a_1 < q'$ and $\gcd(a_1,q')=1$. We have
		\[ 
		\zeta^{-}\left(k,\frac{a_1}{q'}\right) \equiv p^{k} \zeta^{-}\left(k,\frac{a}{q'}\right) \pmod{V_{k}^{-}(q)}. 
		\]
		Moreover, we have
		\[ 
		a_1 \equiv p^{-1} a \pmod{q'}. 
		\]
		Now, there exists a unique integer $j_1 \in \{ 0,1,\ldots,p-1 \}$ such that $p \mid (a_1 + j_1q')$. 
		Repeating the arguments above, we find that $a_2 = (a_1+j_1 q')/p$ is an integer such that $1 \leqslant a_2 < q'$, $\gcd(a_2,q')=1$, 
		\[ 
		\zeta^{-}\left(k,\frac{a_2}{q'}\right) \equiv p^{k} \zeta^{-}\left(k,\frac{a_1}{q'}\right) \equiv p^{2k} \zeta^{-}\left(k,\frac{a}{q'}\right) \pmod{V_{k}^{-}(q)}, 
		\]
		and 
		\[ 
		a_2 \equiv p^{-1} a_1 \equiv p^{-2} a \pmod{q'}. 
		\]
		Continuing in this way, we obtain a sequence of integers $\{ a_n \}_{n \geqslant 1}$ such that $1 \leqslant a_n < q'$, $\gcd(a_n,q')=1$,
		\[ 
		\zeta^{-}\left(k,\frac{a_n}{q'}\right) \equiv p^{nk} \zeta^{-}\left(k,\frac{a}{q'}\right) \pmod{V_{k}^{-}(q)}, 
		\]
		and
		\[ 
		a_n \equiv p^{-n} a \pmod{q'}, 
		\]
		for any $n \geqslant 1$. 
		In particular, we have $a_{\varphi(q')} = a$ and
		\[ 
		\zeta^{-}\left(k,\frac{a}{q'}\right) \equiv p^{\varphi(q')k} \zeta^{-}\left(k,\frac{a}{q'}\right) \pmod{V_{k}^{-}(q)}, 
		\]
		which implies that 
		\[ 
		\zeta^{-}\left(k,\frac{a}{q'}\right) \in V_{k}^{-}(q). 
		\]
		Similarly, we have $ \zeta^{+}\left(k,a/q'\right) \in V_{k}^{+}(q)$ for Case 2. 
		The proof of Lemma \ref{lem_a_property_of_Hurwitz_zeta_values} is complete.
	\end{proof}
	
	\begin{corollary}
		\label{lem_S_n_is_in_Q+V^odd}
		Let $k\geqslant2$ and $q\geqslant3$ be integers. 
		\begin{enumerate}
			\item[\textup{(1)}] For any divisor $q' \geqslant 2$ of $q$, we have
			$$
			V^{-}_k(q')\subset V^{-}_k(q), \quad  V^{+}_k(q')\subset V^{+}_k(q), \quad V_k(q') \subset V_k(q). 
			$$
			\item[\textup{(2)}] For any integer $a \in \{1,2,\ldots,q-1\}$ \textup{(}not necessarily coprime to $q$\textup{)}, we have
			$$
			\zeta^{-}\left(k,\frac{a}{q}\right)\in V_k^{-}(q), \quad \zeta^{+}\left(k,\frac{a}{q}\right)\in V_k^{+}(q), \quad \zeta\left(k,\frac{a}{q}\right)\in V_k(q).
			$$
			\item[\textup{(3)}] We have
			\[ 
			\operatorname{Span}_{\mathbb{Q}}\left( \left\{ 1,\delta_k\zeta(k) \right\} \bigcup \left\{  \zeta^-\left( k, \frac{a}{q} \right) ~\Big|~ a \in \mathbb{Z},\ 1 \leqslant a < \frac{q}{2} \right\} \right) = \mathbb{Q} + V_k^-(q). 
			\]
		\end{enumerate}
	\end{corollary}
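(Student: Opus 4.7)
The plan is to prove the three parts in sequence: part (1) by iterating Lemma~\ref{lem_a_property_of_Hurwitz_zeta_values}, part (2) by reducing to part (1) after writing each fraction in lowest terms, and part (3) by combining (2) with a Hurwitz distribution identity.

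For part (1), I would induct on the number of prime factors $\Omega(q/q')$ of $q/q'$, counted with multiplicity. The base case $q=q'$ is trivial. For the inductive step, pick any prime $p \mid q/q'$, set $q'' = q/p$, so that $q' \mid q''$ and $q = p q''$; the inductive hypothesis gives $V_k^{\pm}(q') \subset V_k^{\pm}(q'')$, while Lemma~\ref{lem_a_property_of_Hurwitz_zeta_values} applied to $q = p q''$ gives $V_k^{\pm}(q'') \subset V_k^{\pm}(q)$. The inclusion $V_k(q') \subset V_k(q)$ then follows from $V_k = V_k^+ + V_k^-$.

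For part (2), let $d = \gcd(a,q)$ and write $a/q = a'/q'$ in lowest terms, where $a' = a/d$ and $q' = q/d$; then $q' \mid q$, $q' \geqslant 2$, and $\gcd(a',q') = 1$, so $\zeta^{\pm}(k,a/q) = \zeta^{\pm}(k,a'/q')$. I claim this value lies in $V_k^{\pm}(q')$: if $1 \leqslant a' < q'/2$ it is a defining generator, if $q'/2 < a' < q'$ one folds via $\zeta^{-}(k,1-x) = (-1)^{k-1}\zeta^{-}(k,x)$ (and the analogous identity for $\zeta^{+}$) to reduce to a generator, and the corner case $a' = q'/2$ forces $q' = 2$, covered by the extended definitions of $V_k^{\pm}(2)$ given above. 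Part (1) then promotes this to $V_k^{\pm}(q)$, completing (2).

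For part (3), the inclusion $\operatorname{Span}_{\mathbb{Q}}(\cdots) \supset \mathbb{Q} + V_k^{-}(q)$ is immediate because each generator of $V_k^{-}(q)$ (and $1$ itself) appears on the left. For the reverse inclusion, $1 \in \mathbb{Q}$ and the terms $\zeta^{-}(k,a/q)$ with $1 \leqslant a < q/2$ belong to $V_k^{-}(q)$ by part (2), so the only nontrivial point is to show $\delta_k \zeta(k) \in V_k^{-}(q)$. This is vacuous for even $k$ since $\delta_k = 0$. For odd $k$, I would invoke the distribution identity $\sum_{a=1}^{q-1} \zeta(k,a/q) = (q^k - 1)\zeta(k)$ together with $\zeta^{-}(k,a/q) = \zeta(k,a/q) + \zeta(k, 1 - a/q)$ (valid for odd $k$) to obtain
\[ \sum_{a=1}^{q-1} \zeta^{-}(k, a/q) = 2(q^k - 1)\zeta(k); \]
every summand on the left lies in $V_k^{-}(q)$ by part (2), whence $\zeta(k) \in V_k^{-}(q)$, as desired. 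There is no real obstacle: the entire corollary reduces to iterating Lemma~\ref{lem_a_property_of_Hurwitz_zeta_values} and an elementary distribution formula, the only subtleties being the two edge cases $q'=2$ and $a' = q'/2$ in part (2) and tracking the parity of $k$ in part (3).
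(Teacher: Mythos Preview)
Your proposal is correct and follows essentially the same route as the paper: iterate Lemma~\ref{lem_a_property_of_Hurwitz_zeta_values} for (1), reduce fractions for (2), and use the distribution identity $\sum_{a=1}^{q-1}\zeta(k,a/q)=(q^k-1)\zeta(k)$ for (3). The only difference is cosmetic: the paper invokes the alternate description $V_k^{\pm}(q')=\operatorname{Span}_{\mathbb{Q}}\{\zeta^{\pm}(k,a'/q')\mid 1\leqslant a'<q',\ \gcd(a',q')=1\}$ (and the explicit definition of $V_k^{\pm}(2)$) to place $\zeta^{\pm}(k,a'/q')$ in $V_k^{\pm}(q')$ in one step, whereas you spell out the folding and the $q'=2$ edge case by hand.
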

	
	\begin{proof}
		Repetitively using Lemma \ref{lem_a_property_of_Hurwitz_zeta_values}, we obtain
		\[ 
		V^{-}_k(q')\subset V^{-}_k(q) \quad\text{and}\quad V^{+}_k(q')\subset V^{+}_k(q) 
		\]
		for any divisor $q' \geqslant 2$ of $q$. 
		Since $V_k(q)=V_k^{-}(q) + V_k^{+}(q)$, we also have $V_k(q') \subset V_k(q)$. 
		The first assertion (1) is proved.
		
		For any integer $a \in \{ 1,2,\ldots,q-1 \}$, let $a'=a/\gcd(a,q)$ and $q'=q/\gcd(q)$. 
		Then $a/q = a'/q'$ and $\gcd(a',q')=1$. 
		Clearly $q' \geqslant 2$. 
		Thus, $\zeta_k^{-}(a/q) \in V_k^{-}(q')$, $\zeta_k^{+}(a/q) \in V_k^{+}(q')$, and $\zeta_k(a/q) \in V_k(q')$. 
		Therefore, assertion (2) follows from assertion (1).
		
		For the last assertion (3), it remains to prove that $\delta_k \zeta(k) \in V_k^{-}(q)$. 
		If $k$ is even, then $\delta_k=0$ and there is nothing to prove. 
		If $k$ is odd, then by assertion (2) we have
		\[
		\zeta(k) = \frac{1}{q^k-1} \sum_{a=1}^{q-1} \zeta\left(k, \frac{a}{q} \right) = \frac{1}{2(q^k-1)} \sum_{a=1}^{q-1} \zeta_k^{-}\left(k,\frac{a}{q}\right) \in V_k^{-}(q),
		\]
		which completes the proof of Corollary \ref{lem_S_n_is_in_Q+V^odd}.
	\end{proof}
	
	\section{Integral representations of $S_n$}
	\label{Sect_Int_expre_of_S_n}
	
	In this section, we will present the linear forms $S_n$ (see Lemma \ref{lem_linear_forms}) as complex integrals. 
	This serves as a preparatory step for applying the saddle-point method to estimate $S_n$. Throughout this section, we assume that $k,q,r$ are positive integers with $k \geqslant 2$, $q \geqslant 3$, and $r>2k$.
	
	Following Zudilin \cite{Zud2002}, we define `differential iterations' of the cotangent function $\cot(z)$.
	
	\begin{definition}
		For any integer $k \geqslant 2$, we define 
		\[ 
		\cot_k(z) := \frac{(-1)^{k-1}}{(k-1)!}\cdot \frac{\mathrm{d}^{k-1}\cot(z)}{\mathrm{d}z^{k-1}}. 
		\]
	\end{definition}			
	
	The following lemma summarizes the basic properties of the function $\cot_k(z)$.
	
	\begin{lemma}
		\label{property_of_cot_k}
		Let $k \geqslant 2$ be an integer.
		\begin{enumerate}
			\item[\textup{(1)}] The function $\cot_{k} (\pi z)$ is meromorphic on $\mathbb{C}$. 
			The set of poles of $\cot_{k} (\pi z)$ is exactly $\mathbb{Z}$. 
			For any $m \in \mathbb{Z}$, we have
			\begin{equation*}
				\pi^k\cot_k(\pi z)=\frac{1}{(z-m)^k}+O(1)
			\end{equation*}
			in a small neighborhood of $z=m$.
			
			\item[\textup{(2)}] For any $z \in \mathbb{C} \setminus \mathbb{Z}$, we have
			\[ 
			|\pi^k \cot_k(\pi z)| \leqslant \frac{2}{\operatorname{dist}(z,\mathbb{Z})^k} + 4, 
			\]
			where $\operatorname{dist}(z,\mathbb{Z}) = \inf\limits_{m \in \mathbb{Z}} |z-m|$ is the distance between $z$ and $\mathbb{Z}$.
			
			\item[\textup{(3)}] There exist rational constants $c_l$ \textup{(}$0 \leqslant l \leqslant k-2$, $l \equiv k \pmod{2}$\textup{)} depending only on $k$ such that
			\[ 
			c_{k-2} \neq 0 
			\]
			and
			\[ 
			\sin^k(\pi z)\cdot\cot_k(\pi z) = \sum_{0 \leqslant l \leqslant k-2 \atop l \equiv k \pmod{2}} c_l \cos(l \pi z), \quad z \in \mathbb{C} \setminus \mathbb{Z}.  
			\]
		\end{enumerate} 
	\end{lemma}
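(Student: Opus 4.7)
The plan is to establish each of the three parts in turn, all resting on the Euler-type partial fraction expansion of the cotangent. For part (1), I would start from the classical identity
\[
\pi\cot(\pi z) \;=\; \lim_{N\to+\infty}\sum_{m=-N}^{N}\frac{1}{z-m},
\]
which converges locally uniformly on $\mathbb{C}\setminus\mathbb{Z}$, and differentiate termwise to obtain
\[
\pi^{k}\cot_k(\pi z) \;=\; \sum_{m\in\mathbb{Z}}\frac{1}{(z-m)^{k}}.
\]
For $k\geqslant 2$ this series is absolutely and locally uniformly convergent on $\mathbb{C}\setminus\mathbb{Z}$, so it represents a meromorphic function on $\mathbb{C}$ whose set of poles is precisely $\mathbb{Z}$; isolating the $m$-th summand yields the asserted Laurent expansion.

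For part (2), I would use the same representation. Let $m_0$ be a closest integer to $\operatorname{Re}(z)$ and $m_1$ a next-closest one (on the other side). The two nearest terms together contribute at most $2/\operatorname{dist}(z,\mathbb{Z})^{k}$. For every remaining $m$ one has $|z-m|\geqslant |m-m_0|-\tfrac{1}{2}\geqslant 1$ when $|m-m_0|\geqslant 2$, so the tail is bounded by $2\sum_{j\geqslant 1}j^{-k}=2\zeta(k)\leqslant 2\zeta(2)<4$. Combining the two estimates gives the stated inequality.

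For part (3), I would show that $f(z):=\sin^{k}(\pi z)\cot_k(\pi z)$ is a trigonometric polynomial of the required shape. Since $\sin^{k}(\pi z)$ has zeros of order $k$ at the integers while $\cot_k(\pi z)$ has poles of order $k$ there, $f$ extends to an entire function. Using the Fourier expansion $\cot(\pi z)=-i-2i\sum_{n\geqslant 1}e^{2\pi inz}$ valid for $\operatorname{Im}(z)>0$, and differentiating, one sees that $\cot_k(\pi z)=O(e^{-2\pi\operatorname{Im}(z)})$ as $\operatorname{Im}(z)\to+\infty$, and symmetrically as $\operatorname{Im}(z)\to-\infty$. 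Since $|\sin^{k}(\pi z)|\leqslant 2^{-k}e^{\pi k|\operatorname{Im}(z)|}$, this yields $f(z)=O(e^{(k-2)\pi|\operatorname{Im}(z)|})$. Passing to the variable $w=e^{i\pi z}$, a Laurent-series argument then forces $f(z)=\sum_{|l|\leqslant k-2}a_{l}e^{il\pi z}$. The relation $f(z+1)=(-1)^{k}f(z)$ (since $\sin(\pi(z+1))=-\sin(\pi z)$ and $\cot_k$ has period $1$) restricts the modes to $l\equiv k\pmod 2$, and the evenness $f(-z)=f(z)$ (both factors share the parity $(-1)^{k}$ under $z\mapsto -z$) pairs $l$ with $-l$, yielding a cosine expansion. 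Rationality of the coefficients $c_{l}$ follows because they can be read off from finitely many rational Taylor coefficients (of $\sin^{k}$ together with derivatives of $\cot$) at, e.g., $z=1/2$. Finally, to obtain $c_{k-2}\neq 0$ I would match the leading Fourier modes as $\operatorname{Im}(z)\to+\infty$: the dominant term of $\sin^{k}(\pi z)$ is a non-zero multiple of $e^{-ik\pi z}$ and the dominant term of $\cot_k(\pi z)$ is a non-zero multiple of $e^{2\pi iz}$, combining to give a non-zero coefficient on $e^{-i(k-2)\pi z}$, hence on $\cos((k-2)\pi z)$.

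The main obstacle is part (3): one has to juggle entireness, periodicity, parity, and exponential growth all at once in order to pin down the precise shape of the trigonometric polynomial, and then carry out the leading-order asymptotic computation to confirm non-vanishing at $l=k-2$. Parts (1) and (2), by contrast, are essentially bookkeeping around the standard partial fraction identity.
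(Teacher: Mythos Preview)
Your arguments for parts (1) and (2) are essentially identical to the paper's: both rest on the termwise-differentiated partial fraction expansion $\pi^{k}\cot_{k}(\pi z)=\sum_{m\in\mathbb{Z}}(z-m)^{-k}$, split off the two nearest terms, and bound the tail by $2\zeta(k)<4$. (Your bookkeeping in (2) is slightly loose---after removing $m_{0}$ and $m_{1}$ there is still one integer at distance $1$ from $m_{0}$---but the paper's indexing via $\lfloor\operatorname{Re}z\rfloor$ and $\lceil\operatorname{Re}z\rceil$ fixes this immediately and yields the same estimate.)

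For part (3) you take a genuinely different route. The paper invokes an algebraic lemma of Zudilin asserting that $\sin^{k}(z)\cot_{k}(z)=V_{k}(\cos z)$ for a polynomial $V_{k}\in\mathbb{Q}[X]$ of exact degree $k-2$ with $V_{k}(-X)=(-1)^{k}V_{k}(X)$; rationality, parity, and the non-vanishing of $c_{k-2}$ are then immediate, and one only has to rewrite powers of $\cos$ as cosines of multiples. Your approach is analytic and self-contained: entirety plus period $2$ plus the growth bound $O(e^{(k-2)\pi|\operatorname{Im}z|})$ forces a finite Laurent expansion in $e^{i\pi z}$, the relations $f(z+1)=(-1)^{k}f(z)$ and $f(-z)=f(z)$ cut down to cosines with $l\equiv k\pmod 2$, and matching the dominant Fourier mode as $\operatorname{Im}z\to+\infty$ gives $c_{k-2}\neq 0$. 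This is clean and avoids the external citation. The one soft spot is rationality: your sentence about reading off $c_{l}$ from Taylor data at $z=1/2$ works, but only after observing that all the relevant Taylor coefficients lie in $\pi^{j}\mathbb{Q}$ and that the resulting linear system (with rational matrix) is invertible; the paper's $V_{k}$-recursion gives rationality for free. Either way the argument goes through.
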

	
	\begin{proof}
		For part (1), see \cite[Lemma 2.3]{Zud2002}. 
		Next, we prove part (2). 
		It is well known that
		\[ 
		\pi \cot(\pi z) = \frac{1}{z} + \sum_{m \in \mathbb{Z}\setminus\{0\}} \left( \frac{1}{z-m} + \frac{1}{m} \right), 
		\]
		where the series on the right-hand side converges absolutely and uniformly on every compact subset of $\mathbb{C} \setminus \mathbb{Z}$ (see, for instance, \cite[Example 2.4, p. 379]{Lan1999}). 
		Therefore, we have
		\[ 
		\pi^k \cot_k(\pi z) = \sum_{m \in \mathbb{Z}} \frac{1}{(z-m)^k}, \quad z \in \mathbb{C}\setminus\mathbb{Z}. 
		\]
		Let $x = \operatorname{Re} z$. 
		For any integer $m > \lceil x \rceil $, we have $|z-m| \geqslant |\operatorname{Re}(z-m)| \geqslant m - \lceil x \rceil$. 
		For any integer $m < \lfloor x \rfloor$, we have $|z-m| \geqslant |\operatorname{Re}(z-m)| \geqslant  \lfloor x \rfloor - m$. 
		Hence,
		\[
		|\pi^k \cot_k (\pi z)| \leqslant 2 \cdot \frac{1}{\operatorname{dist}(z,\mathbb{Z})^k} + 2\cdot \zeta(k) < \frac{2}{\operatorname{dist}(z,\mathbb{Z})^k} + 4, 
		\]
		which proves part (2). 
		Finally, we prove part (3).
		By \cite[Lemma 2.2]{Zud2002}, there exists a polynomial $V_k(X) \in \mathbb{Q}[X]$ depending only on $k$ such that
		\[ 
		\sin^k(z)\cdot\cot_k(z)=V_k(\cos(z)),\quad V_k(-X)=(-1)^kV_k(X),\quad \deg V_k=k-2. 
		\]
		In other words, there exist rational constants $\widetilde{c}_l$ ($0 \leqslant l \leqslant k-2$, $l \equiv k \pmod{2}$) depending only on $k$ such that
		\[ 
		\widetilde{c}_{k-2} \neq 0 
		\]
		and
		\[ 
		\sin^k(\pi z)\cdot\cot_k(\pi z) = \sum_{0 \leqslant l \leqslant k-2 \atop l \equiv k \pmod{2}} \widetilde{c}_l \cos^{l} (\pi z). 
		\]
		By expanding
		\[ 
		\cos^{l} (\pi z) = \left( \frac{e^{i\pi z} + e^{-i\pi z}}{2} \right)^l, 
		\]
		we see that part (3) holds.
	\end{proof}
	
	Next, we express the linear forms $S_n$ (see Definition \ref{def_S_n} and Lemma \ref{lem_linear_forms}) as complex integrals.
	
	\begin{lemma}
		\label{S_n=int_cot_R}
		For any $n \in q!\mathbb{N}$ and any $M \in (0,qn)$, we have
		\[ 
		S_n = \frac{\pi^{k-1}i}{2} \int_{M-i\infty}^{M+i\infty}  \cot_k(\pi z) R_n(z) \,\de z. 
		\]
	\end{lemma}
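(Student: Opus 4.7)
The plan is to use the residue theorem applied to the meromorphic function $f(z) = \cot_k(\pi z) R_n(z)$, computing the integral along the vertical line $\operatorname{Re} z = M$ by closing the contour with a large semicircle in the right half-plane $\{\operatorname{Re} z > M\}$. The poles of $f$ in this half-plane are exactly the integers $m \geqslant \lceil M \rceil$, coming from $\cot_k(\pi z)$, since by the definition of $R_n$ the rational function $R_n$ has all its poles at non-positive rationals of the form $-j/q$.

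First I would set up the truncated contour as the boundary of the half-disk $\{|z| < N+\tfrac12,\ \operatorname{Re} z > M\}$ for positive integers $N$ (the half-integer radius avoids the integer poles of $\cot_k(\pi z)$). Orienting the vertical segment upward makes this closed contour traversed clockwise, so the residue theorem yields
\[
\int_{M-i(N+\frac{1}{2})'}^{M+i(N+\frac{1}{2})'} \cot_k(\pi z) R_n(z) \,\de z \;+\; \int_{\text{semicircle}} \cot_k(\pi z) R_n(z) \,\de z \;=\; -2\pi i \sum_{M<m\leqslant N} \operatorname{Res}_{z=m} f(z),
\]
where the prime indicates the appropriate vertical endpoints on the disk. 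Next I would use Lemma~\ref{property_of_cot_k}(1): near an integer $m$ one has $\pi^k\cot_k(\pi z) = (z-m)^{-k}+O(1)$, so for $m$ at which $R_n$ is holomorphic the residue equals $R_n^{(k-1)}(m)/((k-1)!\pi^k)$. Crucially, the factor $(t-qn)_{qn}^k$ in Definition~\ref{def_R_n(t)} shows that $R_n$ has a zero of order $k$ at each $m\in\{1,2,\ldots,qn\}$, so these residues vanish; the surviving contribution therefore telescopes into $\sum_{m>qn}R_n^{(k-1)}(m)/((k-1)!\pi^k) = S_n/\pi^k$ once $N\to\infty$.

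The step I expect to require the most care is justifying that the semicircular piece tends to zero as $N\to\infty$. On the arc $|z|=N+\tfrac12$ the distance $\operatorname{dist}(z,\mathbb{Z})$ stays bounded below by $\tfrac12$, so Lemma~\ref{property_of_cot_k}(2) gives a uniform bound $|\cot_k(\pi z)|\leqslant C_k$ there. Combined with the degree bound \eqref{deg_R_n<0}, which yields $|R_n(z)| = O(|z|^{-qn})$ as $|z|\to\infty$, the integrand is $O(N^{-qn})$, while the arc length is $O(N)$; hence the semicircle contribution is $O(N^{1-qn})\to 0$ since $qn\geqslant 2$. (One must also check uniformity in the angular variable, but this follows directly from the two bounds above.)

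Putting the three ingredients together gives
\[
\int_{M-i\infty}^{M+i\infty} \cot_k(\pi z) R_n(z)\,\de z \;=\; -\frac{2i}{\pi^{k-1}}\,S_n,
\]
and solving for $S_n$ yields the claimed identity $S_n = \tfrac{\pi^{k-1}i}{2}\int_{M-i\infty}^{M+i\infty}\cot_k(\pi z)R_n(z)\,\de z$. The hypothesis $M\in(0,qn)$ is used exactly to ensure that every positive integer with a possibly nonzero residue (namely $m>qn$) lies in the right half-plane $\operatorname{Re} z>M$, while $M\notin\mathbb{Z}$ can be assumed for free since the integrand is holomorphic on the line $\operatorname{Re} z = M$ under the assumption.
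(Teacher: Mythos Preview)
Your proposal is correct and follows essentially the same approach as the paper: close the vertical contour to the right, compute residues at the positive integers via Lemma~\ref{property_of_cot_k}(1), use the zero of order $k$ of $R_n$ at $1,\ldots,qn$ to discard those terms, and bound the closing piece using Lemma~\ref{property_of_cot_k}(2) together with the degree estimate~\eqref{deg_R_n<0}. The only cosmetic difference is the shape of the closing contour---the paper uses the rectangle with vertices $M\pm iT$ and $\lfloor T\rfloor+\tfrac12\pm iT$, while you use the circular arc $|z|=N+\tfrac12$ (where the reverse triangle inequality gives $\operatorname{dist}(z,\mathbb{Z})\geqslant\tfrac12$); both choices lead to the same estimates.
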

	
	\begin{proof}
		By Definition \ref{def_R_n(t)} and Lemma \ref{property_of_cot_k} (1), the function $\cot_k(\pi z) R_n(z)$ is meromorphic on $\mathbb{C}$.
		Fix any $n \in q!\mathbb{N}$ and $M \in (0,qn)$. 
		Let $T>qn$ be a sufficiently large real number. 
		Consider the anti-clockwise rectangular contour $\mathcal{R}_T$ with vertices at $M \pm iT$ and $\lfloor T \rfloor +1/2 \pm iT$. 
		By Cauchy's residue formula, we have
		\[
		\frac{1}{2\pi i} \int_{\mathcal{R}_T}  \cot_k(\pi z) R_n(z) \,\de z = \sum_{m=qn+1}^{\lfloor T \rfloor} \operatorname{Res}_{z=m}\left( \cot_k(\pi z) R_n(z) \right).
		\]
		In a small neighborhood of $m \in \mathbb{Z}$, Lemma \ref{property_of_cot_k} (1) implies that
		\begin{align*}
			\cot_k&(\pi z)R_n(z) = \left( \frac{1}{\pi^k(z-m)^k} + O(1) \right) \\
			&\times \left( R_n(m) + \frac{R_n^{\prime}(m)}{1!}(z-m) + \cdots + \frac{R_n^{(k-1)}(m)}{(k-1)!}(z-m)^{k-1} + O(|z-m|^k) \right). 
		\end{align*}
		Therefore, we have $\operatorname{Res}_{z=m}\left( \cot_k(\pi z) R_n(z) \right)= \pi^{-k} R_n^{(k-1)}(m)/(k-1)!$ and
		\begin{equation}
			\label{541}
			\frac{1}{2\pi i} \int_{\mathcal{R}_T}  \cot_k(\pi z) R_n(z) \,\de z = \frac{1}{\pi^{k}(k-1)!}  \sum_{m=qn+1}^{\lfloor T \rfloor} R_n^{(k-1)}(m).
		\end{equation}
		(We have used the fact $R_n^{(k-1)}(m) = 0$ for $m \in \{1,2,\ldots,qn\}$.)
		
		For any complex number $z$ on the three sides $[M-iT, \lfloor T \rfloor +1/2 - iT]$, $[\lfloor T \rfloor +1/2 - iT, \lfloor T \rfloor +1/2 + iT]$, and $[\lfloor T \rfloor +1/2 + iT, M+iT]$ of the rectangle, we have $\operatorname{dist}(z,\mathbb{Z}) \geqslant 1/2$. 
		By Lemma \ref{property_of_cot_k} (2) and Equation \eqref{deg_R_n<0}, we have
		\[
		|\cot_k(\pi z)| \leqslant \frac{2^{k+1}+4}{\pi^k} \quad\text{and}\quad |R_n(z)| = O(T^{-2}),
		\]
		where the implicit constant depends only on $k,q,r,n$. Therefore, 
		\[
		\left( \int_{M-iT}^{\lfloor T \rfloor +1/2 - iT} + \int_{\lfloor T \rfloor +1/2 - iT}^{\lfloor T \rfloor +1/2 + iT} + \int_{\lfloor T \rfloor +1/2 + iT}^{M+iT}\right)| \cot_k(\pi z)R_n(z)|\,|\de z| = O(T^{-1}).
		\]
		Substituting the above estimate into \eqref{541} and letting $T \to +\infty$, we obtain the desired integral representation for $S_n$.
	\end{proof}
	
	\begin{definition}
		\label{def_f_g}
		We define two holomorphic functions on $\mathbb{C} \setminus \big( (-\infty,0] \cup [q,+\infty) \big)$ as follows.
		\begin{align}
			f(z)&:=k(z+r+q)\log(z+r+q)+k(q-z)\log(q-z) \notag\\
			&\quad+(q+k)z\log z-(q+k)(z+r)\log(z+r) \notag\\
			&\quad+rq\log r+2kq\sum_{p \mid q \atop p \text{~prime}}\frac{\log p}{p-1},\label{def_f}\\
			g(z)&:=\frac{(2z+r)^{1-\delta_k}}{\sqrt{z}\sqrt{z+r}}\left(\frac{\sqrt{q-z}\sqrt{z+r+q}}{\sqrt{z}\sqrt{z+r}}\right)^{k}\label{def_g}, 
		\end{align}
		where $\sqrt{(\cdot)} = \exp(\log(\cdot)/2)$ is defined on $\mathbb{C} \setminus (-\infty,0]$. 
		Recall that $\log(\cdot)$ denotes the principal branch of logarithm throughout this paper. 
	\end{definition}
	
	Recall that the Log Gamma function $\log \Gamma(\cdot)$ is a holomorphic function on $\mathbb{C} \setminus (-\infty,0]$ defined by
	\[
	\log \Gamma(z) := -\gamma z -\log z + \sum_{m=1}^{+\infty} \left( \frac{z}{m} - \log\left(1+\frac{z}{m}\right)\right),
	\]
	where $\gamma = 0.577\ldots$ is the Euler--Mascheroni constant.
	
	\begin{lemma}[A version of Stirling's formula]
		\label{lem_Stirling}
		For any $z \in \mathbb{C} \setminus (-\infty,0]$, we have
		\[
		\left| \log \Gamma(z) - \left( \left( z- \frac{1}{2} \right)\log z - z + \frac{\log(2\pi)}{2} \right)\right| \leqslant \frac{\pi}{8} \cdot \frac{1}{\operatorname{dist}(z,\mathbb{R}_{\leqslant 0})},
		\]
		where $\operatorname{dist}(z,\mathbb{R}_{\leqslant 0})$ denotes the distance between $z$ and $(-\infty,0]$.
	\end{lemma}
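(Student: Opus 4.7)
The plan is to express the left-hand side as a classical Binet-type integral and then estimate it by elementary means. Denote
\[
\mu(z) := \log\Gamma(z) - \left(\left(z - \tfrac{1}{2}\right)\log z - z + \tfrac{1}{2}\log(2\pi)\right).
\]
Binet's first integral expression for $\log\Gamma$ gives, for $\operatorname{Re}(z) > 0$,
\[
\mu(z) = \int_0^\infty \frac{\tfrac{1}{2} - \{t\}}{z + t}\,\de t,
\]
and a single integration by parts (using $\bigl(-\tfrac{1}{2}B_2(\{t\})\bigr)' = \tfrac{1}{2} - \{t\}$ together with $B_2(0) = \tfrac{1}{6}$) yields
\[
\mu(z) = \frac{1}{12z} - \frac{1}{2}\int_0^\infty \frac{B_2(\{t\})}{(z+t)^2}\,\de t.
\]
The right-hand side extends holomorphically to the simply connected cut plane $\mathbb{C} \setminus (-\infty, 0]$, so this identity holds throughout that domain by analytic continuation.

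Using the uniform estimate $|B_2(\{t\})| \leqslant B_2(0) = 1/6$, I get
\[
|\mu(z)| \leqslant \frac{1}{12|z|} + \frac{1}{12}\int_0^\infty \frac{\de t}{|z+t|^2}.
\]
The task is then to bound this integral in terms of $d := \operatorname{dist}(z, \mathbb{R}_{\leqslant 0})$, which I would handle by splitting on the sign of $\operatorname{Re}(z)$. If $\operatorname{Re}(z) \geqslant 0$, then $d = |z|$, and the elementary inequality $a + |b| \geqslant \sqrt{a^2 + b^2}$ gives $|z + t| \geqslant (|z| + t)/\sqrt{2}$ for $t \geqslant 0$, whence the integral is at most $2/|z| = 2/d$. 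If $\operatorname{Re}(z) < 0$, then necessarily $\operatorname{Im}(z) \neq 0$ and $d = |\operatorname{Im}(z)|$; the substitution $u = \operatorname{Re}(z) + t$ turns the integral into $\tfrac{1}{|\operatorname{Im}(z)|}\bigl(\tfrac{\pi}{2} + \arctan\bigl(|\operatorname{Re}(z)|/|\operatorname{Im}(z)|\bigr)\bigr) \leqslant \pi/d$.

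Putting everything together, in the first case $|\mu(z)| \leqslant 1/(12|z|) + 1/(6|z|) = 1/(4d) < \pi/(8d)$, and in the second case $|z| \geqslant |\operatorname{Im}(z)| = d$ gives $|\mu(z)| \leqslant (1 + \pi)/(12d) < \pi/(8d)$, where the final inequality is equivalent to $\pi > 2$. Either way the stated bound follows with room to spare. The only mildly delicate step is the analytic continuation of the integration-by-parts identity from $\operatorname{Re}(z) > 0$ to the full cut plane; the rest is short case analysis, and the principal ``obstacle'' is just checking that the constant $\pi/8$ is large enough in both regimes, which it comfortably is.
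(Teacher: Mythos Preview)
Your proof is correct and follows essentially the same route as the paper. The paper quotes directly the integral representation
\[
\mu(z) = -\frac{1}{2}\int_0^\infty \frac{\{t\}^2-\{t\}}{(z+t)^2}\,\de t
\]
(with $|\{t\}^2-\{t\}|\le 1/4$) and proves the single uniform bound $\int_0^\infty |z+t|^{-2}\,\de t \le \pi/\operatorname{dist}(z,\mathbb{R}_{\le 0})$, landing exactly on $\pi/8$; you instead keep the $B_2(\{t\})$ form (equivalent up to the constant $1/6$, which produces your extra $1/(12z)$ term) and bound the same integral by $2/d$ or $\pi/d$ according to the sign of $\operatorname{Re}(z)$, reaching $\pi/8$ with room to spare. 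The analytic-continuation step you flag as ``mildly delicate'' is handled identically in both arguments.
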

	
	\begin{proof}
		By \cite[Equation ($\Gamma 13$), p.\,423]{Lan1999} and \cite[Lemma 2.2, p.\,425]{Lan1999}, we have
		\[
		\log \Gamma(z) = \left( z- \frac{1}{2} \right)\log z - z + \frac{\log(2\pi)}{2} - \frac{1}{2}\int_{0}^{+\infty} \frac{\{t\}^2-\{t\}}{(z+t)^2} \,\de t \quad\text{for any}~ z \in \mathbb{C} \setminus(-\infty,0],
		\]
		where $\{t\}$ denotes the fractional part of a real number $t$.
		Since $|\{t\}^2-\{t\}| \leqslant 1/4$, it is sufficient to prove that
		\[
		\int_{0}^{+\infty} \frac{\de t}{|z+t|^2} \leqslant \frac{\pi}{\operatorname{dist}(z,\mathbb{R}_{\leqslant 0})} \quad\text{for any}~z \in \mathbb{C} \setminus(-\infty,0].
		\]
		Write $z=x+iy$, where $x,y \in \mathbb{R}$.
		If $x \leqslant 0$, then $\operatorname{dist}(z,\mathbb{R}_{\leqslant 0}) = |y|$, and
		\[
		\int_{0}^{+\infty} \frac{\de t}{|z+t|^2} =\left(\int_{0}^{-x}+\int_{-x}^{+\infty}\right) \frac{\de t}{(t+x)^2+y^2} \leqslant 2\int_{0}^{+\infty} \frac{\de t}{t^2+y^2} = \frac{\pi}{|y|} = \frac{\pi}{\operatorname{dist}(z,\mathbb{R}_{\leqslant 0})}.
		\]
		If $x>0$, then $\operatorname{dist}(z,\mathbb{R}_{\leqslant 0}) = \sqrt{x^2+y^2}$, and
		\[
		\int_{0}^{+\infty} \frac{\de t}{|z+t|^2} \leqslant \int_{0}^{+\infty} \frac{\de t}{t^2+(x^2+y^2)} = \frac{\pi}{2\sqrt{x^2+y^2}} = \frac{\pi}{2\operatorname{dist}(z,\mathbb{R}_{\leqslant 0})}.\qedhere
		\]
	\end{proof}
	
	\begin{lemma}
		\label{lem_tilde_S_n}
		Let $f(z)$ and $g(z)$ be the functions defined in Definition \ref{def_f_g}.
		For any sufficiently large $n \in q!\mathbb{N}$ and any $\mu \in (0,q)$, we have
		\[
		S_n=n^{O(1)} \cdot \widetilde{S}_n,
		\]
		where
		\begin{equation}
			\label{def_tilde_S_n}
			\widetilde{S}_n:=\frac{1}{2\pi i}\int_{\mu-i\infty}^{\mu+i\infty}\sin^k(n\pi z)\cdot\cot_k(n\pi z)\cdot e^{nf(z)}\cdot g_n(z)\,\de z,
		\end{equation}
		and $g_n(z)$ is a holomorphic functions on $\mathbb{C} \setminus \left( (-\infty,0] \cup [q,+\infty) \right)$ such that 
		\begin{equation}
			\label{g_n=g}
			g_n(z) = g(z) \left( 1+O\left((\varepsilon_0n)^{-1}\right) \right) \quad\text{uniformly on~} D_{\varepsilon_0},
		\end{equation} 
		with
		\begin{equation}\label{def_D_eps}
			D_{\varepsilon_0} := \left\{ z \in \mathbb{C} ~\big|~ \operatorname{dist}(z,\mathbb{R}_{\leqslant 0} \cup \mathbb{R}_{\geqslant q}) > \varepsilon_0 \right\}
		\end{equation}
		for any preassigned $\varepsilon_0 > 0$.
		The implicit constants depend only on $k,q,r$. 
	\end{lemma}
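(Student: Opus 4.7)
The plan is to start from the integral representation $S_n = \tfrac{\pi^{k-1}i}{2}\int_{M-i\infty}^{M+i\infty}\cot_k(\pi z) R_n(z)\,\de z$ given by Lemma \ref{S_n=int_cot_R}, rescale via $z = nw$ (with $M = n\mu$, so that $\mu \in (0,q)$), and then compare $R_n(nw)$ to $\sin^k(n\pi w)\,e^{nf(w)}g(w)$ through Stirling's formula. The first step is to convert every Pochhammer symbol in $R_n(nw)$ into a ratio of Gamma functions, using
\[
(nw-qn)_{qn} = \frac{\Gamma(nw)}{\Gamma(n(w-q))},\quad (nw+rn+1)_{qn} = \frac{\Gamma(n(w+r+q)+1)}{\Gamma(n(w+r)+1)},\quad (qnw)_{rqn+1} = \frac{\Gamma(qn(w+r)+1)}{\Gamma(qnw)}.
\]

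The key algebraic step is to handle $\Gamma(n(w-q))^{-k}$, the one factor whose argument has negative real part on the contour and to which Lemma \ref{lem_Stirling} cannot be applied directly. I would invoke the reflection formula $\Gamma(w')\Gamma(1-w')=\pi/\sin(\pi w')$; because $n \in q!\mathbb{N}$ forces $n$ (and hence $nq$) to be even, $\sin(n\pi(w-q))=\sin(n\pi w)$, so
\[
\Gamma(n(w-q))^{-k} = \pi^{-k}\sin^k(n\pi w)\,\Gamma(n(q-w)+1)^k.
\]
This produces exactly the $\sin^k(n\pi w)$ factor in $\widetilde S_n$, and simultaneously replaces the problematic Gamma by one whose argument is uniformly far from $\mathbb{R}_{\leqslant 0}$ for $w \in D_{\varepsilon_0}$.

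Next, I would apply Lemma \ref{lem_Stirling} to each of the six remaining Gamma factors together with Stirling's formula for $(rqn)!$ and $(qn)!^{2k}$, and collect terms. The $n\log n$ contributions cancel: the Gamma part gives $(2kq-qr)n\log n$, while the factorial ratio gives $(rq-2kq)n\log n$. Collecting the remaining $O(n)$ terms reproduces exactly $nf(w)$: $rq\log r$ comes from $(rqn)!$, the arithmetic sum $2kq\sum_{p\mid q}(\log p)/(p-1)$ is inherited from the explicit prefactor of $R_n$, and the $w$-dependent combinations add up to $(q+k)w\log w - (q+k)(w+r)\log(w+r) + k(w+r+q)\log(w+r+q) + k(q-w)\log(q-w)$. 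The next-order $\tfrac12\log$ corrections from Stirling assemble precisely into the square-root factors defining $g(w)$, while $(2qnw+rqn)^{1-\delta_k} = (qn)^{1-\delta_k}(2w+r)^{1-\delta_k}$ supplies the $(2w+r)^{1-\delta_k}$ factor up to a power of $n$. Since the Stirling error in Lemma \ref{lem_Stirling} is $O(1/(\varepsilon_0 n))$ uniformly on $D_{\varepsilon_0}$ for each Gamma, exponentiating yields $g_n(w) = g(w)(1+O((\varepsilon_0 n)^{-1}))$ uniformly on $D_{\varepsilon_0}$, and the accumulated absolute constants and powers of $n$ form the $n^{O(1)}$ prefactor.

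The main obstacle is the algebraic bookkeeping needed to match the full $O(n)$ combination with $f(w)$: one must track the contributions of $(rqn)!$, $(qn)!^{2k}$, $q^{2kqn}$, $\prod_{p\mid q}p^{2kqn/(p-1)}$, the six Gamma factors, and $(2qnw+rqn)^{1-\delta_k}$ in parallel and watch them conspire to the four-term expression in Definition \ref{def_f_g}. This is a long but mechanical identity rather than a deep analytic difficulty. Finally, convergence of the transformed vertical-line integral is not an issue: on $\operatorname{Re}(w)=\mu$ the coefficient of $\log|\operatorname{Im} w|$ in $f(w)$ equals $2kq-qr = -q(r-2k)<0$, so $e^{nf(w)}$ decays polynomially as $|\operatorname{Im} w|\to\infty$, and $\sin^k(n\pi w)\cot_k(n\pi w)$ is uniformly bounded by Lemma \ref{property_of_cot_k}(3).
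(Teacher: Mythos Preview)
Your approach is essentially identical to the paper's: rescale $z\mapsto nz$, rewrite Pochhammers as Gamma ratios, invoke the reflection formula (using that $n$ is even) to produce $\sin^k(n\pi z)$ and replace $\Gamma(n(z-q))$ by $\Gamma(n(q-z)+1)$, then apply the uniform Stirling bound of Lemma~\ref{lem_Stirling} on $D_{\varepsilon_0}$ and collect terms into $e^{nf}g$. The paper does exactly this, defining $g_n$ by the \emph{exact} relation $R_n(nz) = \pi^{-k}(qn)^{-(k-1+\delta_k)}\sqrt{2r\pi/(qn)}\,\sin^k(n\pi z)e^{nf(z)}g_n(z)$ so that holomorphy of $g_n$ on the slit plane is automatic.

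One point needs correction in your final paragraph. On the vertical line $\operatorname{Re} w=\mu$ the factor $\sin^k(n\pi w)\cot_k(n\pi w)$ is \emph{not} uniformly bounded: by Lemma~\ref{property_of_cot_k}(3) it is a linear combination of $\cos(ln\pi w)$ with $l\le k-2$, which grows like $e^{n(k-2)\pi|\operatorname{Im} w|}$. Correspondingly, $\operatorname{Re} f(w)$ is not merely $-(r-2k)q\log|w|+O(1)$ but carries a dominant linear term $-k\pi|\operatorname{Im} w|$ (see Lemma~\ref{lem_asym_f_g}). It is the inequality $(k-2)<k$ between these two exponential rates that makes the integral absolutely convergent; the paper isolates this in Lemma~\ref{lem_S=sum_J}. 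Your polynomial-decay claim for $e^{nf(w)}$ alone would not suffice.
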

	
	\begin{proof}
		Taking $M=n\mu$ in Lemma \ref{S_n=int_cot_R} and changing the variable $z$ to $nz$, we obtain
		\[
		S_n = \frac{n\pi^{k-1}i}{2}\int_{\mu-i\infty}^{\mu+i\infty} \cot_k(n\pi z) R_n(nz)\,\de z.
		\]
		
		By rewriting each Pochhammer symbol in the expression of $R_n(nz)$ (see Definition \ref{def_R_n(t)}) as a ratio of Gamma functions, we have
		\begin{align*}
			R_n(nz) =&~ q^{2kqn} \prod_{p \mid q \atop p \text{~prime}} p^{2kqn/(p-1)} \cdot r \cdot (qn)^{1-\delta_k-2k} \\
			&\times \frac{(2z+r)^{1-\delta_k}}{z+r} \cdot \left( \frac{z+r+q}{z+r} \right)^k \\
			&\times \left(\frac{\Gamma(nz)}{\Gamma(nz-qn)\Gamma(qn)} \cdot \frac{\Gamma(nz+rn+qn)}{\Gamma(nz+rn)\Gamma(qn)}\right)^k \cdot \frac{\Gamma(qnz)\Gamma(rqn)}{\Gamma(qnz+rqn)}.
		\end{align*}
		Using the well-known Euler's reflection formula for Gamma functions
		\[
		\Gamma(nz-qn)\Gamma(qn-nz) = -\frac{\pi}{(nz-qn)\sin(\pi nz-qn\pi)},
		\]
		and the fact that $n$ is even, we obtain
		\begin{align*}
			R_n(nz) =&~ \pi^{-k} \cdot q^{2kqn} \prod_{p \mid q \atop p \text{~prime}} p^{2kqn/(p-1)} \cdot r \cdot q^{1-\delta_k-2k} n^{1-\delta_k -k} \\
			&\times \frac{(2z+r)^{1-\delta_k}}{z+r} \cdot \left( \frac{(q-z)(z+r+q)}{z+r} \right)^k \\
			&\times \sin^{k}(\pi n z) \cdot \left(\frac{\Gamma(nz)\Gamma(qn-nz)}{\Gamma(qn)} \cdot \frac{\Gamma(nz+rn+qn)}{\Gamma(nz+rn)\Gamma(qn)}\right)^k \cdot \frac{\Gamma(qnz)\Gamma(rqn)}{\Gamma(qnz+rqn)}.
		\end{align*}
		
		By Lemma \ref{lem_Stirling}, we have Stirling's formula
		\[
		\log\Gamma(w) = w\log w - w -\frac{\log w}{2} +\frac{\log(2\pi)}{2} + O\left( \operatorname{dist}(w,\mathbb{R}_{\leqslant 0})^{-1}\right), \quad\omega \in \mathbb{C}\setminus (-\infty,0].
		\]
		A straightforward computation using Stirling's formula shows that
		\[
		R_n(nz) = \frac{1}{\pi^{k}(qn)^{k-1+\delta_k}} \sqrt{\frac{2r\pi}{qn}}  \cdot \sin^{k}(n\pi z) e^{nf(z)}g(z)\left(1+O\left((\varepsilon_0 n)^{-1}\right)\right),\quad z \in D_{\varepsilon_0},
		\]
		where the domain $D_{\varepsilon_0}$ is defined by \eqref{def_D_eps} and the implicit constant depends only on $k,q,r$.
		Let us define the function $g_n(z)$ on $\mathbb{C} \setminus \big((-\infty,0] \cup [q,+\infty)\big)$ by
		\begin{equation}
			\label{def_g_n}
			R_n(nz) =: \frac{1}{\pi^{k}(qn)^{k-1+\delta_k}} \sqrt{\frac{2r\pi}{qn}}  \cdot \sin^{k}(n\pi z) e^{nf(z)}g_n(z).
		\end{equation}
		Then, we have
		\[
		g_n(z) = g(z)\left(1+O\left((\varepsilon_0 n)^{-1}\right)\right) \quad\text{uniformly for~} z \in D_{\varepsilon_0},
		\]
		and
		\begin{align*}
			S_n =&~ \frac{i}{(qn)^{k-1+\delta_k}}\sqrt{\frac{rn}{2q\pi}} \cdot \int_{\mu-i\infty}^{\mu+i\infty} \sin^k(n\pi z)\cdot\cot_k(n\pi z)\cdot e^{nf(z)}\cdot g_n(z)\,\de z \\
			=&~ n^{O(1)} \cdot \widetilde{S}_n.
		\end{align*}
		The proof of Lemma \ref{lem_tilde_S_n} is complete.
	\end{proof}
	
	\begin{lemma}
		\label{lem_asym_f_g}
		As $z \in \mathbb{C} \setminus \mathbb{R}$ and $|z| \to +\infty$, we have $g(z) = O(1)$ and
		\[
		f(z) = \operatorname{sgn}(\operatorname{Im} z) \cdot k\pi i z -(r-2k)q\log z  + O(1),
		\]
		where the implicit constants depend only on $k,q,r$.
	\end{lemma}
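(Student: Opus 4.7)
The plan is to expand every logarithmic and square-root factor as $|z| \to \infty$ in $\mathbb{C}\setminus\mathbb{R}$, using the Taylor series of the principal logarithm for the factors $z+a$ with $a \in \{0, r, r+q\}$, and peeling off $-z$ from the ``bad'' factor $q - z$ (which approaches the negative real axis). The single identity that carries all branch information is
\[
\log(-z) = \log z - \operatorname{sgn}(\operatorname{Im} z) \cdot i\pi, \qquad z \in \mathbb{C}\setminus\mathbb{R},
\]
a direct consequence of the principal branch convention. Since $q - z = (-z)(1 - q/z)$, this yields, for $|z| > q$,
\[
\log(q-z) = \log z - \operatorname{sgn}(\operatorname{Im} z) \cdot i\pi + \log(1 - q/z),
\]
and analogously $\sqrt{q-z} = \mp i\,\sqrt{z}\sqrt{1-q/z}$, the sign being opposite to $\operatorname{sgn}(\operatorname{Im} z)$.

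For $g(z)$: each of the factors $\sqrt{z+a}/\sqrt{z}$ tends to $1$, and $\sqrt{q-z}/\sqrt{z}$ tends to $\mp i$ by the identity above. The prefactor $(2z+r)^{1-\delta_k}/(\sqrt{z}\sqrt{z+r})$ tends to $2$ if $\delta_k = 0$ and to $0$ if $\delta_k = 1$. Therefore $g(z) = O(1)$.

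For $f(z)$: a direct Taylor expansion gives, for each $a \in \{0, r, r+q\}$,
\[
(z+a)\log(z+a) = z\log z + a\log z + a + O(|z|^{-1}),
\]
uniformly for $z \in \mathbb{C}\setminus\mathbb{R}$ with $|z|$ sufficiently large (the principal branch is consistent since $z+a$ stays off $(-\infty,0]$). The branch identity above, combined with $\log(1-q/z) = O(|z|^{-1})$, gives
\[
(q-z)\log(q-z) = -z\log z + q\log z + \operatorname{sgn}(\operatorname{Im} z) \cdot i\pi z + O(1).
\]
Substituting these four expansions into the definition of $f$ and collecting terms, the $z\log z$ coefficients cancel by $k - k + (q+k) - (q+k) = 0$; the coefficient of $\log z$ totals $k(r+q) + kq - (q+k)r = -(r-2k)q$; the only $\operatorname{sgn}(\operatorname{Im} z)$-dependent contribution is $\operatorname{sgn}(\operatorname{Im} z) \cdot k\pi i z$; and all remaining finite pieces, together with the constants $rq\log r + 2kq \sum_{p \mid q} \log p/(p-1)$ already present in the definition of $f$, amalgamate into an $O(1)$ error depending only on $k, q, r$. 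This yields exactly the claimed expansion.

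The main technical point is tracking branches uniformly as $z$ approaches (but never touches) the real axis, in particular for $\log(q-z)$ when $\arg(q-z) \approx \pm\pi$. The $\operatorname{sgn}(\operatorname{Im} z)$ identity handles this cleanly, and it is precisely this branch jump that produces the leading $\pm k\pi i z$ term in the asymptotics of $f$ — the term that will govern the oscillatory behavior required later for Nesterenko's criterion.
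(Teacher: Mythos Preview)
Your proposal is correct and takes essentially the same approach as the paper. Both arguments rest on the branch identity $\log(q-z)=\log z-\operatorname{sgn}(\operatorname{Im} z)\cdot\pi i+O(|z|^{-1})$ together with $\log(z+a)=\log z+O(|z|^{-1})$ for $a\in\{0,r,r+q\}$; you are slightly more explicit in isolating the $z\log z$ cancellation and in computing the actual limit of $g(z)$ rather than bounding $|g(z)|$, but the substance is identical.
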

	
	\begin{proof}
		Clearly, by \eqref{def_g} we have
		\[
		|g(z)| \leqslant \frac{\max\{1,|2z+r|\}}{\sqrt{|z|}\sqrt{|z+r|}}\left(\frac{\sqrt{|q-z|}\sqrt{|z+r+q|}}{\sqrt{|z|}\sqrt{|z+r|}}\right)^{k} = O(1).
		\]
		On the other hand, the claimed asymptotic behavior of $f(z)$ follows by substituting the estimates below into \eqref{def_f}:
		\begin{align*}
			&\log(z+r+q) = \log z + O\left( |z|^{-1} \right), \\
			&\log(q-z) = \log z - \operatorname{sgn}(\operatorname{Im} z) \cdot \pi i +  O\left( |z|^{-1} \right), \\
			&\log(z+r) = \log z + O\left( |z|^{-1} \right).
		\end{align*}
	\end{proof}
	
	\begin{lemma}
		\label{lem_S=sum_J}
		For any $n \in q!\mathbb{N}$, $\mu \in (0,q)$, and $\lambda \in (-k,k)$, the integral 
		\begin{equation}
			\label{def_J}
			J_{n,\lambda} := \frac{1}{2\pi i}\int_{\mu-i\infty}^{\mu+i\infty} e^{n(f(z)-\lambda \pi i z)} g_n(z) \,\mathrm{d}z  
		\end{equation}
		is absolutely convergent. 
		Moreover, we have
		\begin{equation}
			\label{S=sum_J}
			\widetilde{S}_n = \sum_{0 \leqslant l \leqslant k-2 \atop l \equiv k \pmod{2}} c_l \operatorname{Re}(J_{n,l}),
		\end{equation}
		where the constants $c_l$ \textup{(}$0 \leqslant l \leqslant k-2$, $l \equiv k \pmod{2}$\textup{)} depend only on $k$ and $c_{k-2} \neq 0$.
	\end{lemma}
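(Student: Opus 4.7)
The plan has three steps: establish the absolute convergence of $J_{n,\lambda}$, expand $\widetilde{S}_n$ by substituting the trigonometric identity of Lemma \ref{property_of_cot_k}(3) into \eqref{def_tilde_S_n}, and apply Schwarz reflection to identify the result with $\sum_l c_l \operatorname{Re}(J_{n,l})$. For the first step, I would parametrize the contour by $z = \mu + it$ and combine Lemma \ref{lem_asym_f_g} with the bound $g_n(z) = O(1)$ (which follows from \eqref{g_n=g} and the boundedness of $g$ given in Lemma \ref{lem_asym_f_g}). As $|t| \to +\infty$, the real part of $n(f(z) - \lambda \pi i z)$ behaves like $-n(k - \operatorname{sgn}(t)\lambda)\pi|t|$ up to logarithmic and bounded corrections; since $\lambda \in (-k,k)$, both coefficients $k-\lambda$ and $k+\lambda$ are strictly positive, delivering exponential decay uniformly in $n$.

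For the second step, Lemma \ref{property_of_cot_k}(3) gives
\[
\sin^k(n\pi z)\cot_k(n\pi z) \;=\; \sum_{\substack{0 \leqslant l \leqslant k-2 \\ l \equiv k \,(\mathrm{mod}\,2)}} c_l \cos(ln\pi z),
\]
an identity that extends to all of $\mathbb{C}$ by analytic continuation, since the zeros of $\sin^k(n\pi z)$ cancel the poles of $\cot_k(n\pi z)$ and both sides are entire. Writing $2\cos(ln\pi z) = e^{iln\pi z} + e^{-iln\pi z}$ and matching each exponential factor to the integrand in \eqref{def_J} identifies them with $J_{n,-l}$ and $J_{n,l}$ respectively, yielding $\widetilde{S}_n = \sum_l \tfrac{c_l}{2}(J_{n,-l} + J_{n,l})$. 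Because $l \leqslant k-2 < k$, both $\pm l$ lie inside $(-k,k)$, so every $J_{n,\pm l}$ is well-defined by step one, and interchanging the finite sum with the integral is justified.

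The third step is to show $\overline{J_{n,\lambda}} = J_{n,-\lambda}$ for every real $\lambda \in (-k,k)$, which immediately gives $\tfrac{1}{2}(J_{n,-l} + J_{n,l}) = \operatorname{Re}(J_{n,l})$ and completes \eqref{S=sum_J}. This reduces to verifying the Schwarz reflection identities $f(\bar z) = \overline{f(z)}$ and $g_n(\bar z) = \overline{g_n(z)}$: the former is immediate from \eqref{def_f}, since every logarithm has positive real argument on $(0,q)$, while the latter follows from \eqref{def_g_n} together with the fact that $R_n$ has real coefficients and that $\sin(n\pi z)$, $e^{nf(z)}$ both respect complex conjugation. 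Parametrizing by $z = \mu + it$ and substituting $t \mapsto -t$ then yields the desired identity. No step presents a deep difficulty; the main care required is bookkeeping the conjugation signs and ensuring that $\mu$ is chosen so that the contour lies in the common domain of holomorphy and reflection symmetry, which is exactly the hypothesis $\mu \in (0,q)$.
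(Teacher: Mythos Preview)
Your proposal is correct and follows essentially the same approach as the paper's proof: both establish absolute convergence via the asymptotics of Lemma~\ref{lem_asym_f_g} together with \eqref{g_n=g}, substitute the trigonometric identity of Lemma~\ref{property_of_cot_k}(3) into \eqref{def_tilde_S_n} to obtain $\widetilde{S}_n = \tfrac{1}{2}\sum_l c_l(J_{n,l}+J_{n,-l})$, and then use the Schwarz reflection identities $f(\bar z)=\overline{f(z)}$ and $g_n(\bar z)=\overline{g_n(z)}$ (from \eqref{def_f} and \eqref{def_g_n}) to conclude $\overline{J_{n,l}}=J_{n,-l}$.
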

	
	\begin{proof}
		As $z= \mu + it$ and $|t| \to +\infty$, we deduce from Lemma \ref{lem_asym_f_g} and Equation \eqref{g_n=g} that $g_n(z) = O(1)$ and
		\[
		\operatorname{Re}(f(z)-\lambda \pi i z) = -k\pi|t| + \lambda \pi t + O(\log |t|).
		\]
		Since $|\lambda| < k$, the integrand function in $J_{n,\lambda}$ decays exponentially at both $\mu \pm i\infty$. 
		Therefore, the integral $J_{n,\lambda}$ converges absolutely. 
		Then, Equation \eqref{def_tilde_S_n} and Lemma \ref{property_of_cot_k} (3) imply that
		\begin{align*}
			\widetilde{S}_{n} &= \frac{1}{2\pi i}\int_{\mu - i\infty}^{\mu+i\infty} \left( \sum_{0 \leqslant l \leqslant k-2 \atop l \equiv k \pmod{2}} c_l \cos(l n\pi z) \right)e^{nf(z)}g_n(z)\,\mathrm{d}z \\
			&= \frac{1}{2}\sum_{0 \leqslant l \leqslant k-2 \atop l \equiv k \pmod{2}} c_l\left(J_{n,l}+J_{n,-l}\right).
		\end{align*}
		By \eqref{def_f} and \eqref{def_g_n}, we have $f(\overline{z})=\overline{f(z)}$ and $g_n(\overline{z})=\overline{g_n(z)}$. 
		It follows that $\overline{J_{n,l}} = J_{n,-l}$, and hence \eqref{S=sum_J} holds.
	\end{proof}
	
	\begin{lemma}
		\label{lem_general_L}
		Let $n \in q!\mathbb{N}$, $\mu \in (0,q)$, and $\lambda \in (-k,k)$.
		\begin{itemize}
			\item[\textup{(1)}] The contour of integration $\operatorname{Re}z=\mu$ in the integral \eqref{def_J} can be replaced by any other contour $\mathcal{L}$ parameterized by
			\[
			z(t)=
			\begin{cases}
				\mu+i(t-\mu) &\text{if~} t\in(-\infty,\mu],\\
				t+iy(t) &\text{if~} t\in[\mu,\mu^*],\\
				t+iy(\mu^*) &\text{if~} t\in[\mu^*,+\infty),
			\end{cases}
			\]
			where $\mu^{*}$ is any real number such that $\mu^* > \mu$, and $y(t)$ is any piecewise $C^1$ smooth non-decreasing function defined on the interval $[\mu,\mu^*]$ such that $y(\mu)=0$ and $y(t)>0$ for $t \in (\mu,\mu^*]$. 
			See Figure \textup{\ref{fig:general_L}}.
			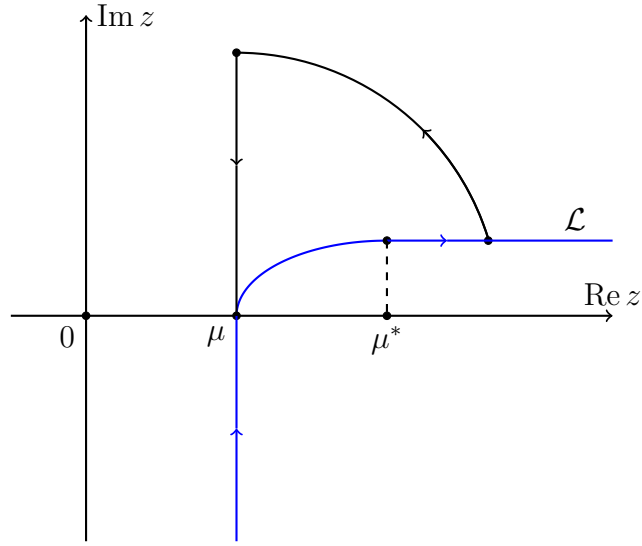
\begin{figure}[h]
				\centering
				\begin{tikzpicture}
					\draw [thick] [->] (-1,0)--(7,0);
					\draw [thick] [->] (0,-3)--(0,6);
					\draw[blue, domain=90:180, smooth, variable=\t,thick]
					plot ({2*cos(\t)+4}, {1*sin(\t)});
					\draw[fill] (0,0) circle [radius=0.05];
					\draw[fill] (4,0) circle [radius=0.05];
					\draw[fill] (2,0) circle [radius=0.05];
					\draw[fill] (4,1) circle [radius=0.05];
					\draw[fill] (5.3471,1) circle [radius=0.05];
					\draw[fill] (2,5.05880207) circle [radius=0.05];
					\draw[thick] [->] (5.3471,1) arc [radius=5.4441, start angle=10.8505, end angle= 45];
					\draw[thick] (2,5.05880207) arc [radius=5.4441, start angle=68.428609718, end angle= 10.8505];
					\draw [thick,black] [->] (2,5.05880207)--(2,2);
					\draw [thick,blue] [->] (2,-3)--(2,-1.5);
					\draw [thick,blue]  (2,-1.5)--(2,0);
					\draw [thick,black]  (2,0)--(2,2);
					\draw [thick,blue] [->] (4,1)--(4.8,1);
					\draw [thick,blue]  (4.8,1)--(7,1);
					\draw [thick,dashed]  (4,1)--(4,0);
					\node [below] at (4,0) {$\mu^*$};
					\node [below left] at (0,0) {$0$};
					\node [below left] at (2,0) {$\mu$};
					\node [above] at (7,0) {$\operatorname{Re}z$};
					\node [right] at (0,6) {$\operatorname{Im}z$};
					\node [above] at (6.5,1) {$\mathcal{L}$};
				\end{tikzpicture}
				\caption{contour $\mathcal{L}$ (blue).}
				\label{fig:general_L}
			\end{figure}
			
			\item[\textup{(2)}] For any sufficiently large real number $T$, we have
			\[
			\int_{|t|>T} | e^{n(f(z(t))-\lambda\pi iz(t))}g_n(z(t))z'(t)|\,\de t=O\left(T^{-n}\right),
			\]
			where the implicit constant depends only on $k,q,r$.
		\end{itemize}
	\end{lemma}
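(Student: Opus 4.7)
My plan is to prove (1) by contour deformation via Cauchy's theorem, and (2) by direct estimation using Lemma~\ref{lem_asym_f_g}. Both parts exploit two complementary decay mechanisms available in the integrand: an exponential decay in $|\operatorname{Im} z|$ coming from the $\operatorname{sgn}(\operatorname{Im} z)\cdot k\pi i z$ term in $f(z)$, and a polynomial decay of order $|z|^{-n(r-2k)q}$ coming from the $-(r-2k)q\log z$ term, together with $|g_n(z)| = O(1)$.

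For part (1), note first that the vertical contour $\operatorname{Re} z = \mu$ and the new contour $\mathcal{L}$ coincide on the lower half plane (both traverse the vertical ray from $\mu - i\infty$ up to $z=\mu$), so it is enough to deform the upper half. I would apply Cauchy's theorem to the closed contour built from the vertical segment from $\mu$ up to $\mu + iT$, the horizontal segment from $\mu + iT$ to $M + iT$, the vertical segment from $M + iT$ down to $M + iy(\mu^*)$, the horizontal segment from $M + iy(\mu^*)$ to $\mu^* + iy(\mu^*)$, and finally the curve $z = t + iy(t)$ traced from $\mu^*$ back to $\mu$ (for $T>0$ and $M > \mu^*$ both large). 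The interior of this loop lies above the curve and the horizontal ray, so its closure meets the real axis only at $z=\mu$; since $\mu \in (0,q)$, this avoids both cuts $(-\infty,0]$ and $[q,+\infty)$, and hence $f$ and $g_n$ are holomorphic throughout the enclosed region. Cauchy's theorem then gives a vanishing closed-loop integral. To send $T \to +\infty$ and then $M \to +\infty$, I would use Lemma~\ref{lem_asym_f_g} to bound the integrand on the top segment by $e^{-n(k-\lambda)\pi T} \cdot T^{-n(r-2k)q} \cdot e^{O(n)}$ (using $\lambda < k$), which vanishes as $T \to +\infty$, and on the right segment by a constant multiple of $M^{-n(r-2k)q}$, which vanishes as $M \to +\infty$ because $r > 2k$.

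For part (2), I would split the tail into $t < -T$ and $t > T$ (with $T > \mu^*$). On the descending vertical part $z(t) = \mu + i(t-\mu)$ with $t < -T$ we have $\operatorname{Im} z = t - \mu < 0$, and Lemma~\ref{lem_asym_f_g} yields
\[
\operatorname{Re}\bigl(f(z) - \lambda \pi i z\bigr) = (k+\lambda)\pi(t-\mu) - (r-2k)q \log|t| + O(1);
\]
since $k + \lambda > 0$, the factor $e^{n(k+\lambda)\pi(t-\mu)}$ drives the tail to zero super-exponentially in $T$, which trivially beats $T^{-n}$. On the horizontal part $z(t) = t + iy(\mu^*)$ with $t > T$, Lemma~\ref{lem_asym_f_g} gives
\[
\operatorname{Re}\bigl(f(z) - \lambda \pi i z\bigr) = -(k-\lambda)\pi y(\mu^*) - (r-2k)q \log t + O(1),
\]
so the integrand is bounded by $C^n \cdot t^{-n(r-2k)q}$ for some constant $C$ depending only on $k,q,r$. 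Integrating over $t > T$ produces a bound of the form $C^n T^{1 - n(r-2k)q}$. Because $(r-2k)q \geqslant 3$, the exponent on $T$ is at most $1 - 3n \leqslant -n$ for $n \geqslant 1$, and the surplus factor $T^{-n((r-2k)q - 2)}$ absorbs $C^n$ once $T \geqslant T_0(k,q,r)$, yielding the desired $O(T^{-n})$ with implicit constant depending only on $k,q,r$.

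The main technical care lies in tracking the $e^{O(n)}$ prefactors (produced by the $O(1)$ error terms in Lemma~\ref{lem_asym_f_g} and by the uniform bound on $g_n$) and showing that they do not spoil the target $O(T^{-n})$ estimate uniformly in $n$. This is handled because the strict inequality $r > 2k$ combined with $q \geqslant 3$ supplies surplus polynomial decay in $|z|$ beyond $|z|^{-n}$, so the threshold on $T$ can be taken independent of $n$. Apart from this bookkeeping, the argument is a standard application of Cauchy's theorem together with the quantitative asymptotics already recorded in Lemma~\ref{lem_asym_f_g}.
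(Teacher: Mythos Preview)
Your proposal is correct and follows essentially the same approach as the paper: close the contour using Cauchy's theorem, and bound the integrand on the closing pieces and on the tails via the asymptotics of Lemma~\ref{lem_asym_f_g} together with the uniform bound $|g_n(z)|=O(1)$. The only cosmetic differences are that the paper closes the contour with a single circular arc $\gamma_T$ of radius $|z(T)|$ (rather than your rectangular segments), and it treats both tails in one stroke by observing that $-k\pi|\operatorname{Im}z|+\lambda\pi\operatorname{Im}z\leqslant 0$ whenever $|\lambda|<k$, so that the single inequality $\operatorname{Re}(f(z)-\lambda\pi i z)\leqslant -(r-2k)q\log|z|+O(1)$ suffices on both ends and on the arc, yielding directly $e^{O(n)}/T^{(r-2k)qn-1}=O(T^{-n})$.
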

	
	\begin{proof}
		Take a small $\varepsilon_0 > 0$ such that both contours $\operatorname{Re}z=\mu$ and $\mathcal{L}$ lie in the domain $D_{\varepsilon_0}$ defined by \eqref{def_D_eps}. 
		As $z \in D_{\varepsilon_0}$ and $|z| \to +\infty$, by Lemma \ref{lem_asym_f_g}, Equation \eqref{g_n=g}, and the fact $|\lambda|<k$, we have
		\begin{align}
			\operatorname{Re}(f(z)-\lambda \pi i z) &= -k\pi |\operatorname{Im}z| +\lambda \pi \operatorname{Im} z -(r-2k)q\log|z| +O(1) \notag\\
			&\leqslant -(r-2k)q\log|z| +O(1), \label{731}\\
			|g_n(z)| &= O(1), \label{732}
		\end{align}
		where the implicit constants depend only on $k,q,r,\varepsilon_0$.
		
		We first prove part (1). 
		For any sufficiently large $T$, we join the point $z(T)$ of the new contour $\mathcal{L}$ and a point of the original contour $\operatorname{Re}(z)=\mu$ by an arc $\gamma_T$ of radius $|z(T)|$ with center at the origin. 
		By \eqref{731} and \eqref{732}, we have
		\[
		\int_{\gamma_T} \left|e^{n(f(z)-\lambda\pi iz)}g_n(z)\right|\,|\de z| \leqslant \frac{e^{O(n)}}{T^{(r-2k)qn-1}} \to 0 \quad\text{as~} T \to +\infty.
		\]
		Therefore, we can deform the contour from $\operatorname{Re}z=\mu$ to $\mathcal{L}$ without changing the value of the integral $J_{n,\lambda}$.
		
		Now we prove part (2).
		By \eqref{731} and \eqref{732}, we have
		\[
		\left| e^{n(f(z(t))-\lambda\pi iz(t))}g_n(z(t))z'(t)\right| \leqslant \frac{e^{O(n)}}{|t|^{(r-2k)qn}} \quad\text{as~} t \to \pm\infty,
		\]
		and hence,
		\[
		\int_{|t|>T}  \left|e^{n(f(z(t))-\lambda\pi iz(t))}g_n(z(t))z'(t)\right|\de t\leqslant \frac{e^{O(n)}}{T^{(r-2k)qn-1}} =O(T^{-n}).\qedhere
		\]
	\end{proof}

	\section{Solutions of the equation $h(z)=\lambda\pi i$.}
	\label{Sect_h(z)}
	
	In view of Lemma \ref{lem_S=sum_J}, our aim is to estimate $J_{n,l}$ using the saddle-point method. 
	The saddle points of the function $f(z) - l \pi i z$ are solutions of the equation $f'(z)=l \pi i$. 
	Note that
	\[
	f'(z) = (q+k)(\log z - \log(z+r)) + k(\log(z+r+q) - \log(q-z)). 
	\]
	By the substitution $z=r(w-1)/2$, we obtain
	\begin{equation}\label{why_h}
		f'\left(\frac{r(w-1)}{2}\right)=(a+b)\big(\log(w-1)-\log(w+1)\big)+b\big(\log(1+s+w)-\log(1+s-w)\big),
	\end{equation}
	where $a=q$, $b=k$, and $s=2q/r$. 
	Note that the condition $r>2k$ converts to $a>sb$. 
	In the following, we consider functions of the form \eqref{why_h} in a slightly more general context. 
	
	\begin{definition}
		\label{def_h}
		Fix $a,b,s\in\mathbb{R}_{>0}$ with $a>sb$. 
		We define the following holomorphic function on $\mathbb{C}\setminus \big( (-\infty,1] \cup [1+s,+\infty) \big)$.
		\begin{equation*}
			h(z):=(a+b)\big(\log(z-1)-\log(z+1)\big)+b\big(\log(1+s+z)-\log(1+s-z)\big).
		\end{equation*}
	\end{definition}
	
	In this section, we focus on studying the solutions of the equation
	\begin{equation}
		\label{h=lambda_pi_i}
		h(z)=\lambda\pi i,
	\end{equation}
	where $\lambda\in\mathbb{R}$ is a fixed parameter. 
	Equation \eqref{h=lambda_pi_i} has been studied by Zudilin in \cite{Zud2002} under some additional assumptions. 
	We remove all unnecessary assumptions and simplify Zudilin's arguments.
	
	\subsection{Real part of $h(z)$.}
	
	Note that any solution $z$ of Equation \eqref{h=lambda_pi_i} satisfies $\operatorname{Re}(h(z)) = 0$. 
	Write $z=x+iy$, where $x,y \in \mathbb{R}$, then
	\begin{equation*}
		\operatorname{Re}(h(z))=\frac{a+b}{2}\log\frac{(x-1)^2+y^2}{(x+1)^2+y^2}+\frac{b}{2}\log\frac{(x+1+s)^2+y^2}{(x-1-s)^2+y^2}.
	\end{equation*}
	It is convenient to define the following.
	\begin{definition}
		\label{def_H(x,y)}
		Fix $a,b,s\in\mathbb{R}_{>0}$ with $a>sb$. 
		Define the continuous function $H: \mathbb{R}^2 \rightarrow \mathbb{R} \cup\{\pm \infty\}$ by
		\[
		H(x,y) := \frac{a+b}{2}\log\frac{(x-1)^2+y^2}{(x+1)^2+y^2}+\frac{b}{2}\log\frac{(x+1+s)^2+y^2}{(x-1-s)^2+y^2}.
		\]
	\end{definition}
	Here, the extended real line $\mathbb{R} \cup \{\pm \infty\}$ is equipped with the order topology. 
	Note that the function $H(x,y)$ is an extension of $\operatorname{Re}(h(x+iy))$. 
	In this subsection, we study the solutions of the equation $H(x,y)=0$. 
	Since $H(x,y)$ satisfies
	\begin{equation}
		\label{sym_of_H_1}
		H(-x,y) = -H(x,y), \quad H(x,-y)=H(x,y)
	\end{equation}
	for any $(x,y) \in \mathbb{R}^2$ and
	\begin{equation}
		\label{sym_of_H_2}
		H(0,y) = 0 \quad\text{for any~} y \in \mathbb{R},
	\end{equation}
	we may only consider the case that $x > 0$ and $y \geqslant 0$.
	
	\begin{lemma}
		\label{H(x,0)}
		There exist a unique $\eta_0 \in (1,1+s)$ and a unique $\eta_1 \in (1+s,+\infty)$ such that
		\begin{equation}
			\label{sign_of_H(x,0)}
			H(x,0)
			\begin{cases}
				<0&\text{if}\; x\in(0,\eta_0)\cup(\eta_1,+\infty),\\
				=0&\text{if}\; x=\eta_0 \text{~or~} x=\eta_1,\\
				>0&\text{if}\; x\in(\eta_0,\eta_1).
			\end{cases}
		\end{equation}
		Moreover, we have
		\begin{equation}
			\label{partial_H(eta_0)}
			\frac{\partial H}{\partial x}(\eta_0,0)>0 \quad\text{and}\quad \frac{\partial H}{\partial x}(\eta_1,0)<0.
		\end{equation}
	\end{lemma}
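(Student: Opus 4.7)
\smallskip

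My plan is to reduce everything to a one-variable analysis of the restriction $H(x,0)$, which is explicit. For $x > 0$ the definition gives
\[
H(x,0) = (a+b)\log\left|\frac{x-1}{x+1}\right| + b\log\left|\frac{x+1+s}{x-1-s}\right|,
\]
and I would first record the boundary behavior on $(0,+\infty)$: $H(0,0)=0$, $H(x,0)\to -\infty$ as $x\to 1^{\pm}$ from the factor $(a+b)\log|x-1|$, $H(x,0)\to +\infty$ as $x\to (1+s)^{\pm}$ from the factor $-b\log|x-1-s|$, and
\[
H(x,0) = -(a+b)\cdot\frac{2}{x} + b\cdot\frac{2(1+s)}{x} + O(x^{-2}) = \frac{2(bs-a)}{x} + O(x^{-2}) \quad (x\to+\infty),
\]
so $H(x,0)\to 0^{-}$ as $x\to+\infty$ because $a>sb$. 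This single asymptotic is where the hypothesis $a>sb$ enters; without it the lemma would fail.

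Next I would differentiate in $x$ and put the derivative over a common denominator:
\[
\frac{\partial H}{\partial x}(x,0) = \frac{2(a+b)}{x^2-1} - \frac{2b(1+s)}{x^2-(1+s)^2} = \frac{2\big[(a-bs)x^2 - a(1+s)^2 - bs(1+s)\big]}{(x^2-1)\bigl(x^2-(1+s)^2\bigr)}.
\]
Since $a-bs>0$, the numerator is a strictly increasing function of $x^2$ with a unique positive zero
\[
x_{*} = \sqrt{\frac{a(1+s)^2+bs(1+s)}{a-bs}},
\]
and a short computation (namely $\frac{a(1+s)+bs}{a-bs} > 1+s \iff 2bs+bs^2>0$) shows $x_{*} > 1+s$.

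On the interval $(1,1+s)$ both factors of the denominator have opposite signs (denominator $<0$) and the numerator is $<0$ because $x^2<(1+s)^2<x_{*}^2$, so $\partial_x H(x,0)>0$ throughout. Combined with $H(1^+,0)=-\infty$ and $H((1+s)^-,0)=+\infty$, this gives a unique zero $\eta_0\in(1,1+s)$ with $\partial_x H(\eta_0,0)>0$, and the sign statement $H(x,0)<0$ on $(0,1)\cup(1,\eta_0)$ and $H(x,0)>0$ on $(\eta_0,1+s)$ follows (using $H(0,0)=0$ and the same sign analysis of $\partial_x H$ on $(0,1)$, where both the numerator and the denominator are now positive, hence $\partial_x H<0$).

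On $(1+s,+\infty)$ the denominator is positive while the numerator is negative on $(1+s,x_{*})$ and positive on $(x_{*},+\infty)$, so $H(\cdot,0)$ strictly decreases from $+\infty$ to a global minimum at $x_{*}$ and then strictly increases to $0^{-}$. The asymptotic $H\to 0^{-}$ forces $H(x_{*},0)<0$, hence there is a unique zero $\eta_1\in(1+s,x_{*})$ with $\partial_x H(\eta_1,0)<0$, and $H(x,0)>0$ on $(1+s,\eta_1)$ while $H(x,0)<0$ on $(\eta_1,+\infty)$. Assembling the three intervals yields \eqref{sign_of_H(x,0)} and \eqref{partial_H(eta_0)}. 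The only subtle point is the sign of the asymptotic constant $2(bs-a)$ at infinity; everything else is bookkeeping with a rational function, and once $a>sb$ is used there the argument is entirely routine.
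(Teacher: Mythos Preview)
Your proof is correct and follows essentially the same route as the paper: reduce to the one-variable function $H(x,0)$, compute $\partial_x H(x,0)$, determine its sign on each of the intervals $(0,1)$, $(1,1+s)$, $(1+s,x_*)$, $(x_*,+\infty)$, and combine this monotonicity information with the boundary values $H(0,0)=0$, $H(1,0)=-\infty$, $H(1+s,0)=+\infty$, $H(+\infty,0)=0^{-}$. Your version is slightly more explicit (you put the derivative over a common denominator and exhibit $x_*$ in closed form, and you spell out the asymptotic $H(x,0)=2(bs-a)/x+O(x^{-2})$), but these are exactly the ``straightforward computations'' the paper alludes to.

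There is one slip to fix: in your parenthetical treatment of the interval $(0,1)$ you write ``both the numerator and the denominator are now positive, hence $\partial_x H<0$''. This is self-contradictory. In fact on $(0,1)$ the denominator $(x^2-1)(x^2-(1+s)^2)$ is positive (product of two negative factors) while the numerator $2[(a-bs)x^2-a(1+s)^2-bs(1+s)]$ is negative (since $x<x_*$); hence $\partial_x H(x,0)<0$ there, and your conclusion that $H(x,0)<0$ on $(0,1)$ is correct.
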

	
	\begin{proof}
		We start by the expression
		\[
		H(x,0)=
		\begin{cases}
			(a+b)\log\frac{1-x}{1+x}+b\log\frac{1+s+x}{1+s-x} &\text{if}\; 0<x<1,\\
			(a+b)\log\frac{x-1}{x+1}+b\log\frac{1+s+x}{1+s-x} &\text{if}\; 1<x<1+s,\\
			(a+b)\log\frac{x-1}{x+1}+b\log\frac{x+1+s}{x-1-s} &\text{if}\; x>1+s.
		\end{cases}
		\]
		By a straightforward computation, we obtain
		\[
		\frac{\partial H}{\partial x}(x,0)
		\begin{cases}
			<0 &\text{if}\; 0<x<1,\\
			>0 &\text{if}\; 1<x<1+s,\\
			<0 &\text{if}\; 1+s<x<\sqrt{\frac{(a+b)(1+s)^2-b(1+s)}{a-sb}},\\
			>0 &\text{if}\; x>\sqrt{\frac{(a+b)(1+s)^2-b(1+s)}{a-sb}}.
		\end{cases}
		\]
		Note that
		\[
		H(0,0)=0, \quad H(1,0)=-\infty, \quad H(1+s,0)=+\infty, \quad\text{and}~\lim_{x\to+\infty} H(x,0)=0.
		\]
		Therefore, the equation $H(x,0)=0$ has one solution $\eta_0 \in (1,1+s)$ and another solution $\eta_1 \in (1+s,+\infty)$, satisfying \eqref{sign_of_H(x,0)} and \eqref{partial_H(eta_0)}.
	\end{proof}
	
	\begin{lemma}
		\label{H(x,y)}
		Let $\eta_0$ and $\eta_1$ be the real numbers defined in Lemma \ref{H(x,0)}. Then, there exists a $C^1$ smooth function $Y_0 \colon (\eta_0,\eta_1) \rightarrow \mathbb{R}_{>0}$ such that
		\begin{equation}
			\label{sign_of_H(x,y)}
			H(x,y)
			\begin{cases}
				<0 &\text{if}\; x \in (0,\eta_0] \cup [\eta_1,+\infty) \;\text{and}\; y > 0, \\
				<0 &\text{if}\; x \in (\eta_0,\eta_1) \;\text{and}\; y > Y_0(x), \\
				=0 &\text{if}\; x \in (\eta_0,\eta_1) \;\text{and}\; y = Y_0(x), \\
				>0  &\text{if}\; x \in (\eta_0,\eta_1) \;\text{and}\; 0< y < Y_0(x). \\
			\end{cases}
		\end{equation}
		Moreover, we have 
		\begin{equation}
			\label{Y_0_is_continuous_at_end_points}
			\lim_{x \to \eta_0^+} Y_0(x)=0, \quad \lim_{x \to \eta_1^-} Y_0(x)=0, 
		\end{equation}
		and
		\begin{equation}
			\label{H_partial_y<0_on_C}
			\frac{\partial H}{\partial y}(x,Y_0(x)) < 0 \quad \text{for any~} x\in (\eta_0,\eta_1).
		\end{equation}
	\end{lemma}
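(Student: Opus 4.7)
\medskip

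\noindent\textbf{Proof plan for Lemma \ref{H(x,y)}.}
The strategy is to fix $x>0$ and study $h_x(y):=H(x,y)$ as a one-variable function on $y\ge 0$. The two anchoring facts are $h_x(0)=H(x,0)$, whose sign is given by Lemma \ref{H(x,0)}, and $\lim_{y\to+\infty}h_x(y)=0$, which is immediate from the definition of $H$. The whole lemma will follow once the monotonicity type of $h_x$ is pinned down for each $x>0$.

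The key computation is that a straightforward differentiation plus the identities
\[
\tfrac{1}{A^2+y^2}-\tfrac{1}{B^2+y^2}=\tfrac{B^2-A^2}{(A^2+y^2)(B^2+y^2)},\qquad (x^2+\alpha^2)^2-(2\alpha x)^2=(x^2-\alpha^2)^2
\]
(applied with $\alpha=1$ and $\alpha=1+s$) produce a factorization of the form
\[
\frac{\partial H}{\partial y}(x,y)=\frac{4xy\,\widetilde N(x,y^2)}{D(x,y)},\qquad D(x,y)>0,
\]
where, with $u=y^2$, the polynomial $\widetilde N(x,u)=Au^2+Bu+C$ has coefficients
\[
A=a-sb>0,\qquad B=2x^2(a-sb)+2(1+s)^2a+2bs(1+s)>0,
\]
and $C=C(x)=(a+b)(x^2-(1+s)^2)^2-b(1+s)(x^2-1)^2$. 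Because $A,B>0$, the quadratic $\widetilde N(x,\cdot)$ has at most one positive root in $u$, so $h_x$ has at most one critical point $y^*(x)>0$; when it exists, $h_x$ is strictly decreasing on $(0,y^*(x))$ and strictly increasing on $(y^*(x),\infty)$.

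I then run the case analysis. If $C(x)\ge 0$, then $\widetilde N(x,u)>0$ for $u>0$, so $h_x$ is strictly increasing, which together with $h_x(\infty)=0$ forces $h_x<0$ on $(0,\infty)$ and $h_x(0)\le 0$. In particular, if $h_x(0)\ge 0$ then necessarily $C(x)<0$, so a critical point $y^*(x)$ exists. With this dichotomy:
\begin{itemize}
\item For $x\in(0,\eta_0)\cup(\eta_1,+\infty)$ we have $h_x(0)<0$, and in either monotonicity pattern $h_x$ stays $<0$ on $(0,\infty)$.
\item For $x\in\{\eta_0,\eta_1\}$ we have $h_x(0)=0$, forcing $C(x)<0$; then $h_x$ dips below $0$ and returns to $0$ at $+\infty$, so $h_x<0$ on $(0,\infty)$.
\item For $x\in(\eta_0,\eta_1)$ we have $h_x(0)>0$, again forcing $C(x)<0$; now $h_x$ decreases from a positive value to a negative minimum at $y^*(x)$ and rises back to $0$, so the equation $h_x(y)=0$ has a unique solution $Y_0(x)\in(0,y^*(x))$, and at this zero $h_x'(Y_0(x))<0$, which is exactly \eqref{H_partial_y<0_on_C}.
\end{itemize}
The $C^1$ regularity of $Y_0$ on $(\eta_0,\eta_1)$ then follows from the implicit function theorem applied to $H(x,y)=0$ using $\partial H/\partial y(x,Y_0(x))<0$. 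For the endpoint limits, fix any $\varepsilon>0$; by the boundary case $H(\eta_0,\varepsilon)<0$, so continuity of $H$ gives $H(x,\varepsilon)<0$ for $x$ sufficiently close to $\eta_0^+$, while $H(x,0)>0$ for such $x$, and the intermediate value theorem combined with uniqueness of $Y_0(x)$ in $(0,y^*(x))$ forces $Y_0(x)<\varepsilon$; the same argument works at $\eta_1^-$.

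The main obstacle is the algebraic step producing the factorization $\partial H/\partial y = 4xy\,\widetilde N(x,y^2)/D(x,y)$ together with the verification that the leading and middle coefficients of $\widetilde N$ in $u=y^2$ are both positive under the hypothesis $a>sb$; without this structural fact the qualitative behavior of $h_x$ is not controlled, and the uniqueness of $Y_0(x)$ together with the sign of $\partial H/\partial y$ along the curve could fail. Everything else (case analysis, IFT, and the continuity argument for endpoint limits) is then routine.
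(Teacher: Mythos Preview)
Your proposal is correct and follows essentially the same route as the paper: the same factorization of $\partial H/\partial y$ as $4xy$ times a quadratic in $y^2$ with positive leading and middle coefficients, the same monotonicity dichotomy for $y\mapsto H(x,y)$, and the same case split on the sign of $H(x,0)$, followed by the implicit function theorem for $C^1$ regularity. The only noteworthy difference is the endpoint limits \eqref{Y_0_is_continuous_at_end_points}: the paper invokes the implicit function theorem a second time at $(\eta_j,0)$ using $\partial H/\partial x(\eta_j,0)\neq 0$ from Lemma~\ref{H(x,0)}, whereas you use the more elementary continuity argument $H(\eta_j,\varepsilon)<0 \Rightarrow H(x,\varepsilon)<0$ for $x$ near $\eta_j$, combined with uniqueness of the zero $Y_0(x)$; both are clean and short.
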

	
	\begin{proof}
		A straightforward calculation shows that
		\[
		\frac{\partial H}{\partial y}(x,y) =\frac{4xy\cdot\Big((a-sb)y^4 + c_1(x)y^2 +c_2(x)\Big)}{|z-1|^2|z+1|^2|z-1-s|^2|z+1+s|^2},
		\]
		where $z=x+iy$ and 
		\begin{align*}
			c_1(x) &= 2(a-sb)x^2+2(1+s)(a+sa+sb),\\
			c_2(x) &= (a+b)\big(x^2-(1+s)^2\big)^2 - b(1+s)(x^2-1)^2.
		\end{align*}
		Fix $x>0$. Since $a-sb>0$ and $c_1(x)>0$, the behavior of the function $y \mapsto H(x,y)$ on $[0,+\infty)$ has two possibilities, depending on whether $c_2(x) \geqslant 0$ or $c_2(x) < 0$, as follows.
		\begin{enumerate}
			\item[(P1)] The function $y \mapsto H(x,y)$ increases strictly on the interval $[0,+\infty)$; or
			\item[(P2)] There exists a real number $\xi(x)>0$ such that the function $y \mapsto H(x,y)$ decreases strictly on the interval $[0,\xi(x)]$ and increases strictly on the interval $[\xi(x),+\infty)$.
		\end{enumerate}
		
		If $x \in (0,\eta_0] \cup [\eta_1,+\infty)$, then we have $H(x,0) \leqslant 0$ (by \eqref{sign_of_H(x,0)}) and $\lim\limits_{y\to+\infty}H(x,y)=0$. 
		No matter which of (P1) or (P2) occurs, we always have $H(x,y)<0$ for any $y>0$.
		
		If $x \in (\eta_0,\eta_1)$, then we have $H(x,0) > 0$ (by \eqref{sign_of_H(x,0)}) and $\lim\limits_{y\to+\infty}H(x,y)=0$. 
		In this case, only (P2) can occur. We deduce that, there exists a real number $Y_0(x) \in (0,\xi(x))$ such that 
		\[
		H(x,y)
		\begin{cases}
			>0 &\text{if}\; 0<y<Y_0(x), \\
			=0 &\text{if}\; y=Y_0(x), \\
			<0 &\text{if}\; y>Y_0(x),
		\end{cases}
		\quad\text{and}\quad \frac{\partial H}{\partial y}(x,Y_0(x)) < 0.
		\]
		
		In summary, there exists a function $Y_0 \colon (\eta_0,\eta_1) \rightarrow \mathbb{R}_{>0}$ such that Equations \eqref{sign_of_H(x,y)} and \eqref{H_partial_y<0_on_C} hold. 
		Then, the implicit function theorem, together with Equations \eqref{sign_of_H(x,y)} and \eqref{H_partial_y<0_on_C} imply that the function $Y_0$ is $C^1$ smooth everywhere on $(\eta_0,\eta_1)$. 
		Finally, by Lemma \ref{H(x,0)} and the implicit function theorem, there exists a small $\varepsilon_0>0$ and two $C^1$ smooth functions $X_0,X_1 \colon (-\varepsilon_0,\varepsilon_0) \rightarrow \mathbb{R}_{>0}$ such that
		\begin{equation}
			\label{X_j}
			X_j(0)=\eta_j, \quad H(X_j(y),y)=0 \text{~for any~} y \in (-\varepsilon_0,\varepsilon_0), \quad j=0,1.
		\end{equation}
		Comparing \eqref{X_j} with \eqref{sign_of_H(x,y)}, we obtain \eqref{Y_0_is_continuous_at_end_points}.
	\end{proof}
	
	By Lemmas \ref{H(x,0)} and \ref{H(x,y)}, together with Equations \eqref{sym_of_H_1} and \eqref{sym_of_H_2}, we have completely determined the sign of $H(x,y)$. 
	See Figure \ref{fig:sign_of_H(x,y)}.
	
	\begin{figure}[h]
		\centering
		\begin{tikzpicture}
			\draw [thick] [->] (-7,0)--(7,0);
			\draw [red, thick] [->] (0,-3)--(0,3);
			\draw[red, domain=90:270, smooth, variable=\t,thick]
			plot ({2*cos(\t)+4}, {1*sin(\t)});
			\draw[red, domain=-90:90, smooth, variable=\t,thick]
			plot ({1.2*cos(\t)+4}, {1*sin(\t)});
			\draw[red, domain=-90:90, smooth, variable=\t,thick]
			plot ({2*cos(\t)-4}, {1*sin(\t)});
			\draw[red, domain=90:270, smooth, variable=\t,thick]
			plot ({1.2*cos(\t)-4}, {1*sin(\t)});
			\draw[fill] (1,0) circle [radius=0.05];
			\draw[fill] (-1,0) circle [radius=0.05];
			\draw[fill] (3,0) circle [radius=0.05];
			\draw[fill] (-3,0) circle [radius=0.05];
			\node [below] at (1,0) {$1$};
			\node [below] at (-1,0) {$-1$};
			\node [below] at (3,0) {$1+s$};
			\node [below] at (-3,0) {$-1-s$};
			\draw [thick] [->] (1.7,-0.5)--(1.95,-0.05);
			\node [below] at (1.7,-0.5) {$\eta_0$};
			\draw [thick] [->] (-1.7,-0.5)--(-1.95,-0.05);
			\node [below] at (-1.7,-0.5) {$-\eta_0$};
			\draw [thick] [->] (-5.5,-0.5)--(-5.25,-0.05);
			\node [below] at (-5.5,-0.5) {$-\eta_1$};
			\draw [thick] [->] (5.5,-0.5)--(5.25,-0.05);
			\node [below] at (5.5,-0.5) {$\eta_1$};
			\node [above] at (7,0) {$x$};
			\node [right] at (0,3) {$y$};
			\node [above] at (3.7,0) {$H(x,y)>0$};
			\node [above] at (5,2) {$H(x,y)<0$};
			\node [above] at (-3.7,0) {$H(x,y)<0$};
			\node [above] at (-5,2) {$H(x,y)>0$};
		\end{tikzpicture}
		\caption{sign of $H(x,y)$.}
		\label{fig:sign_of_H(x,y)}
	\end{figure}
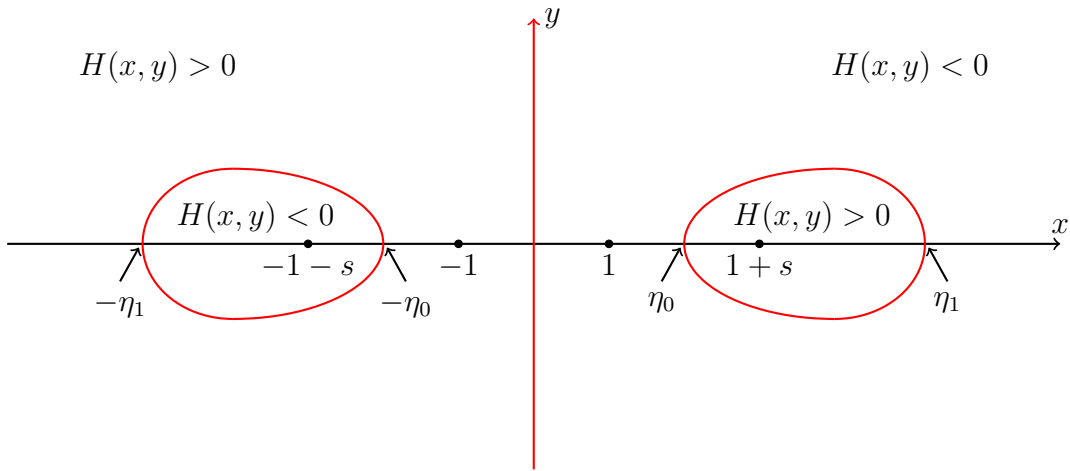
	
	\subsection{Imaginary part of $h(z)$.}
	
	By Definition \ref{def_h}, we have
	\begin{equation}
		\label{Im(h)}
		\operatorname{Im}(h(z))=(a+b)\big(\arg(z-1)-\arg(z+1)\big)+b\big(\arg(1+s+z)-\arg(1+s-z)\big)
	\end{equation}
	for any $z \in \mathbb{C}\setminus \big( (-\infty,1] \cup [1+s,+\infty) \big)$, where each $\arg(\cdot)$ takes values in $(-\pi,\pi)$. 
	
	\begin{lemma}
		\label{im(h(z))}
		Let $Y_0$ be the function defined in Lemma \ref{H(x,y)}. Then, the function $x \mapsto \operatorname{Im}(h(x+iY_0(x)))$ increases strictly on the interval $(\eta_0,\eta_1)$. Moreover, we have
		\begin{equation}
			\label{end_points_of_im(h)_on_C}
			\lim_{x\to\eta_0^+} \operatorname{Im}(h(x+iY_0(x))) = 0 \quad\text{and}\quad \lim_{x\to\eta_1^-} \operatorname{Im}(h(x+iY_0(x))) = b\pi.
		\end{equation}
	\end{lemma}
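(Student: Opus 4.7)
The plan is to set $\phi(x) := \operatorname{Im}(h(x+iY_0(x)))$ for $x\in(\eta_0,\eta_1)$ and to show $\phi'(x)>0$ throughout this interval by combining the Cauchy--Riemann equations with implicit differentiation of the relation $H(x,Y_0(x))=0$. Writing $u=\operatorname{Re}(h)=H$ and $v=\operatorname{Im}(h)$, the chain rule gives $\phi'(x)=v_x+v_y\,Y_0'(x)$; Cauchy--Riemann converts this into $\phi'(x)=-u_y+u_x\,Y_0'(x)$, while differentiating $H(x,Y_0(x))=0$ yields $Y_0'(x)=-u_x/u_y$. Substituting, one gets the clean identity
\[
\phi'(x)=-\frac{u_x^2+u_y^2}{u_y}\bigg|_{z=x+iY_0(x)}=-\frac{|h'(z)|^2}{(\partial H/\partial y)(x,Y_0(x))}.
\]
Since \eqref{H_partial_y<0_on_C} gives $\partial H/\partial y<0$ along the curve, strict monotonicity reduces to checking that $h'(z)\neq 0$ for $z$ in the upper half-plane.

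For this, I would compute $h'(z)$ directly:
\[
h'(z)=\frac{2(a+b)}{z^{2}-1}+\frac{2b(1+s)}{(1+s)^{2}-z^{2}},
\]
and clear denominators to find that the equation $h'(z)=0$ reduces to
\[
(a-sb)z^{2}=(1+s)\bigl(a+s(a+b)\bigr).
\]
Because $a>sb$ by hypothesis, the right-hand side is positive, so the two solutions of $h'(z)=0$ are real (they are in fact $\pm\sqrt{(1+s)(a+s(a+b))/(a-sb)}$, the real critical abscissae appearing in Lemma \ref{H(x,0)}). In particular $h'(z)\neq 0$ whenever $\operatorname{Im}z>0$, and hence $\phi'(x)>0$ for every $x\in(\eta_0,\eta_1)$.

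It remains to establish the boundary values \eqref{end_points_of_im(h)_on_C}. By \eqref{Y_0_is_continuous_at_end_points}, the points $x+iY_0(x)$ approach $\eta_0$ (resp.\ $\eta_1$) through the upper half-plane as $x\to\eta_0^+$ (resp.\ $x\to\eta_1^-$). At $\eta_0\in(1,1+s)$ all four arguments $\arg(z-1), \arg(z+1), \arg(1+s+z), \arg(1+s-z)$ tend to $0$, so by \eqref{Im(h)} the imaginary part of $h$ tends to $0$. At $\eta_1\in(1+s,+\infty)$, three of these arguments again tend to $0$, but $1+s-z$ approaches the negative real axis \emph{from below} (since $\operatorname{Im}(1+s-z)=-Y_0(x)<0$), so its principal argument tends to $-\pi$, while $\arg(1+s+z)\to 0$. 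Plugging into \eqref{Im(h)} gives
\[
\lim_{x\to\eta_1^-}\operatorname{Im}(h(x+iY_0(x)))=b\bigl(0-(-\pi)\bigr)=b\pi,
\]
which is the claimed limit.

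The computation in the first step is essentially a formal manipulation, and the boundary analysis is a direct verification. The only substantive point is the factorization of $h'(z)$ and the observation that its roots are real; this is a one-line algebraic calculation once the partial fractions of $h'$ are combined, and it is what makes the argument genuinely simpler than Zudilin's original treatment, which needed extra hypotheses to rule out complex zeros of the derivative.
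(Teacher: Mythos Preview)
Your proof is correct and follows essentially the same Cauchy--Riemann/implicit-differentiation route as the paper. One small remark: the separate verification that $h'(z)\neq 0$ in the upper half-plane is unnecessary, since \eqref{H_partial_y<0_on_C} already gives $u_y<0$ strictly along the curve, so $|h'|^2=u_x^2+u_y^2\geqslant u_y^2>0$ automatically; the paper simply rewrites the same quantity as $\phi'(x)=-(1+Y_0'(x)^2)\,u_y$, which is manifestly positive without any further input.
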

	
	\begin{proof}
		Write $z=x+iy$ and $h(z)=u(x,y)+iv(x,y)$. By the Cauchy--Riemann equation, we have
		\[
		\frac{\partial u}{\partial x} = \frac{\partial v}{\partial y} \quad\text{and}\quad \frac{\partial u}{\partial y} = - \frac{\partial v}{\partial x}.
		\]
		By Equation \eqref{sign_of_H(x,y)}, we have $u(x,Y_0(x)) = 0$ for any $x \in (\eta_0,\eta_1)$, and hence
		\[
		\frac{\partial u}{\partial x}(x,Y_0(x)) + \frac{\partial u}{\partial y}(x,Y_0(x)) \cdot Y_0^{\prime}(x) = 0 \quad\text{for any~} x \in (\eta_0,\eta_1).
		\]
		Therefore, we have
		\begin{align}
			&~\frac{\de}{\de x} \operatorname{Im}(h(x+iY_0(x))) = \frac{\de}{\de x} v(x,Y_0(x)) = \frac{\partial v}{\partial x} + \frac{\partial v}{\partial y} \cdot Y_0^{\prime} \notag\\
			=&~ -\frac{\partial u}{\partial y} + \frac{\partial u}{\partial x} \cdot Y_0^{\prime} 
			= -\frac{\partial u}{\partial y} + \left( -\frac{\partial u}{\partial y}\cdot Y_0^{\prime} \right) \cdot Y_0^{\prime} = -(1+(Y_0^{\prime})^2) \cdot \frac{\partial u}{\partial y}. \label{651}
		\end{align}
		By Equations \eqref{651} and \eqref{H_partial_y<0_on_C}, we obtain 
		\[
		\frac{\de}{\de x} \operatorname{Im}(h(x+iY_0(x))) > 0 \quad\text{for any~} x \in (\eta_0,\eta_1).
		\]
		Finally, the limits in \eqref{end_points_of_im(h)_on_C} follow from Equations \eqref{Y_0_is_continuous_at_end_points}, \eqref{Im(h)}, and the fact $1<\eta_0<1+s<\eta_1$.
	\end{proof}
	
	\begin{lemma}
		\label{psi(y)}
		The function $y \mapsto \operatorname{Im}(h(iy))$ decreases strictly on the interval $(0,+\infty)$. 
		Moreover, we have
		\begin{equation}
			\label{661}
			\lim_{y\to0^+} \operatorname{Im}(h(iy))=(a+b)\pi \quad\text{and}\quad \lim_{y\to+\infty} \operatorname{Im}(h(iy))=b\pi.
		\end{equation}
	\end{lemma}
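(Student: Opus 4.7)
The plan is to evaluate $\operatorname{Im}(h(iy))$ in closed form using the principal arguments of the four factors, then differentiate and use the assumption $a>sb$.

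First I would compute each argument explicitly. For $y>0$, writing $\log w = \log|w| + i\arg w$ with $\arg w \in (-\pi,\pi)$, I have
\begin{align*}
\arg(iy-1) - \arg(iy+1) &= (\pi-\arctan y) - \arctan y = \pi - 2\arctan y, \\
\arg(1+s+iy) - \arg(1+s-iy) &= 2\arctan\!\left(\frac{y}{1+s}\right).
\end{align*}
Plugging into \eqref{Im(h)} gives the clean expression
\[
\operatorname{Im}(h(iy)) = (a+b)\bigl(\pi - 2\arctan y\bigr) + 2b\arctan\!\left(\frac{y}{1+s}\right), \qquad y>0.
\]
The two limits in \eqref{661} follow immediately from $\arctan 0 = 0$ and $\arctan(\pm\infty)=\pm\pi/2$.

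Next I would differentiate:
\[
\frac{\de}{\de y}\operatorname{Im}(h(iy)) = -\frac{2(a+b)}{1+y^2} + \frac{2b(1+s)}{(1+s)^2+y^2}.
\]
To show this is strictly negative for every $y>0$, I clear denominators: the inequality is equivalent to
\[
b(1+s)(1+y^2) < (a+b)\bigl((1+s)^2+y^2\bigr),
\]
which rearranges to
\[
(a-sb)\,y^2 + (1+s)\bigl(a(1+s)+bs\bigr) > 0.
\]
Since $a,b,s>0$ and $a>sb$ by hypothesis, both summands are strictly positive, and the inequality holds for all $y>0$. Hence $y \mapsto \operatorname{Im}(h(iy))$ is strictly decreasing on $(0,+\infty)$.

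There is no real obstacle here; the only mild care required is in choosing the correct branch of $\arg$ for each of the four factors at $z=iy$ (in particular, $\arg(iy-1)$ lies in the second quadrant and equals $\pi-\arctan y$, not $-\arctan y$), and in verifying that the monotonicity inequality really uses the full strength of the standing assumption $a>sb$ from Definition \ref{def_h}.
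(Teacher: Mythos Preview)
Your proof is correct and essentially identical to the paper's: the paper writes the same expression as $2(a+b)\arctan(1/y)+2b\arctan(y/(1+s))$ (which equals your $(a+b)(\pi-2\arctan y)+2b\arctan(y/(1+s))$ via $\arctan y+\arctan(1/y)=\pi/2$), differentiates, and simplifies the numerator to exactly your expression $(a-sb)y^2+(1+s)(a(1+s)+bs)$.
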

	
	\begin{proof}
		For any $y \in (0,+\infty)$, we have
		\begin{equation}
			\label{662}
			\operatorname{Im}(h(iy)) = 2(a+b)\arctan \frac{1}{y} + 2b \arctan \frac{y}{1+s}
		\end{equation}
		and
		\[
		\frac{\de}{\de y} \operatorname{Im}(h(iy)) = -2\frac{(a-sb)y^2+(1+s)(a+sa+sb)}{(1+y^2)((1+s)^2+y^2)}<0.
		\]
		The limits in \eqref{661} follow from \eqref{662}.
	\end{proof}
	
	\subsection{Distribution of the solutions}
	
	In this subsection, we determine the solutions of the equation $h(z)=\lambda \pi i$ for any fixed $\lambda \in \mathbb{R}$. We consider not only solutions in the domain $\mathbb{C}\setminus \big( (-\infty,1] \cup [1+s,+\infty) \big)$ of $h(z)$, but also solutions on the upper or lower bank of the cuts $(-\infty,1]$ and $[1+s,+\infty)$.
	
	\begin{lemma}
		\label{roots_of_h}
		Fix $\lambda\in\mathbb{R}$. 
		Then, the equation
		\begin{equation}
			\label{h=lambda}
			h(z)=\lambda\pi i 
		\end{equation}
		has the following solutions\textup{:}
		\begin{enumerate}
			\item[\textup{(1)}] For $\lambda = 0$, there is a pair of real solutions $-\eta_0 \pm i0$ and a solution $\eta_0$, where $+(-)$ in $\pm i0$ corresponds to the upper \textup{(}lower\textup{)} bank of the cut $(-\infty,1]$\textup{;}
			
			\item[\textup{(2)}] For $\lambda=\pm b$, there is a pairs of real solutions $-\eta_1\pm i0$ and $\eta_1\pm i0$, where $+(-)$ in $\pm i0$ coincides with the sign of $\lambda$ and corresponds to the upper \textup{(}lower\textup{)} banks of the cuts $(-\infty, 1]$ and $[1+s, +\infty)$\textup{;}
			
			\item[\textup{(3)}] For $\lambda=\pm(a+b)$, there is a real solution $\pm i0$, where $+(-)$ in $\pm i0$ coincides with the sign of $\lambda$ and corresponds to the upper \textup{(}lower\textup{)} bank of the cut $(-\infty,1]$\textup{;}
			
			\item[\textup{(4)}] For the real $\lambda$ such that $b<|\lambda|<a+b$, there is a purely imaginary solution\textup{;}
			
			\item[\textup{(5)}] For the real $\lambda$ such that $0<|\lambda|<b$, there is a pair of non-real solutions symmetric with respect to the line $\mathrm{Re}(z)=0$\textup{;}
			
			\item[\textup{(6)}] For the real $\lambda$ such that $|\lambda|>a+b$, there is no solution.
		\end{enumerate}		
		All solutions of Equation \eqref{h=lambda} appear in the list above. 
		All solutions of Equation \eqref{h=lambda} corresponding to positive $\lambda$ are contained in the half-plane $\operatorname{Im}(z)>0$. 
		All solutions of Equation \eqref{h=lambda} corresponding to negative $\lambda$ are contained in the half-plane $\operatorname{Im}(z)<0$. 
	\end{lemma}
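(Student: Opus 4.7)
The plan is to combine the level-set analysis of $\operatorname{Re}(h)$ provided by Lemmas \ref{H(x,0)} and \ref{H(x,y)} with the monotonicity results of Lemmas \ref{im(h(z))} and \ref{psi(y)}, and use two symmetries of $h$ to reduce the case analysis to the open upper half-plane with $\lambda\geq 0$. First I would record the symmetries: since $\overline{\log w}=\log\bar w$ off the cut, we obtain $h(\bar z)=\overline{h(z)}$ for every $z$ in the domain of $h$; and for $z\in\mathbb{C}\setminus\mathbb{R}$, a direct computation using $\log(-w)=\log w\mp i\pi$ according to $\operatorname{sgn}(\operatorname{Im} w)$ shows that the $\pi i$ shifts cancel in pairs to give $h(-z)=-h(z)$. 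Consequently, the map $z\mapsto -\bar z$ preserves the solution set of $h(z)=\lambda\pi i$, while conjugation exchanges the $\lambda>0$ and $\lambda<0$ cases.

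Next I would describe the zero set of $H(x,y)=\operatorname{Re}(h(x+iy))$ in the open upper half-plane. By Lemma \ref{H(x,y)} together with the symmetries $H(-x,y)=-H(x,y)$ and $H(0,y)=0$, this zero set is the disjoint union of three smooth components: the positive imaginary axis $I^+:=\{iy:y>0\}$, the arc $\Gamma^+:=\{x+iY_0(x):x\in(\eta_0,\eta_1)\}$, and its reflection $\Gamma^-:=\{-x+iY_0(x):x\in(\eta_0,\eta_1)\}$ (the image of $\Gamma^+$ under $z\mapsto-\bar z$). Every non-real solution of $h(z)=\lambda\pi i$ with $\operatorname{Im}(z)>0$ must lie on one of these three components.

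I would then apply the intermediate value theorem on each component. By Lemma \ref{psi(y)}, $y\mapsto\operatorname{Im}(h(iy))$ is a strictly decreasing bijection $(0,+\infty)\to(b\pi,(a+b)\pi)$, so it attains $\lambda\pi$ exactly once for $b<\lambda<a+b$; this gives part (4). By Lemma \ref{im(h(z))}, $x\mapsto\operatorname{Im}(h(x+iY_0(x)))$ is a strictly increasing bijection $(\eta_0,\eta_1)\to(0,b\pi)$, so it attains $\lambda\pi$ exactly once for $0<\lambda<b$, and the symmetry $z\mapsto-\bar z$ contributes a matching solution on $\Gamma^-$; this gives part (5). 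Since the union of ranges is exhausted by $(0,b\pi)\cup\{b\pi\}\cup(b\pi,(a+b)\pi)$, no solution in the open upper half-plane exists for $\lambda>a+b$, which together with conjugation gives part (6).

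Finally, the boundary cases $\lambda\in\{0,b,a+b\}$ correspond to the endpoints of the three arcs, handled via Equations \eqref{Y_0_is_continuous_at_end_points} and \eqref{end_points_of_im(h)_on_C} together with the endpoint limits in Lemma \ref{psi(y)}. The endpoint $x\to\eta_0^+$ on $\Gamma^+$ lands at the real non-cut point $\eta_0$, and on $\Gamma^-$ at $-\eta_0+i0^+$ on the upper bank of $(-\infty,1]$, yielding part (1) after conjugation. The endpoint $x\to\eta_1^-$ on $\Gamma^\pm$ lands at $\pm\eta_1+i0^+$ on the upper banks of the two cuts, yielding part (2) for $\lambda=+b$. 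The limit $y\to 0^+$ on $I^+$ lands at $0+i0^+$, yielding part (3) for $\lambda=+(a+b)$. The main technical obstacle I anticipate is the correct bookkeeping of upper versus lower cut banks in these boundary cases, since each bank contributes a different $\pm\pi i$ shift from the principal logarithm; I would settle this by a short direct computation of $\operatorname{Im}(h(x\pm i0))$ on each cut segment $(-\infty,-1-s)$, $(-1-s,-1)$, $(-1,1)$, and $(1+s,+\infty)$, which simultaneously verifies the claimed bank signs and rules out spurious solutions elsewhere on the cuts.
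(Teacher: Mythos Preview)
Your proposal is correct and follows essentially the same approach as the paper's proof: identify the candidate set via $\operatorname{Re}(h(z))=0$ using Lemmas~\ref{H(x,0)} and~\ref{H(x,y)} and the symmetries of $H$, then pin down $\operatorname{Im}(h)$ on each component via Lemmas~\ref{im(h(z))} and~\ref{psi(y)}. The paper's proof is extremely terse (it simply lists the candidates and declares the verification ``straightforward''), while you supply the details, including the explicit symmetries $h(\bar z)=\overline{h(z)}$ and $h(-z)=-h(z)$ and the endpoint and cut-bank bookkeeping, but the underlying argument is the same.
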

	
	\begin{proof}
		Any solution $z$ of Equation \eqref{h=lambda} satisfies $\operatorname{Re}(h(z)) = 0$. By Lemmas \ref{H(x,0)} and \ref{H(x,y)}, together with Equations \eqref{sym_of_H_1} and \eqref{sym_of_H_2}, the only candidates are (see Figure \ref{fig:sign_of_H(x,y)})
		\begin{itemize}
			\item $z = \pm x \pm iY_0(x)$ for some $x \in (\eta_0,\eta_1)$;
			\item $z= \pm iy$ for some $y>0$;
			\item $z=\eta_0$, $\eta_1 \pm i0$, $\pm i0$, $-\eta_0 \pm i0$, $-\eta_1 \pm i0$.
		\end{itemize}
		Then, considering Lemmas \ref{im(h(z))}, \ref{psi(y)}, and the symmetry of $h(z)$, it is straightforward to verify that (1)--(6) exhaust all solutions of Equation \eqref{h=lambda}. 
	\end{proof}
	
	\subsection{Further properties of the function $Y_0(x)$}
	
	In this subsection, we establish some further properties of the function $Y_0(x)$ defined in Lemma \ref{H(x,y)}. 
	These properties will be used in the next section.
	
	\begin{lemma}
		\label{y'(x)=0}
		If $x_0 \in (\eta_0,\eta_1)$ satisfies $Y_0^{\prime}(x_0) = 0$, then $x_0$ is the unique solution of equation $Y_0(x)=Y_0(x_0)$ within the range of $x \in (\eta_0,\eta_1)$.
	\end{lemma}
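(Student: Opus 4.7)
The plan is to argue by contradiction, combining an explicit degree bound for $\operatorname{Re}(h'(x+iy_0))$ with a Rolle-type zero count. Suppose, in addition to $Y_0'(x_0)=0$, there is $x_1\in(\eta_0,\eta_1)$ with $x_1\neq x_0$ and $Y_0(x_1)=Y_0(x_0)=:y_0$; without loss of generality $x_0<x_1$, and set $z_i:=x_i+iy_0$.

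First I would translate the critical-point condition. Implicit differentiation of $H(x,Y_0(x))\equiv 0$, together with $\partial_y H(x,Y_0(x))<0$ from \eqref{H_partial_y<0_on_C}, gives $Y_0'(x)=-\partial_xH/\partial_yH$, so $Y_0'(x_0)=0$ is equivalent to $\partial_xH(x_0,y_0)=\operatorname{Re}(h'(z_0))=0$.

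Second, I would compute $h'$ explicitly and extract a degree bound. A direct calculation of the logarithmic derivative of $h$ (combining the four log-terms over a common denominator) yields
\[
h'(z)=\frac{-2(a-sb)(z^2-z_*^2)}{(z^2-1)\bigl((1+s)^2-z^2\bigr)}=\frac{\alpha}{z^2-1}+\frac{\beta}{z^2-(1+s)^2},
\]
with $z_*^2:=(1+s)(a(1+s)+bs)/(a-sb)>(1+s)^2>1$ and partial-fraction constants satisfying $\alpha>0$, $\beta<0$, $\alpha+\beta=2(a-sb)>0$. Substituting $z=x+iy_0$ and clearing the (strictly positive) denominators, $\operatorname{Re}(h'(x+iy_0))$ becomes, as a rational function of $t:=x^2$, a polynomial of degree exactly $3$ over a positive denominator; the leading coefficient in $t^3$ is $\alpha+\beta\neq 0$. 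Hence $\operatorname{Re}(h'(x+iy_0))=0$ has at most three positive real solutions in $x$.

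Third, I would count zeros of $\phi(x):=\operatorname{Re}(h(x+iy_0))$ using Rolle's theorem. The routine ingredients are: $\phi(0)=0$ by the symmetry $H(0,y)=0$; $\phi'(0)=h'(iy_0)=-\alpha/(1+y_0^2)+|\beta|/((1+s)^2+y_0^2)<0$, since both $\alpha>|\beta|$ and $1/(1+y_0^2)>1/((1+s)^2+y_0^2)$; $\phi(\eta_1)<0$ by \eqref{sign_of_H(x,y)}; and $\phi(x)\to 0^-$ as $x\to+\infty$ from the expansion $h(z)=ib\pi-2(a-sb)/z+O(|z|^{-2})$ in the upper half-plane (cf.\ the computation in Lemma \ref{lem_asym_f_g}). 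By hypothesis $\phi(x_0)=\phi(x_1)=0$ and $\phi'(x_0)=0$. I would then exhibit four distinct positive real zeros of $\phi'(x)=\operatorname{Re}(h'(x+iy_0))$:
\begin{itemize}
\item $\widetilde{x}\in(0,x_0)$, by Rolle's theorem on $[0,x_0]$;
\item $x_0$ itself, from $\phi'(x_0)=0$;
\item $x_*\in(x_0,x_1)$, by Rolle's theorem on $[x_0,x_1]$;
\item $x_\dagger\in(x_1,+\infty)$, obtained from $\phi(x_1)=0$ and $\phi(x)\to 0^-$ at infinity together with the real-analyticity of $\phi$ (which excludes $\phi\equiv 0$ on $(x_1,+\infty)$): $\phi$ then attains an interior extremum in $(x_1,+\infty)$ — compact-set argument using $\phi\to 0$ at both ends of this interval — at which $\phi'$ vanishes.
\end{itemize}
These four zeros contradict the bound of three from the second step, completing the proof.

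The main obstacle is carrying out the algebra in the second step cleanly enough to confirm that the numerator polynomial has degree exactly $3$ (not less); if the $t^3$ coefficient $\alpha+\beta$ were to vanish one would get a weaker bound. A secondary point requiring care is the edge case $\phi'(x_1)=0$: there $x_1$ is itself a zero of $\phi'$, giving a fifth zero and only reinforcing the contradiction; similarly, if $\phi$ has further sign changes in $(x_0,x_1)$ beyond those forced above (which happens when $x_0$ is a local minimum rather than a local maximum of $Y_0$), extra Rolle zeros only strengthen the argument.
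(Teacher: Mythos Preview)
Your proposal is correct and follows essentially the same route as the paper: both arguments translate $Y_0'(x_0)=0$ into $\partial_x H(x_0,y_0)=0$, observe that $\partial_x H(x,y_0)$ is a cubic in $t=x^2$ over a positive denominator with leading coefficient $2(a-sb)\neq 0$, and then use the zeros $H(0,y_0)=H(x_0,y_0)=H(x_1,y_0)=0$ together with $H(x,y_0)\to 0$ at infinity to force, via Rolle, a fourth positive zero of that cubic beyond the three it can have. Your version is a bit more explicit (the partial-fraction form of $h'$ and the handling of the edge case $\phi'(x_1)=0$), and you record a couple of facts---$\phi'(0)<0$ and $\phi(\eta_1)<0$---that are not actually needed for the zero count, but the core argument matches the paper's.
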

	
	\begin{proof}
		Suppose that $x_0 \in (\eta_0,\eta_1)$ satisfies $Y_0^{\prime}(x_0) = 0$; let $y_0 = Y_0(x_0)$.
		By \eqref{sign_of_H(x,y)}, we have $H(x,Y_0(x)) = 0$ for any $x \in (\eta_0,\eta_1)$, and hence
		\begin{equation}
			\label{681}
			\frac{\partial H}{\partial x}(x_0,y_0) + \frac{\partial H}{\partial y}(x_0,y_0) \cdot Y_0^{\prime}(x_0) = 0 \Longrightarrow \frac{\partial H}{\partial x}(x_0,y_0) = 0.
		\end{equation}
		By a straightforward calculation using Definition \ref{def_H(x,y)}, we have
		\[
		\frac{\partial H}{\partial x}(x,y_0) = \frac{Q(x^2)}{|z-1|^2|z+1|^2|z-1-s|^2|z+1+s|^2},
		\]
		where $z=x+iy_0$ and $Q(t)$ is a polynomial of the form $Q(t) = 2(a-sb)t^3 + \cdots$. 
		Note that $\deg Q = 3$.
		
		Note that $H(0,y_0) = 0$, $H(x_0,y_0) =0$, and $\lim\limits_{x\to+\infty} H(x,y_0) = 0$. 
		If there exists a real number $x_1 \in (\eta_0,\eta_1) \setminus \{ x_0 \}$ such that $Y_0(x_1)=y_0$, then $H(x_1,y_0) = 0$ and Rolle's theorem implies that
		\begin{itemize}
			\item If $x_1 < x_0$, then the function $x \mapsto \frac{\partial H}{\partial x}(x,y_0)$ has at least one zero in each of the open intervals $(0,x_1)$, $(x_1,x_0)$, and $(x_0,+\infty)$;
			\item If $x_0<x_1$, then the function $x \mapsto \frac{\partial H}{\partial x}(x,y_0)$ has at least one zero in each of the open intervals $(0,x_0)$, $(x_0,x_1)$, and $(x_1,+\infty)$.
		\end{itemize}
		Since $x_0$ is also a zero of the function $x \mapsto \frac{\partial H}{\partial x}(x,y_0)$ by \eqref{681}, we deduce that the polynomial $Q(t)$ has at least four distinct zeros in $(0,+\infty)$, a contradiction. 
		In conclusion, $x_0$ is the unique solution of equation $Y_0(x)=y_0$ within the range of $x \in (\eta_0,\eta_1)$.
	\end{proof}
	
	\begin{lemma}
		\label{y'+-}
		There exists a unique $x_0 \in (\eta_0,\eta_1)$ such that
		\[
		Y_0^{\prime}(x)
		\begin{cases}
			>0 &\text{if}\; x \in (\eta_0,x_0),\\
			=0 &\text{if}\; x = x_0,\\
			<0 &\text{if}\; x \in (x_0,\eta_1).
		\end{cases}
		\]
	\end{lemma}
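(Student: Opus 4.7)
The plan is to combine the boundary behavior of $Y_0$ from Lemma \ref{H(x,y)} with the rigidity result of Lemma \ref{y'(x)=0}, which says that any critical point of $Y_0$ is attained at a unique level. Existence of $x_0$ will come from the extreme value theorem, and uniqueness (together with the sign pattern of $Y_0'$) will follow from an intermediate-value-theorem argument ruling out any second critical point.

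First I would establish existence. By Lemma \ref{H(x,y)}, the function $Y_0$ is continuous on $(\eta_0,\eta_1)$, strictly positive there, and satisfies $\lim_{x\to\eta_0^+} Y_0(x) = \lim_{x\to\eta_1^-} Y_0(x) = 0$. Extending $Y_0$ by $0$ at the endpoints yields a continuous function on the compact interval $[\eta_0,\eta_1]$, which attains its maximum at some interior point $x_0 \in (\eta_0,\eta_1)$ because $Y_0 > 0$ inside. Since $Y_0$ is $C^1$, we have $Y_0'(x_0)=0$.

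Next I would establish uniqueness of the critical point. Suppose towards a contradiction that there is another $x_1 \in (\eta_0,\eta_1) \setminus \{x_0\}$ with $Y_0'(x_1)=0$. By Lemma \ref{y'(x)=0}, $x_1$ is the unique preimage of the value $Y_0(x_1)$ in $(\eta_0,\eta_1)$. Since $x_0 \neq x_1$, we have $Y_0(x_0) \neq Y_0(x_1)$, and by maximality $Y_0(x_1) < Y_0(x_0)$. Without loss of generality assume $x_1 < x_0$. Then on the subinterval $[x_0,\eta_1)$ the function $Y_0$ is continuous with $Y_0(x_0) > Y_0(x_1)$ and $\lim_{x\to\eta_1^-} Y_0(x) = 0 < Y_0(x_1)$, so the intermediate value theorem produces some $x_2 \in (x_0,\eta_1)$ with $Y_0(x_2) = Y_0(x_1)$. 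This contradicts the uniqueness clause of Lemma \ref{y'(x)=0}. The symmetric argument handles $x_1 > x_0$. Hence $x_0$ is the unique critical point.

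Finally I would determine the signs. Since $Y_0'$ is continuous on $(\eta_0,\eta_1)$ and vanishes only at $x_0$, it has constant sign on each of the two subintervals $(\eta_0,x_0)$ and $(x_0,\eta_1)$. Because $Y_0(\eta_0^+) = 0 < Y_0(x_0)$, the sign of $Y_0'$ on $(\eta_0,x_0)$ must be positive; similarly, $Y_0(x_0) > 0 = Y_0(\eta_1^-)$ forces $Y_0' < 0$ on $(x_0,\eta_1)$. No step here is delicate: the only place where care is needed is invoking Lemma \ref{y'(x)=0} correctly to rule out a second critical point, so that will be the main (but modest) obstacle.
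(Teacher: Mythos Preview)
Your proof is correct and follows essentially the same approach as the paper: existence of $x_0$ via the extreme value theorem at an interior maximum, uniqueness of the critical point by combining Lemma~\ref{y'(x)=0} with the intermediate value theorem, and determination of the signs from continuity of $Y_0'$ together with the boundary limits. The only cosmetic difference is that the paper phrases the uniqueness step as ``every $\hat{x}\neq x_0$ has $Y_0'(\hat{x})\neq 0$'' (using the IVT to produce a second preimage of $Y_0(\hat{x})$), whereas you argue by contradiction from a hypothetical second critical point; these are logically equivalent formulations of the same argument.
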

	
	\begin{proof}
		By Equation \eqref{Y_0_is_continuous_at_end_points} and the fact $Y_0 \in C^{1}((\eta_0,\eta_1),\mathbb{R}_{>0})$, there exists a maximum point $x_0 \in (\eta_0,\eta_1)$ of the function $Y_0$; we have $Y_0^{\prime}(x_0)=0$. Lemma \ref{y'(x)=0} implies that $x_0$ is the unique maximum point of the function $Y_0$. 
		For any $\hat{x} \in (\eta_0,\eta_1) \setminus \{ x_0 \}$, we have $0<Y_0(\hat{x})<Y_0(x_0)$. 
		By the intermediate value theorem, the equation $Y_0(x)=Y_0(\hat{x})$ has at least two solutions: one in $(\eta_0,x_0)$ and another in $(x_0,\eta_1)$. 
		Therefore, Lemma \ref{y'(x)=0} implies that $Y_0^{\prime}(\hat{x}) \neq 0$ for any $\hat{x} \in (\eta_0,\eta_1) \setminus \{ x_0 \}$. 
		Thus, the continuous function $Y_0^{\prime}$ does not change sign on each of the intervals $(\eta_0,x_0)$ and $(x_0,\eta_1)$. 
		Finally, Lagrange's mean value theorem implies that $Y_0^{\prime}(\xi_0) > 0$ and $Y_0^{\prime}(\xi_1)<0$ for some $\xi_0 \in (\eta_0,x_0)$ and $\xi_1 \in (x_0,\eta_1)$. 
		We conclude that $Y_0^{\prime}(x) > 0$ for any $x \in (\eta_0,x_0)$ and $Y_0^{\prime}(x)<0$ for any $x \in (x_0,\eta_1)$.
	\end{proof}
	
	\section{Asymptotic estimates}
	\label{Sect_asym_S_n}
	
	In this section, our goal is to investigate the asymptotic behavior of $S_n$ as $n \to +\infty$. 
	Throughout this section, we assume that $k,q,r,n$ are positive integers such that $k \geqslant 2$, $q\geqslant 3$, $r>2k$, and $n \in q!\mathbb{N}$. 
	The functions $f(z)$ and $g(z)$ are defined in Definition \ref{def_f_g}. 
	The function $g_n(z)$ is defined by \eqref{def_g_n}.
	
	Recall that the substitution $z = r(w-1)/2$ converts the function $f'(z)$ to the function $h(w)$, which we studied in \S \ref{Sect_h(z)}:
	\begin{align}
		&~f'(z) = f'\left(\frac{r(w-1)}{2}\right) \notag\\
		=&~(a+b)\big(\log(w-1)-\log(w+1)\big)+b\big(\log(1+s+w)-\log(1+s-w)\big), \notag\\
		=&~h(w),\quad\text{where}\ a=q,\ b=k,\ \text{and}\ s=2q/r. \label{f'_to_h}
	\end{align}
	By Substitution \eqref{f'_to_h} and using Lemmas \ref{H(x,0)}, \ref{H(x,y)}, \ref{im(h(z))}, and \ref{y'+-}, we obtain the following Lemma \ref{Y}.
	
	\begin{lemma}
		\label{Y}
		There exist a unique $\mu_0 \in (0,q)$, a unique $\mu_1 \in (q,+\infty)$, and a $C^{1}$ smooth function $Y\colon (\mu_0,\mu_1) \rightarrow \mathbb{R}_{>0}$ such that
		\begin{equation}
			\label{sign_of_Re_f'}
			\operatorname{Re}(f'(x+iy))
			\begin{cases}
				<0 &\text{if}\; x \in (-r/2,\mu_0] \cup [\mu_1,+\infty) \;\text{and}\; y > 0, \\
				<0 &\text{if}\; x \in (\mu_0,\mu_1) \;\text{and}\; y > Y(x), \\
				=0 &\text{if}\; x \in (\mu_0,\mu_1) \;\text{and}\; y = Y(x), \\
				>0 &\text{if}\; x \in (\mu_0,\mu_1) \;\text{and}\; 0< y < Y(x). \\
			\end{cases}
		\end{equation}
		We have
		\begin{equation}
			\label{Y_is_continuous_at_end_points}
			\lim_{x \to \mu_0^+} Y(x)=0, \quad \lim_{x \to \mu_1^-} Y(x)=0, 
		\end{equation}
		and
		\begin{equation}
			\label{Re(f')_partial_y<0_on_C}
			\frac{\partial u}{\partial y}(x,Y(x)) < 0 \quad \text{for any~} x\in (\mu_0,\mu_1),\text{~where~} u(x,y)=\operatorname{Re}(f'(x+iy)).
		\end{equation}
		There exists a unique $x_* \in (\mu_0,\mu_1)$ such that
		\begin{equation}
			\label{sign_of_Y'}
			Y^{\prime}(x)
			\begin{cases}
				>0 &\text{if}\; x \in (\mu_0,x_*),\\
				=0 &\text{if}\; x = x_*,\\
				<0 &\text{if}\; x \in (x_*,\mu_1).
			\end{cases}
		\end{equation}
		Moreover, we have $\lim\limits_{x \to \mu_0^+} \operatorname{Im}(f'(x+iY(x)))=0$, $\lim\limits_{x \to \mu_1^-} \operatorname{Im}(f'(x+iY(x)))=k\pi$, and
		\begin{equation}
			\label{Im(f')_increase_on_C}
			x \mapsto \operatorname{Im}(f'(x+iY(x))) \quad\text{increases strictly on~} (\mu_0,\mu_1).
		\end{equation}
	\end{lemma}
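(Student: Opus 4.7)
The plan is to deduce Lemma \ref{Y} from the results already established in Section \ref{Sect_h(z)} via the affine substitution $w = 2z/r + 1$, equivalently $z = r(w-1)/2$, which was hinted at in Equation \eqref{f'_to_h}. Setting $a = q$, $b = k$, $s = 2q/r$, the standing hypothesis $r > 2k$ translates into $a > sb$, so the $h$-theory of Section \ref{Sect_h(z)} applies; the cuts $(-\infty,0]$ and $[q,+\infty)$ for $f'(z)$ correspond bijectively to the cuts $(-\infty,1]$ and $[1+s,+\infty)$ for $h(w)$, and the computation in \eqref{f'_to_h} gives $f'(z) = h(w)$ on the common domain.

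Writing $w = x_w + i y_w$ with $x_w = 2x/r + 1$ and $y_w = 2y/r$, I would define
\[
\mu_0 := \tfrac{r}{2}(\eta_0 - 1), \quad \mu_1 := \tfrac{r}{2}(\eta_1 - 1), \quad Y(x) := \tfrac{r}{2}\, Y_0\!\left( \tfrac{2x}{r} + 1 \right), \quad x_* := \tfrac{r}{2}(x_0 - 1),
\]
where $\eta_0, \eta_1$ come from Lemma \ref{H(x,0)}, $Y_0$ from Lemma \ref{H(x,y)}, and $x_0$ from Lemma \ref{y'+-}. Since $\eta_0 \in (1, 1+s)$ and $\eta_1 \in (1+s, +\infty)$, we immediately obtain $\mu_0 \in (0, q)$ and $\mu_1 \in (q,+\infty)$; the $C^1$ smoothness of $Y$ inherits from that of $Y_0$. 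Uniqueness of $\mu_0, \mu_1, x_*$ follows from the corresponding uniqueness assertions for $\eta_0, \eta_1, x_0$.

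Now $\operatorname{Re}(f'(x+iy)) = \operatorname{Re}(h(x_w + i y_w))$, and the extension $H(x_w, y_w)$ of Definition \ref{def_H(x,y)} is built so as to agree with $\operatorname{Re}(h)$ on the domain. This lets me transfer the sign table \eqref{sign_of_H(x,y)} to \eqref{sign_of_Re_f'}, and the boundary vanishing \eqref{Y_0_is_continuous_at_end_points} to \eqref{Y_is_continuous_at_end_points}. The chain rule, together with positivity of the factor $r/2$, converts \eqref{H_partial_y<0_on_C} into \eqref{Re(f')_partial_y<0_on_C} and Lemma \ref{y'+-} into the sign table \eqref{sign_of_Y'} for $Y'(x)$. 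Finally, Lemma \ref{im(h(z))} applied on the curve $y_w = Y_0(x_w)$, combined with $b\pi = k\pi$, yields the limits $\lim_{x \to \mu_0^+} \operatorname{Im}(f'(x+iY(x))) = 0$ and $\lim_{x \to \mu_1^-} \operatorname{Im}(f'(x+iY(x))) = k\pi$, as well as the strict monotonicity \eqref{Im(f')_increase_on_C}, since the reparametrization $x \mapsto 2x/r+1$ is strictly increasing. There is no real obstacle here: the entire lemma is a change-of-variable translation of results already proved, and the main task is bookkeeping to verify that each sign, limit, and smoothness claim in Lemma \ref{Y} matches a companion claim established for $h$ in Section \ref{Sect_h(z)}.
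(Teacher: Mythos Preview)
Your proposal is correct and follows exactly the paper's approach: the paper's proof of Lemma \ref{Y} is a single sentence stating that the lemma follows from the substitution \eqref{f'_to_h} together with Lemmas \ref{H(x,0)}, \ref{H(x,y)}, \ref{im(h(z))}, and \ref{y'+-}, and you have faithfully spelled out that translation. The bookkeeping you describe (defining $\mu_j = r(\eta_j-1)/2$, $Y(x) = (r/2)Y_0(2x/r+1)$, $x_* = r(x_0-1)/2$, and transferring each sign table, limit, and monotonicity statement through the affine change of variables) is precisely what the paper leaves implicit.
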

	
	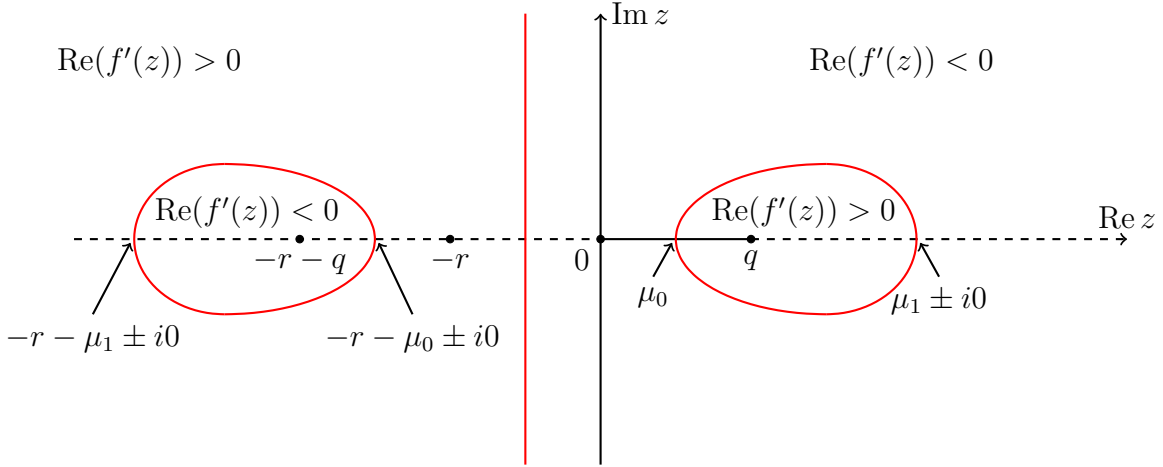
\begin{figure}[h]
		\centering
		\begin{tikzpicture}
			\draw[thick] [->] (1,-3)--(1,3);
			\draw[thick]  (1,0)--(3,0);
			\draw [dashed, thick] (-6,0)--(1,0);
			\draw [dashed, thick][->] (3,0)--(8,0);
			\draw [red, thick] (0,-3)--(0,3);
			\draw[red, domain=90:270, smooth, variable=\t,thick]
			plot ({2*cos(\t)+4}, {1*sin(\t)});
			\draw[red, domain=-90:90, smooth, variable=\t,thick]
			plot ({1.2*cos(\t)+4}, {1*sin(\t)});
			\draw[red, domain=-90:90, smooth, variable=\t,thick]
			plot ({2*cos(\t)-4}, {1*sin(\t)});
			\draw[red, domain=90:270, smooth, variable=\t,thick]
			plot ({1.2*cos(\t)-4}, {1*sin(\t)});
			\draw[fill] (1,0) circle [radius=0.05];
			\draw[fill] (-1,0) circle [radius=0.05];
			\draw[fill] (3,0) circle [radius=0.05];
			\draw[fill] (-3,0) circle [radius=0.05];
			\node [below left] at (1,0) {$0$};
			\node [below] at (-1,0) {$-r$};
			\node [below] at (3,0) {$q$};
			\node [below] at (-3,0) {$-r-q$};
			\draw [thick] [->] (1.7,-0.5)--(1.95,-0.05);
			\node [below] at (1.7,-0.5) {$\mu_0$};
			\draw [thick] [->] (-1.5,-1)--(-1.95,-0.05);
			\node [below] at (-1.5,-1) {$-r-\mu_0 \pm i0$};
			\draw [thick] [->] (-5.75,-1)--(-5.25,-0.05);
			\node [below] at (-5.75,-1) {$-r-\mu_1 \pm i 0$};
			\draw [thick] [->] (5.5,-0.5)--(5.25,-0.05);
			\node [below] at (5.5,-0.5) {$\mu_1 \pm i0$};
			\node [right] at (1,3) {$\operatorname{Im}z$};
			\node [above] at (8,0) {$\operatorname{Re}z$};
			\node [above] at (3.7,0) {$\operatorname{Re}(f'(z))>0$};
			\node [above] at (5,2) {$\operatorname{Re}(f'(z))<0$};
			\node [above] at (-3.7,0) {$\operatorname{Re}(f'(z))<0$};
			\node [above] at (-5,2) {$\operatorname{Re}(f'(z))>0$};
		\end{tikzpicture}
		\caption{solutions of $\operatorname{Re}(f'(z))=0$ (red).}
		\label{fig:sol_of_Re(f'(z))=0}
	\end{figure}
	
	\begin{lemma}
		\label{roots_of_f'}
		Consider solutions of the equation 
		\begin{equation}
			\label{f'=lambda}
			f'(z)=\lambda \pi i
		\end{equation}
		in the domain $\mathbb{C} \setminus \big( (-\infty,0] \cup [q,+\infty) \big)$.
		\begin{enumerate}
			\item[\textup{(1)}] For $\lambda \in (0,k)$, Equation \eqref{f'=lambda} has exactly two solutions $\tau_{\lambda}$ and $-r-\overline{\tau_{\lambda}}$, where $\tau_{\lambda} = x_{\lambda} + iY(x_{\lambda})$ for some $x_\lambda \in (\mu_0,\mu_1)$.
			For $\lambda = 0$, Equation \eqref{f'=lambda} has a unique solution $\tau_0 := \mu_0$. 
			
			\item[\textup{(2)}] The polynomial
			\begin{equation}
				\label{def_P}
				P(z) := (z+r)^{q+k}(z-q)^k - z^{q+k}(z+q+r)^k
			\end{equation}
			has no multiple zero. 
			It has exactly $k$ zeros in the half-plane $\operatorname{Re} z > -r/2$, including $\tau_{k-2}$.
		\end{enumerate}
	\end{lemma}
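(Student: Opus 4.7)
The plan is to apply the substitution $w = 2z/r + 1$ (equivalently $z = r(w-1)/2$), under which $f'(z) = h(w)$ by Equation \eqref{f'_to_h}, with parameters $a = q$, $b = k$, $s = 2q/r$ (so the hypothesis $r > 2k$ becomes $a > sb$). The domain $\mathbb{C} \setminus ((-\infty,0]\cup[q,+\infty))$ maps to $\mathbb{C} \setminus ((-\infty,1]\cup[1+s,+\infty))$, and $\operatorname{Re}(z) > -r/2$ maps to $\operatorname{Re}(w) > 0$. Part (1) will then follow almost immediately from Lemma \ref{roots_of_h}: case (5) gives, for $\lambda \in (0,k)$, a pair of non-real solutions $w_\lambda$ and $-\overline{w_\lambda}$ with $w_\lambda = x + iY_0(x)$ in the upper right quadrant; transporting via $z = r(w-1)/2$ yields $\tau_\lambda = x_\lambda + iY(x_\lambda)$ and $-r - \overline{\tau_\lambda}$. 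Case (1) with $\lambda = 0$ gives only the interior solution $w = \eta_0$ (the banks $-\eta_0 \pm i 0$ lie on the cut, outside the domain), mapping to $z = r(\eta_0 - 1)/2 = \mu_0$.

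For part (2), I first show that $P(z)$ vanishes at an interior point precisely when $f'(z) = \lambda\pi i$ for some integer $\lambda \equiv k \pmod 2$. Using the principal branches together with the algebraic identity $(q - z)^k = (-1)^k(z-q)^k$,
\[
e^{f'(z)} = \frac{z^{q+k}(z+r+q)^k}{(z+r)^{q+k}(q-z)^k} = (-1)^k\, \frac{z^{q+k}(z+q+r)^k}{(z+r)^{q+k}(z-q)^k},
\]
so $P(z) = 0$ holds on the interior iff $e^{f'(z)} = (-1)^k$. Combining this with part (1) and Lemma \ref{roots_of_h} (and the conjugation symmetry $f'(\overline z) = \overline{f'(z)}$), each integer $\lambda$ with $|\lambda| < k$ and $\lambda \equiv k \pmod 2$ contributes one interior zero in $\operatorname{Re}(z) > -r/2$, yielding $k - 1$ zeros in total; integers with $|\lambda| > k$ yield either purely imaginary $w$-solutions (i.e.\ $\operatorname{Re}(z) = -r/2$, excluded) or none. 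The values $\lambda = \pm k$ give solutions $\pm \eta_1 \pm i 0$ on the $w$-cut, corresponding to the single real zero $z_0 = r(\eta_1 - 1)/2 > q$ of $P$; to verify $P(z_0) = 0$, observe that for real $z > q$ the equation $P(z) = 0$ reduces to $F(z) := \log(A(z)/B(z)) = 0$ with $A(z) = (z+r)^{q+k}(z-q)^k$, $B(z) = z^{q+k}(z+q+r)^k$, and after substitution this becomes $H(\cdot, 0) = 0$ on $(1+s, +\infty)$, whose unique solution is $\eta_1$ by Lemma \ref{H(x,0)}. Altogether this gives exactly $k$ zeros, and $\tau_{k-2}$ (corresponding to $\lambda = k-2 \equiv k \pmod 2$) is visibly among them.

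The main obstacle is the no-multiple-zero claim. A double zero of $P$ satisfies $A(z) = B(z)$ and $A'(z) = B'(z)$. A direct evaluation of $P$ at $z \in \{0, -r, q, -q-r\}$ shows that $A(z), B(z) \neq 0$ at any zero of $P$, so $A'(z)/A(z) = B'(z)/B(z)$ must hold, which after clearing denominators becomes the quadratic equation
\[
N(z) := q(2k - r)z(z + r) + q(q+k)r(q+r) = 0.
\]
By Vieta's formulas, its roots $z_1, z_2$ satisfy $z_1 + z_2 = -r$ and $z_1 z_2 < 0$; one verifies $z_1 > q$ and $z_2 = -r - z_1 < -r$. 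Examining $F(z)$ on $(q,+\infty)$, using $F(q^+) = -\infty$, $F(z) \to 0^+$ as $z \to +\infty$, and $F'(z) = 0$ only at $z_1$, one sees that $F$ increases from $-\infty$, crosses $0$ uniquely at $z_0$, attains a positive maximum $F(z_1) > 0$, and decreases to $0$; in particular $F(z_1) \neq 0$, so $P(z_1) \neq 0$. The functional symmetry $P(-r-z) = (-1)^{q+1} P(z)$ (verified by direct substitution) then yields $P(z_2) = (-1)^{q+1}P(z_1) \neq 0$, completing the proof.
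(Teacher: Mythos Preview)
Your treatment of part~(1) is the same as the paper's.

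For part~(2) you take a genuinely different route. The paper does not isolate the half-plane $\operatorname{Re}z>-r/2$ first; instead it observes that for every non-real $z$, $P(z)=0$ is equivalent to $f'(z)=\lambda\pi i$ with $\lambda\equiv k\pmod 2$, and then uses Lemma~\ref{roots_of_h} (all cases, including the purely imaginary $w$ from case~(4) and the bank solutions from cases (1)--(3)) to exhibit $q+2k-1$ pairwise distinct zeros of $P$: namely $q-1$ on the line $\operatorname{Re}z=-r/2$, $k$ in the right half-plane, and $k$ in the left. Since $\deg P=q+2k-1$, this single count simultaneously gives ``no multiple zero'' and ``exactly $k$ in $\operatorname{Re}z>-r/2$''. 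Your argument instead (i) counts the $k-1$ interior zeros plus the real zero $z_0>q$ directly, and (ii) handles simplicity by the explicit quadratic $N(z)$ and the monotonicity analysis of $F=\log(A/B)$ on $(q,+\infty)$. Both work; the paper's degree-counting is considerably shorter, while yours is more self-contained and avoids enumerating the zeros on $\operatorname{Re}z=-r/2$.

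One small gap in your count: to conclude \emph{exactly} $k$ zeros in $\operatorname{Re}z>-r/2$ you must also rule out real zeros of $P$ in the interval $(-r/2,0]$, which lies on the cut and is not covered by your interior equivalence $P(z)=0\Leftrightarrow e^{f'(z)}=(-1)^k$. This is easy: $P(0)=(-q)^k r^{q+k}\neq 0$, and for $z\in(-r/2,0)$ the equation $|A(z)|=|B(z)|$ (which $P(z)=0$ would force, signs permitting) becomes $H(w,0)=0$ for $w=2z/r+1\in(0,1)$, impossible by Lemma~\ref{H(x,0)} since $H(x,0)<0$ on $(0,\eta_0)\supset(0,1)$. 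With this sentence added, your proof is complete.
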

	
	\begin{proof}
		Part (1) immediately follows from Substitution \eqref{f'_to_h} and Lemma \ref{roots_of_h}. 
		Now we prove part (2). 
		Note that if $z \in \mathbb{C} \setminus \mathbb{R}$, then $P(z) = 0$ if and only if $f'(z) = \lambda \pi i$ for some integer $\lambda$ such that $\lambda \equiv k \pmod{2}$. 
		Therefore, Substitution \eqref{f'_to_h} and Lemma \ref{roots_of_h} imply that $P(z)$ has
		\begin{itemize}
			\item $q-1$ distinct zeros on the line $\operatorname{Re} z = -r/2$; 
			\item $k$ distinct zeros in the half-plane $\operatorname{Re} z > -r/2$, including $\tau_{k-2}$;
			\item $k$ distinct zeros in the half-plane $\operatorname{Re} z < -r/2$.
		\end{itemize}
		Since $\deg P = q+2k-1$, we conclude that part (2) holds.
	\end{proof}
	
	Now, we investigate the asymptotic behavior of the integral $J_{n,\lambda}$ (defined by \eqref{def_J}) as $n \to +\infty$.
	
	\begin{lemma}
		\label{f_0(z)}
		For any $\lambda\in[0,k)$, the asymptotic behavior of the integral \eqref{def_J} as $n\to+\infty$ is determined by the single saddle point $\tau_{\lambda}$ defined in Lemma \ref{roots_of_f'}. 
		More precisely, the following asymptotic formula holds:
		\begin{align*}
			J_{n,\lambda}&=\frac{1}{2\pi i}\int_{\mu-i\infty}^{\mu+i\infty}e^{n(f(z)-\lambda\pi iz)}g_n(z)\,\de z\\
			&\sim \frac{1}{\sqrt{2\pi n|f''(\tau_\lambda)|}}e^{nf_0(\tau_\lambda)}|g(\tau_\lambda)|\cdot e^{-\frac{i}{2}\arg(f''(\tau_\lambda))+i\arg(g(\tau_\lambda))} \quad \text{as~} n\to+\infty,
		\end{align*}
		where
		\begin{align}
			f_0(z) :=& f(z)-f'(z)z \notag\\
			=&~k(r+q)\log(z+r+q)+kq\log(q-z)-r(q+k)\log(z+r) \notag\\
			&+ rq\log(r)+2kq\sum_{p \mid q \atop p \text{~prime}} \frac{\log p}{p-1}. 
			\label{def_f_0}
		\end{align}
	\end{lemma}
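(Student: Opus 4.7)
The plan is to apply the saddle-point method (Theorem \ref{saddle}) to $J_{n,\lambda}$ with phase $F(z) := f(z) - \lambda\pi i z$ and amplitude $g_n(z)$. By Lemma \ref{roots_of_f'}, $\tau_\lambda$ satisfies $F'(\tau_\lambda) = f'(\tau_\lambda) - \lambda\pi i = 0$, so it is the relevant saddle point. A direct manipulation of the definitions in Definition \ref{def_f_g} gives $F(\tau_\lambda) = f(\tau_\lambda) - f'(\tau_\lambda)\,\tau_\lambda = f_0(\tau_\lambda)$, with the $z\log z$ and $z\log(z+r+q)$ pairs cancelling to produce the explicit form \eqref{def_f_0}. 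For $\lambda \in (0,k)$, one has $f''(\tau_\lambda) \neq 0$: the strict monotonicity \eqref{Im(f')_increase_on_C} forces $\frac{d}{dx} f'(x+iY(x))\big|_{x=x_\lambda} = f''(\tau_\lambda)(1 + iY'(x_\lambda)) \neq 0$. The endpoint case $\lambda = 0$ at $\tau_0 = \mu_0$ is handled by a direct computation on the real axis.

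Next I would deform the vertical contour $\operatorname{Re} z = \mu$ to a contour $\mathcal{L}$ through $\tau_\lambda = x_\lambda + iY(x_\lambda)$ using Lemma \ref{lem_general_L}(1). A workable choice for $\lambda \in (0,k)$: fix $\mu \in (0, \mu_0)$; take the vertical ray from $\mu - i\infty$ to $(\mu, 0)$; then a non-decreasing piecewise $C^1$ arc from $(\mu, 0)$ to $\tau_\lambda$ (following the curve $y = Y(x)$ on $[\mu_0, \min(x_\lambda, x_*)]$, with a horizontal detour at height $Y(x_\lambda)$ when $x_\lambda > x_*$ since $Y$ is no longer monotone past $x_*$); and finally a horizontal ray from $\tau_\lambda$ out to $+\infty + iY(x_\lambda)$. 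The local shape at $\tau_\lambda$ is smoothed so that the tangent direction of $\mathcal{L}$ is the steepest-descent direction $\theta = (\pi - \arg f''(\tau_\lambda))/2$, arranging condition (3) of Theorem \ref{saddle}.

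The expected main obstacle is verifying condition (4): $\tau_\lambda$ is the \emph{unique} maximum of $\operatorname{Re} F$ on $\mathcal{L}$. On the portion of $\mathcal{L}$ coinciding with $y = Y(x)$, the relation $\operatorname{Re} f'(x+iY(x)) = 0$ from \eqref{sign_of_Re_f'} combined with the Cauchy--Riemann computation used in Lemma \ref{im(h(z))} yields
\[
\frac{d}{dx} \operatorname{Re} F(x+iY(x)) = -Y'(x)\bigl(\operatorname{Im} f'(x+iY(x)) - \lambda\pi\bigr),
\]
so by the monotonicity \eqref{Im(f')_increase_on_C} of $\operatorname{Im} f'$ (from $0$ at $\mu_0$ to $k\pi$ at $\mu_1$) together with the sign pattern \eqref{sign_of_Y'} of $Y'$, $\operatorname{Re} F$ is strictly maximized on this arc at $x = x_\lambda$. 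Along the horizontal tail and detour, \eqref{sign_of_Re_f'} gives the sign of $\operatorname{Re} f'$ (negative above $Y(x)$, positive below and to the right of $\mu_0$), which pins down strict monotonicity of $\operatorname{Re} F$ there. Along the vertical portion, Lemma \ref{lem_asym_f_g} gives $\operatorname{Re} F \to -\infty$ at $\mu \pm i\infty$ (using $|\lambda| < k$), and \eqref{sign_of_Re_f'} forces $\operatorname{Re} F(\mu + it)$ to be dominated by its value at $(\mu, 0)$, which is strictly less than $\operatorname{Re} f_0(\tau_\lambda)$ by the first estimate. Piecing the three estimates together yields the required uniqueness.

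The remaining hypotheses of Theorem \ref{saddle} are short: (1) is by construction; (2) holds by the above together with $g(\tau_\lambda) \neq 0$ from \eqref{def_g}; (5) is Lemma \ref{lem_tilde_S_n}; (6) is Lemma \ref{lem_general_L}(2). Theorem \ref{saddle} then delivers the asymptotic, and unpacking $g(\tau_\lambda) = |g(\tau_\lambda)|\,e^{i\arg g(\tau_\lambda)}$ with $F(\tau_\lambda) = f_0(\tau_\lambda)$ rearranges the output into the exact formula stated in the lemma.
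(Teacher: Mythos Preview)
Your overall strategy matches the paper's: deform to a contour through $\tau_\lambda$ built from a vertical half-line, a piece of the curve $y=Y(x)$, and a horizontal ray, then verify the hypotheses of Theorem~\ref{saddle}. The key monotonicity computation $\frac{d}{dx}\operatorname{Re}F(x+iY(x))=-Y'(x)\bigl(\operatorname{Im}f'(x+iY(x))-\lambda\pi\bigr)$ on the $Y$-arc is exactly what the paper uses, and your treatment of the horizontal tail via \eqref{sign_of_Re_f'} is also correct.

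There is, however, a genuine gap in your contour choice. You fix $\mu\in(0,\mu_0)$, so your contour must travel from $(\mu,0)$ to $(\mu_0,0)$ before joining the curve $y=Y(x)$. On that real segment $\operatorname{Re}F(x)=f(x)$, and $f'(x)=h(2x/r+1)<0$ for $x\in(0,\mu_0)$ by \eqref{sign_of_H(x,0)} under the substitution \eqref{f'_to_h}; hence $f(\mu)>f(\mu_0)$. Your claim that $\operatorname{Re}F(\mu,0)<\operatorname{Re}f_0(\tau_\lambda)$ ``by the first estimate'' is therefore unjustified: for small $\lambda>0$ one has $\operatorname{Re}f_0(\tau_\lambda)$ only slightly above $f(\mu_0)$, so $f(\mu)$ can exceed it, and condition (4) of Theorem~\ref{saddle} fails. (Also, on the vertical ray the relevant derivative is $-\operatorname{Im}f'+\lambda\pi$, controlled by \eqref{sign_of_Im(f')}, not by \eqref{sign_of_Re_f'}.) The paper avoids this entirely by anchoring the vertical half-line at $\mu_0$ itself, so no real-axis segment appears. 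A second imprecision: for $x_\lambda>x_*$ you cannot follow $Y$ on $[\mu_0,x_*]$ and then drop to height $Y(x_\lambda)<Y(x_*)$ while remaining non-decreasing; the paper instead follows $Y$ only up to the unique $\hat{x}<x_*$ with $Y(\hat{x})=Y(x_\lambda)$ and goes horizontal from there (its Case~4). Finally, the paper does not smooth $\mathcal{L}$ to the steepest-descent direction; it verifies condition~(3) directly via the identity $\operatorname{Re}\bigl(f''(z(t))z'(t)^2\bigr)=\frac{\partial u}{\partial y}\cdot Y'\cdot(1+(Y')^2)<0$ obtained from Cauchy--Riemann and \eqref{Re(f')_partial_y<0_on_C}, which is cleaner and keeps the contour within the class of Lemma~\ref{lem_general_L}. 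With these fixes your plan becomes the paper's four-case argument.
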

	
	\begin{proof}
		Our strategy is to choose a contour $\mathcal{L}$ that passes through the saddle point $\tau_{\lambda}$ and satisfies all requirements of Theorem \ref{saddle} (with $f(z)$ in Theorem \ref{saddle} replaced by $f(z)-\lambda \pi i z$). 
		
		Define the curve
		\[
		C := \left\{ x+iY(x) \mid x \in (\mu_0,\mu_1) \right\}.
		\]
		Note that $\tau_{\lambda} \in \{\mu_0\} \cup C$ for any $\lambda \in [0,k)$. 
		Clearly, we have $g(\tau_{\lambda}) \neq 0$ by \eqref{def_g}. 
		By \eqref{def_f}, we have
		\begin{equation}
			\label{f''}
			\frac{\de^2}{\de z^2} (f(z)-\lambda \pi i z) = f''(z) = \frac{(r-2k)q(2z+r)^2 - rq(r+2q)(r+2k+2q)}{4z(z+r)(z+r+q)(z-q)}.
		\end{equation}
		The function $f''(z)$ has only real zeros and $f''(\mu_0) >0$ (since $0<\mu_0<q$). 
		Thus, we have $f''(\tau_\lambda) \neq 0$ for any $\lambda \in [0,k)$. 
		Therefore, requirement (2) of Theorem \ref{saddle} is satisfied.
		
		In the sequel, we will consider four cases. 
		We mention in advance that for each case, we can always choose a small $\varepsilon_0 > 0$ such that the contour $\mathcal{L}$ lies within the domain $D_{\varepsilon_0}$ defined by \eqref{def_D_eps}. 
		Requirement (5) of Theorem \ref{saddle} is satisfied by \eqref{g_n=g}.
		Requirement (6) of Theorem \ref{saddle} is satisfied by Lemma \ref{lem_general_L} (2) or by the proof of Lemma \ref{lem_S=sum_J}. 
		Thus, we only need to check requirements (3) and (4) of Theorem \ref{saddle} for each case.
		
		Note that if we parametrize $\mathcal{L}$ by a piecewise $C^1$ smooth function $z(t)$ such that $z(t_0) = \tau_\lambda$ and $z(t)$ is $C^1$ smooth at $t=t_0$, then requirement (4) of Theorem \ref{saddle} is equivalent to 
		\begin{equation}
			\label{requirement_(4)}
			\operatorname{Re}(f''(z(t_0)) \cdot z'(t_0)^2) < 0.
		\end{equation}
		Thus, for each case, we only need to verify that $z=\tau_\lambda$ is the unique maximum point of $\operatorname{Re}(f(z))$ along $\mathcal{L}$ and Equation \eqref{requirement_(4)} holds. 
		
		Let $x_{\lambda}:=\operatorname{Re}(\tau_{\lambda})$. Now, we distinguish between four cases.
		
		\textbf{Case 1:} $x_{\lambda}=\mu_0$ (that is, $\lambda = 0$ and $\tau_{\lambda}=\mu_0$). 
		We choose $\mathcal{L}$ to be the upward vertical line $\operatorname{Re} z = \mu_0$, as shown in Figure \ref{case1}.
		\begin{figure}[h]
			\centering
			\begin{tikzpicture}
				\draw [thick] [->] (-1,0)--(7,0);
				\draw [thick] [->] (0,-3)--(0,4);
				\draw[red, domain=90:180, smooth, variable=\t,thick] plot ({2*cos(\t)+4}, {1*sin(\t)});
				\draw[red, domain=0:90, smooth, variable=\t,thick] plot ({1.2*cos(\t)+4}, {1*sin(\t)});
				\draw[fill] (0,0) circle [radius=0.05];
				\draw[fill] (5.2,0) circle [radius=0.05];
				\draw[fill] (2,0) circle [radius=0.05];
				\draw [thick,blue] [->] (2,-3)--(2,-1);
				\draw [thick,blue]  (2,-1)--(2,0);
				\draw [thick,blue] [->] (2,0)--(2,2);
				\draw [thick,blue]  (2,2)--(2,4);
				\node [below left] at (0,0) {$0$};
				\node [red,below left] at (2,0) {$\mu_0$};
				\node [red,below] at (5.2,0) {$\mu_1$};
				\node [above] at (7,0) {$\operatorname{Re}z$};
				\node [right] at (0,4) {$\operatorname{Im}z$};
				\node [red,above] at (3.7,1) {$C$};
			\end{tikzpicture}
			\caption{contour $\mathcal{L}$ for Case 1 (blue).}
			\label{case1}
		\end{figure}
		
		We parameterize $\mathcal{L}$ by $z(t) = \mu_0+it$, $t \in (-\infty,+\infty)$. Then 
		\begin{equation}
			\label{999}
			\frac{\de}{\de t} \operatorname{Re}(f(z(t))) = \operatorname{Re}\left( f'(z(t)) \cdot z'(t) \right) = -\operatorname{Im}(f'(z(t))).
		\end{equation}
		By \eqref{def_f}, we have
		\[
		\operatorname{Im}(f'(z)) = (q+k)(\arg z - \arg(z+r)) + k(\arg(z+r+q)-\arg(q-z)),
		\]
		where each $\arg(\cdot)$ takes values in $(-\pi,\pi)$. 
		It is easy to check that
		\begin{equation}
			\label{sign_of_Im(f')}
			\operatorname{sgn}(\operatorname{Im}(f'(z))) = \operatorname{sgn}(\operatorname{Im} z). 
		\end{equation}
		By Equations \eqref{999} and \eqref{sign_of_Im(f')}, we have
		\[
		\frac{\de}{\de t} \operatorname{Re}(f(z(t))) 
		\begin{cases}
			>0 &\text{if}\; t<0,\\
			<0 &\text{if}\; t>0.
		\end{cases}
		\]
		Thus, $z(0)=\mu_0$ is the unique maximum point of $\operatorname{Re}(f(z))$ along $\mathcal{L}$. 
		Moreover, by \eqref{f''} and $0<\mu_0<q$, we have
		\[
		\operatorname{Re}(f''(z(0)) \cdot z'(0)^2) = -f''(\mu_0) < 0.
		\]
		Therefore, Equation \eqref{requirement_(4)} is satisfied.
		
		\textbf{Case 2:} $x_\lambda \in (\mu_0,x_*)$. We choose $\mathcal{L}$ to be the contour parameterized by
		\[
		z(t)=
		\begin{cases}
			\mu_0+i(t-\mu_0) &\text{if}\; t\in(-\infty,\mu_0],\\
			t+iY(t) &\text{if}\; t\in(\mu_0,x_*],\\
			t+iY(x_*) &\text{if}\; t\in(x_*,+\infty).
		\end{cases}
		\]
		See Figure \ref{case2}.			
		
		\begin{figure}[h]
			\centering
			\begin{tikzpicture}
				\draw [thick] [->] (-1,0)--(7,0);
				\draw [thick] [->] (0,-3)--(0,4);
				\draw[blue, domain=90:180, smooth, variable=\t,thick] plot ({2*cos(\t)+4}, {1*sin(\t)});
				\draw[red, domain=0:90, smooth, variable=\t,thick] plot ({1.2*cos(\t)+4}, {1*sin(\t)});
				\draw[fill] (0,0) circle [radius=0.05];
				\draw[fill] (4,0) circle [radius=0.05];
				\draw[fill] (2,0) circle [radius=0.05];
				\draw[fill] (4,1) circle [radius=0.05];
				\draw[fill] (5.2,0) circle [radius=0.05];
				\draw[fill] (2.5,0) circle [radius=0.05];
				\draw[fill] (2.5,0.66) circle [radius=0.05];
				\draw [thick,blue] [->] (2,-3)--(2,-1);
				\draw [thick,blue]  (2,-1)--(2,0);
				\draw [thick,blue] [->] (4,1)--(6,1);
				\draw [thick,blue]  (6,1)--(7,1);
				\draw [thick,dashed]  (4,1)--(4,0);
				\draw [thick,dashed]  (2.5,0)--(2.5,0.66);
				\node [below] at (4,0) {$x_*$};
				\node [below] at (2.5,0) {$x_{\lambda}$};
				\node [above] at (2.5,0.7) {$\tau_{\lambda}$};
				\node [below left] at (0,0) {$0$};
				\node [red,below left] at (2,0) {$\mu_0$};
				\node [red,below] at (5.2,0) {$\mu_1$};
				\node [above] at (7,0) {$\operatorname{Re}z$};
				\node [right] at (0,4) {$\operatorname{Im}z$};
				\node [red,above] at (3.7,1) {$C$};
			\end{tikzpicture}
			\caption{contour $\mathcal{L}$ for Case 2 (blue).}
			\label{case2}
		\end{figure}
		
		On the half-line $z(t)=\mu_0+i(t-\mu_0)$, $t\in(-\infty,\mu_0]$, we have
		\begin{align*}
			&~\frac{\de }{\de t}\operatorname{Re}\Big(f(z(t))-\lambda\pi iz(t)\Big)=\operatorname{Re}\Big((f'(z(t))-\lambda\pi i) \cdot z'(t)\Big)\\
			=&~-\operatorname{Im}\big(f'(z(t))\big)+\lambda\pi\geqslant\lambda\pi>0\qquad(\text{by}\ \eqref{sign_of_Im(f')}).
		\end{align*}
		
		On the curve $z(t)=t+iY(t)$, $t\in(\mu_0,x_*)$, we have
		\begin{align*}
			&~\frac{\de }{\de t}\operatorname{Re}\Big(f(z(t))-\lambda\pi iz(t)\Big)=\operatorname{Re}\Big((f'(z(t))-\lambda\pi i) \cdot z'(t)\Big)\\
			=&~\operatorname{Re}\Big((f'(z(t))-\lambda\pi i) \cdot(1+iY'(t))\Big) = -Y'(t)\cdot \Big(\operatorname{Im}\big(f'(z(t))\big)-\lambda\pi\Big).
		\end{align*}
		By \eqref{sign_of_Y'}, we have $Y'(t)>0$ for $t \in (\mu_0,x_*)$. 
		By \eqref{Im(f')_increase_on_C}, we have $\operatorname{Im}(f'(z(t))) < \lambda \pi$ for $t \in (\mu_0,x_{\lambda})$ and $\operatorname{Im}(f'(z(t))) > \lambda \pi$ for $t \in (x_{\lambda},x_{*})$. Hence,
		\[
		\frac{\de }{\de t}\operatorname{Re}\Big(f(z(t))-\lambda\pi iz(t)\Big)
		\begin{cases}
			>0 &\text{if}\; t\in(\mu_0,x_{\lambda}),\\
			<0 &\text{if}\; t\in(x_{\lambda},x_{*}).
		\end{cases}
		\]
		
		On the half-line $z(t)=t+iY(x_*)$, $t\in(x_*,+\infty)$, we have
		\begin{align*}
			&~\frac{\de }{\de t}\operatorname{Re}\Big(f(z(t))-\lambda\pi iz(t)\Big)=\operatorname{Re}\Big((f'(z(t))-\lambda\pi i) \cdot z'(t)\Big)\\
			=&~\operatorname{Re}\Big(f'(z(t))\Big) <0\qquad(\text{by}\ \eqref{sign_of_Re_f'}).
		\end{align*}
		
		In summary, we have shown that $z(x_{\lambda})=\tau_{\lambda}$ is the unique maximum point of $\operatorname{Re}(f(z)-\lambda\pi i z)$ along $\mathcal{L}$. 
		
		Write $f'(z)=u(x,y)+iv(x,y)$. 
		By the Cauchy--Riemann equation, we have
		\begin{equation}
			\label{CR}
			\frac{\partial v}{\partial x} = -\frac{\partial u}{\partial y}.
		\end{equation}
		By \eqref{sign_of_Re_f'}, we have $u(x,Y(x)) = 0$ for any $x \in (\mu_0,\mu_1)$, which implies
		\begin{equation}
			\label{u_x_to_u_y_on_C}
			\frac{\partial u}{\partial x}(x,Y(x)) =  - \frac{\partial u}{\partial y}(x,Y(x)) \cdot Y'(x) \quad\text{for any}\ x \in (\mu_0,\mu_1).
		\end{equation}
		For $t \in (\mu_0,x_{*})$, we have $z(t)=t+iY(t)$, 
		\begin{align}
			&~\operatorname{Re}\Big( f''(z(t)) \cdot z'(t)^2 \Big) = \operatorname{Re}\left( \left( \frac{\partial u}{\partial x}(t,Y(t)) + i\frac{\partial v}{\partial x}(t,Y(t)) \right) \cdot \big( 1+iY'(t) \big)^2 \right) \notag\\
			=&~\frac{\partial u}{\partial x}(t,Y(t))\cdot\big(1-Y'(t)^2\big)-\frac{\partial v}{\partial x}(t,Y(t))\cdot 2Y'(t)\label{777}
		\end{align}
		Substituting \eqref{CR} and \eqref{u_x_to_u_y_on_C} into \eqref{777}, we obtain
		\begin{equation}
			\label{778}
			\operatorname{Re}\Big( f''(z(t)) \cdot z'(t)^2 \Big) = \frac{\partial u}{\partial y}(t,Y(t)) \cdot Y'(t) \cdot \big(1+Y'(t)^2\big).
		\end{equation}
		By Equations \eqref{778}, \eqref{sign_of_Y'}, and \eqref{Re(f')_partial_y<0_on_C}, we obtain
		\[
		\operatorname{Re}\Big( f''(z(t)) \cdot z'(t)^2 \Big)<0\quad\text{for any}\ t\in(\mu_0,x_{*}).
		\]
		In particular, 
		\[
		\operatorname{Re}\Big( f''(z(x_\lambda)) \cdot z'(x_\lambda)^2 \Big)<0.
		\]
		Therefore, Equation \eqref{requirement_(4)} is satisfied.
		
		\textbf{Case 3:} $x_{\lambda}=x_*$. 
		Take a real number $\hat{x}<x_*$ that is sufficiently close to $x_*$ such that $\delta := Y(x_*) - Y(\hat{x}) >0$ is sufficiently small. 
		We choose $\mathcal{L}$ to be the contour parameterized by
		\[
		z(t) =
		\begin{cases}
			\mu_0+i(t-\mu_0) &\text{if}\; t\in(-\infty,\mu_0],\\
			t+iY(t) &\text{if}\; t\in(\mu_0,\hat{x}],\\
			t+i(Y(x_*)-\delta) &\text{if}\; t\in(\hat{x},x_*-\delta],\\
			t+i(t-x_*+Y(x_*)) &\text{if}\; t\in(x_*-\delta,x_*+\delta],\\
			t+i(Y(x_*)+\delta) &\text{if}\; t\in(x_*+\delta,+\infty).
		\end{cases}
		\]
		See Figure \ref{case3}.
		
		\begin{figure}[h]
			\centering
			\begin{tikzpicture}
				\draw [thick] [->] (-1,0)--(7,0);
				\draw [thick] [->] (0,-3)--(0,4);
				\draw[red, domain=90:125, smooth, variable=\t,thick] plot ({2*cos(\t)+4}, {1*sin(\t)});
				\draw[blue, domain=125:180, smooth, variable=\t,thick] plot ({2*cos(\t)+4}, {1*sin(\t)});
				\draw[red, domain=0:90, smooth, variable=\t,thick] plot ({1.2*cos(\t)+4}, {1*sin(\t)});
				\draw[fill] (0,0) circle [radius=0.05];
				\draw[fill] (4,0) circle [radius=0.05];
				\draw[fill] (2.82,0.8) circle [radius=0.05];
				\draw[fill] (2.82,0) circle [radius=0.05];
				\draw[fill] (4,1) circle [radius=0.05];
				\draw[fill] (5.2,0) circle [radius=0.05];
				\draw[fill] (2,0) circle [radius=0.05];
				\draw [thick,blue] [->] (2,-3)--(2,-1);
				\draw [thick,blue]  (2,-1)--(2,0);
				\draw [thick,blue] (2.82,0.8)--(3.8,0.8);
				\draw [thick,blue] (3.8,0.8)--(4.2,1.2);
				\draw [thick,blue] [->] (4.2,1.2)--(5.5,1.2);
				\draw [thick,blue]  (5.5,1.2)--(7,1.2);
				\draw [thick,dashed]  (4,1)--(4,0);
				\draw [thick,dashed]  (2.82,0.8)--(2.82,0);
				\node [below] at (4,0) {$x_*$};
				\node [below] at (2.82,0) {$\hat{x}$};
				\node [below left] at (0,0) {$0$};
				\node [red,below left] at (2,0) {$\mu_0$};
				\node [red,below] at (5.4,0) {$\mu_1$};
				\node [above] at (7,0) {$\operatorname{Re}z$};
				\node [right] at (0,4) {$\operatorname{Im}z$};
				\node [red,above] at (3.4,1) {$C$};
			\end{tikzpicture}
			\caption{contour $\mathcal{L}$ for Case 3 (blue).}
			\label{case3}
		\end{figure}
		
		For $t \in [x_*-\delta, x_*+\delta]$, we have $z(t)=t+i(t-x_*+Y(x_*))$,
		\[
		f(z(t))-\lambda\pi i z(t) = f(z(x_*))-\lambda\pi i z(x_*) + \frac{f''(z(x_*))\cdot z'(x_*)^2}{2}(t-x_*)^2 + O(|t-x_*|^3).
		\]
		Write $f'(z)=u(x,y)+iv(x,y)$, then
		\begin{align}
			\operatorname{Re}\left(f''(z(x_*))\cdot(z'(x_*))^2\right)&=\operatorname{Re}\left(\left(\frac{\partial u}{\partial x}(x_*,Y(x_*))+i\frac{\partial v}{\partial x}(x_*,Y(x_*))\right)\cdot(1+i)^2\right)\notag\\
			&=-2\frac{\partial v}{\partial x}(x_*,Y(x_*))\notag\\
			&= 2\frac{\partial u}{\partial y}(x_*,Y(x_*)) < 0, \label{790}
		\end{align} 		
		where in the last line, we have used the Cauchy--Riemann equation and \eqref{Re(f')_partial_y<0_on_C}.
		Therefore, 
		\begin{align*}
			&~\operatorname{Re}\left( f(z(t))-\lambda\pi iz(t) \right)\\
			=&~\operatorname{Re}\left( f(z(x_*))-\lambda\pi iz(x_*)\right) + \underbrace{\frac{\partial u}{\partial y}(x_*,Y(x_*))}_{<0}(t-x_*)^2+O(|t-x_*|^3) 
		\end{align*}
		for any $t\in [x_*-\delta, x_*+\delta]$.
		We deduce that $z(x_*)=\tau_{\lambda}$ is the unique maximum point of $\operatorname{Re}(f(z)-\lambda\pi i z)$ on the segment $z(t)=t+i(t-x_*+Y(x_*))$, $t\in [x_*-\delta, x_*+\delta]$. 
		
		Using similar arguments as in \textbf{Case 2}, one can show that $t \mapsto \operatorname{Re}(f(z(t))-\lambda\pi i z(t))$ increases strictly on $(-\infty,x_*-\delta]$ and decreases strictly on $[x_*+\delta,+\infty)$. 
		Thus, $z(x_*)=\tau_{\lambda}$ is the unique maximum point of $\operatorname{Re}(f(z)-\lambda\pi i z)$ along $\mathcal{L}$.
		Moreover, Equation \eqref{790} implies that Equation \eqref{requirement_(4)} is satisfied.
			
		\textbf{Case 4:} $x_\lambda\in(x_*,\mu_1)$. 
		By \eqref{sign_of_Y'} and the intermediate value theorem, there exists a unique $\hat{x} \in (0,x_*)$ such that $Y(\hat{x})=Y(x_{\lambda})$. 
		We choose $\mathcal{L}$ to be the contour parameterized by
		\[
		z(t)=
		\begin{cases}
			\mu_0+i(t-\mu_0) &\text{if}\; t\in(-\infty,\mu_0],\\
			t+iY(t) &\text{if}\; t\in(\mu_0,\hat{x}],\\
			t+iY(x_\lambda) &\text{if}\; t\in(\hat{x},+\infty).
		\end{cases}
		\]
		See Figure \ref{case4}.
		
		\begin{figure}[h]
			\centering
			\begin{tikzpicture}
				\draw [thick] [->] (-1,0)--(7,0);
				\draw [thick] [->] (0,-3)--(0,4);
				\draw[red, domain=90:125, smooth, variable=\t,thick] plot ({2*cos(\t)+4}, {1*sin(\t)});
				\draw[blue, domain=125:180, smooth, variable=\t,thick] plot ({2*cos(\t)+4}, {1*sin(\t)});
				\draw[red, domain=0:90, smooth, variable=\t,thick] plot ({1.2*cos(\t)+4}, {1*sin(\t)});
				\draw[fill] (0,0) circle [radius=0.05];
				\draw[fill] (4,0) circle [radius=0.05];
				\draw[fill] (2.82,0.8) circle [radius=0.05];
				\draw[fill] (4.7,0) circle [radius=0.05];
				\draw[fill] (4.7,0.8) circle [radius=0.05];
				\draw[fill] (2.82,0) circle [radius=0.05];
				\draw[fill] (4,1) circle [radius=0.05];
				\draw[fill] (5.2,0) circle [radius=0.05];
				\draw[fill] (2,0) circle [radius=0.05];
				\draw [thick,blue] [->] (2,-3)--(2,-1);
				\draw [thick,blue]  (2,-1)--(2,0);
				\draw [thick,blue] [->] (2.82,0.8)--(6,0.8);
				\draw [thick,blue]  (6,0.8)--(7,0.8);
				\draw [thick,dashed]  (4,1)--(4,0);
				\draw [thick,dashed]  (2.82,0.8)--(2.82,0);
				\draw [thick,dashed]  (4.7,0)--(4.7,0.8);
				\node [below] at (4,0) {$x_*$};
				\node [below] at (4.7,0) {$x_\lambda$};
				\node [above] at (4.7,0.9) {$\tau_{\lambda}$};
				\node [below] at (2.7,0) {$\hat{x}$};
				\node [below left] at (0,0) {$0$};
				\node [red,below left] at (2,0) {$\mu_0$};
				\node [red,below] at (5.4,0) {$\mu_1$};
				\node [above] at (7,0) {$\operatorname{Re}z$};
				\node [right] at (0,4) {$\operatorname{Im}z$};
				\node [red,above] at (3.7,1) {$C$};
			\end{tikzpicture}
			\caption{contour $\mathcal{L}$ for Case 4 (blue).}
			\label{case4}
		\end{figure}
		
		Using similar arguments as in \textbf{Case 2}, one can show that $t \mapsto \operatorname{Re}(f(z(t))-\lambda\pi i z(t))$ increases strictly on $(-\infty,x_{\lambda})$ and decreases strictly on $(x_{\lambda},+\infty)$. 
		Thus, $z(x_{\lambda})=\tau_{\lambda}$ is the unique maximum point of $\operatorname{Re}(f(z)-\lambda\pi i z)$ along $\mathcal{L}$.
		Write $f'(z)=u(x,y)+iv(x,y)$. 
		We have
		\begin{align}
			\operatorname{Re}\left( f''(z(x_{\lambda}) \cdot z'(x_{\lambda}))^2\right) &= \frac{\partial u}{\partial x}(x_{\lambda},Y(x_{\lambda}))\notag\\
			&= -\frac{\partial u}{\partial y}(x_\lambda,Y(x_\lambda)) \cdot Y'(x_\lambda) \quad(\text{by}\ \eqref{u_x_to_u_y_on_C}).\label{799}
		\end{align}
		By \eqref{799}, \eqref{Re(f')_partial_y<0_on_C}, and \eqref{sign_of_Y'}, we obtain
		\[
		\operatorname{Re}\left( f''(z(x_{\lambda}) \cdot z'(x_{\lambda}))^2\right) < 0.
		\]
		Therefore, Equation \eqref{requirement_(4)} is satisfied.
		
		In conclusion, for any $\lambda \in [0,k)$, we obtain from Theorem \ref{saddle} that 
		\[
		J_{n,\lambda} \sim \frac{1}{\sqrt{2\pi n|f''(\tau_\lambda)|}}e^{n(f(\tau_\lambda)-\lambda \pi i \tau_{\lambda})}|g(\tau_\lambda)|\cdot e^{-\frac{i}{2}\arg(f''(\tau_\lambda))+i\arg(g(\tau_\lambda))} \quad \text{as~} n\to+\infty.
		\]
		Finally, noting that $f'(\tau_\lambda)=\lambda \pi i$, we have $f(\tau_\lambda)-\lambda \pi i \tau_{\lambda} = f_0(\tau_{\lambda})$ by the definition of $f_0$ (see \eqref{def_f_0}). 
		The proof of Lemma \ref{f_0(z)} is complete.
	\end{proof}
	
	\begin{lemma}
		\label{Re(f_0)}
		Let $\tau_{\lambda}$ be as defined in Lemma \ref{roots_of_f'}.
		Let $f_0(z)$ be the function in \eqref{def_f_0}.
		Then, the function $\lambda \mapsto \operatorname{Re}(f_0(\tau_{\lambda}))$ is strictly increasing on $[0,k)$.
	\end{lemma}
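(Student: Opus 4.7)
The plan is to differentiate $\operatorname{Re}(f_0(\tau_\lambda))$ with respect to $\lambda$ and show the derivative is positive on $(0,k)$, combined with continuity at $\lambda=0$.

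First, I would observe from the definition $f_0(z)=f(z)-z f'(z)$ that
\[
f_0'(z) = f'(z) - f'(z) - z f''(z) = -z f''(z).
\]
Next, since Lemma \ref{roots_of_f'} gives $f'(\tau_\lambda)=\lambda\pi i$ for $\lambda\in[0,k)$, and since $f''(\tau_\lambda)\neq 0$ (as already noted in the proof of Lemma \ref{f_0(z)}, using \eqref{f''} together with $\tau_0=\mu_0\in(0,q)$ for the boundary case), the implicit function theorem implies that $\lambda\mapsto\tau_\lambda$ is $C^1$ on $[0,k)$ (one-sided derivative at $\lambda=0$) and
\[
\frac{d\tau_\lambda}{d\lambda} = \frac{\pi i}{f''(\tau_\lambda)}.
\]

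The main computation is then the chain rule:
\[
\frac{d}{d\lambda}f_0(\tau_\lambda) = f_0'(\tau_\lambda)\cdot\frac{d\tau_\lambda}{d\lambda} = -\tau_\lambda f''(\tau_\lambda)\cdot\frac{\pi i}{f''(\tau_\lambda)} = -\pi i\,\tau_\lambda.
\]
Taking real parts yields the clean identity
\[
\frac{d}{d\lambda}\operatorname{Re}(f_0(\tau_\lambda)) = \pi\operatorname{Im}(\tau_\lambda).
\]

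Finally, by Lemma \ref{roots_of_f'}(1), for every $\lambda\in(0,k)$ we have $\tau_\lambda = x_\lambda + iY(x_\lambda)$ with $x_\lambda\in(\mu_0,\mu_1)$, hence $\operatorname{Im}(\tau_\lambda)=Y(x_\lambda)>0$. Thus $\frac{d}{d\lambda}\operatorname{Re}(f_0(\tau_\lambda))>0$ on $(0,k)$, and combined with continuity of $\lambda\mapsto\tau_\lambda$ at $\lambda=0$ (giving continuity of $\operatorname{Re}(f_0(\tau_\lambda))$ at $\lambda=0$), the function $\lambda\mapsto\operatorname{Re}(f_0(\tau_\lambda))$ is strictly increasing on the whole interval $[0,k)$. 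There is no real obstacle here: once the identity $f_0'(z)=-zf''(z)$ is noticed, the proof is essentially a one-line chain rule calculation, and the positivity of $\operatorname{Im}(\tau_\lambda)$ is immediate from the description of the saddle points obtained in the previous section.
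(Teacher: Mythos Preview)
Your proof is correct and is in fact cleaner than the paper's. The paper parameterizes the curve by the real part $x$ rather than by $\lambda$: it writes $z(x)=x+iY(x)$, computes
\[
\frac{\de}{\de x}\operatorname{Re}\bigl(f_0(z(x))\bigr)=\operatorname{Re}\bigl(-z(x)f''(z(x))z'(x)\bigr),
\]
expands this in real and imaginary parts, and then invokes the Cauchy--Riemann equations together with the relation $\partial u/\partial x=-Y'\,\partial u/\partial y$ along the curve (Equation \eqref{u_x_to_u_y_on_C}) to simplify to $-Y(x)(1+Y'(x)^2)\,\partial u/\partial y>0$; it then combines this with the separate fact that $\lambda\mapsto x_\lambda$ is increasing. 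Your approach short-circuits all of this: by differentiating $f'(\tau_\lambda)=\lambda\pi i$ directly in $\lambda$ you obtain $\de\tau_\lambda/\de\lambda=\pi i/f''(\tau_\lambda)$, and the chain rule with $f_0'(z)=-zf''(z)$ collapses the computation to the single identity $\frac{\de}{\de\lambda}\operatorname{Re}(f_0(\tau_\lambda))=\pi\operatorname{Im}(\tau_\lambda)$. The two computations are of course consistent (one is the other multiplied by $\de\lambda/\de x$), but your parameterization avoids the Cauchy--Riemann bookkeeping and the intermediate monotonicity step, at the modest cost of appealing to the implicit function theorem for the smoothness of $\lambda\mapsto\tau_\lambda$ (which is unproblematic since $f''(\tau_\lambda)\neq 0$ throughout, as already noted).
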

	
	\begin{proof}
		Write $z=x+iy$ and $f'(z)=u(x,y)+iv(x,y)$. 
		Recall that for any $\lambda \in (0,k)$ we have $\tau_{\lambda}=x_{\lambda}+iY(x_{\lambda})$ for some $x_{\lambda} \in (\mu_0,\mu_1)$. 
		By $f'(\tau_{\lambda})=\lambda \pi i$ and Equations \eqref{Im(f')_increase_on_C}\eqref{Y_is_continuous_at_end_points}, 
		\begin{equation}
			\lambda \mapsto x_{\lambda} \quad\text{is strictly increasing on}\ [0,k).
			\label{741}
		\end{equation}
		Let $z(x) = x+iY(x)$ for $x \in (\mu_0,\mu_1)$.
		We have
		\begin{align*}
			\frac{\de}{\de x} \operatorname{Re}\left( f_0(z(x)) \right) &=\frac{\de}{\de x} \operatorname{Re}\left( f(z(x)) - z(x)f'(z(x)) \right) = \operatorname{Re}\left( -z(x)f''(z(x)) \cdot z'(x) \right) \\
			&=\operatorname{Re}\left( -(x+iY(x)) \cdot \left( \frac{\partial u}{\partial x}(x,Y(x)) +i\frac{\partial v}{\partial x}(x,Y(x)) \right) \cdot (1+iY'(x)) \right)\\
			&= -x \cdot \frac{\partial u}{\partial x} + Y(x) \cdot \frac{\partial v}{\partial x} + Y'(x) \cdot \left( Y(x) \cdot \frac{\partial u}{\partial x} + x \cdot \frac{\partial v}{\partial x}  \right).
		\end{align*}
		Substituting \eqref{CR} and \eqref{u_x_to_u_y_on_C} into the right-hand side above, we obtain
		\[
		\frac{\de}{\de x} \operatorname{Re}\left( f_0(z(x)) \right) = -Y(x)\cdot (1+Y'(x)^2)\cdot \frac{\partial u}{\partial y}(x,Y(x)).
		\]
		Then, using $Y(x)>0$ and \eqref{Re(f')_partial_y<0_on_C}, we have
		\begin{equation}
			\frac{\de}{\de x} \operatorname{Re}\left( f_0(z(x)) \right) > 0 \quad\text{for any}\ x\in(\mu_0,\mu_1).
			\label{742}
		\end{equation}
		By Equations \eqref{741}, \eqref{742}, and \eqref{Y_is_continuous_at_end_points}, we conclude that $\lambda \mapsto \operatorname{Re}(f_0(\tau_{\lambda}))$ is strictly increasing on $[0,k)$.
	\end{proof}
	
	At the end of this section, we establish the asymptotic formula for the linear form $S_n$ (see Definition \ref{def_S_n} and Lemma \ref{lem_linear_forms}) as $n \to +\infty$.
	
	\begin{lemma}
		\label{lem_asym_S_n}
		Let $\tau_{k-2}$ be as defined in Lemma \ref{roots_of_f'} (with $\lambda = k-2$).
		Let $f_0(z)$ be the function in \eqref{def_f_0}.
		Then, as $n \to +\infty$, we have
		\[
		|S_n| = \exp\big( -\alpha n + o(n) \big) \cdot \big( |\cos(n\omega + \varphi)| + o(1)  \big),
		\]
		where
		\begin{equation}
			\label{def_alpha_omega}
			\alpha = -\operatorname{Re}\left( f_0(\tau_{k-2}) \right), \quad \omega = \operatorname{Im}(f_0(\tau_{k-2})),
		\end{equation}
		and
		\begin{equation}
			\label{def_varphi}
			\varphi = -\frac{1}{2}\arg \left(f''(\tau_{k-2})\right) + \arg\left(g(\tau_{k-2})\right).   
		\end{equation}
	\end{lemma}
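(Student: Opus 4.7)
The plan is to assemble the lemma from the decomposition of $\widetilde{S}_n$ in Lemma \ref{lem_S=sum_J}, the individual saddle-point asymptotics in Lemma \ref{f_0(z)}, and the monotonicity in Lemma \ref{Re(f_0)}. Throughout, the growth factor $n^{O(1)}$ coming from Lemma \ref{lem_tilde_S_n} will be absorbed into the $o(n)$ in the exponent, so it is enough to prove an analogous asymptotic for $|\widetilde{S}_n|$.

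First I would combine Lemmas \ref{lem_S=sum_J} and \ref{f_0(z)}. For every $l$ in the index set $\{0\le l \le k-2:\ l\equiv k\pmod{2}\}$, the saddle-point asymptotic from Lemma \ref{f_0(z)} says
\[
J_{n,l} = A_{n,l}\,e^{n f_0(\tau_l)}\,\bigl(1+o(1)\bigr),\qquad A_{n,l}:=\frac{|g(\tau_l)|}{\sqrt{2\pi n|f''(\tau_l)|}}\,e^{i\psi_l},
\]
where $\psi_l:=-\tfrac{1}{2}\arg f''(\tau_l)+\arg g(\tau_l)$. Writing $A_{n,l}e^{n f_0(\tau_l)}$ in polar form and taking real parts gives
\[
\operatorname{Re}(J_{n,l}) = |A_{n,l}|\,e^{n\operatorname{Re}(f_0(\tau_l))}\Bigl(\cos\bigl(n\operatorname{Im}(f_0(\tau_l))+\psi_l\bigr)+o(1)\Bigr).
\]
In particular, for $l=k-2$ this is exactly the definition of $\alpha$, $\omega$, $\varphi$ in \eqref{def_alpha_omega}--\eqref{def_varphi}.

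Second, Lemma \ref{Re(f_0)} tells us that $\operatorname{Re}(f_0(\tau_l))$ is strictly increasing in $l\in[0,k)$, hence for every admissible $l<k-2$ there exists $\varepsilon_l>0$ with $\operatorname{Re}(f_0(\tau_l))\le \operatorname{Re}(f_0(\tau_{k-2}))-\varepsilon_l=-\alpha-\varepsilon_l$. Consequently
\[
c_l\operatorname{Re}(J_{n,l}) = O\!\left(n^{-1/2}e^{-(\alpha+\varepsilon_l)n}\right) = e^{-\alpha n}\cdot o(1)\qquad (l<k-2),
\]
so only the term $l=k-2$ contributes to leading order in Lemma \ref{lem_S=sum_J}. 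Since $c_{k-2}\ne 0$ and $g(\tau_{k-2})\ne 0$, we obtain
\[
\widetilde{S}_n = c_{k-2}|A_{n,k-2}|\,e^{-\alpha n}\Bigl(\cos(n\omega+\varphi)+o(1)\Bigr).
\]

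Third, I would pass to the absolute value. Using the elementary bound $\bigl||a+b|-|a|\bigr|\le |b|$, whenever $b=o(1)$ we have $|a+b|=|a|+o(1)$ regardless of whether $|a|$ is bounded away from $0$; applying this with $a=\cos(n\omega+\varphi)$ gives
\[
|\widetilde{S}_n| = |c_{k-2}|\,|A_{n,k-2}|\,e^{-\alpha n}\Bigl(|\cos(n\omega+\varphi)|+o(1)\Bigr).
\]
Finally, since $|c_{k-2}|\,|A_{n,k-2}|=n^{-1/2}\cdot |g(\tau_{k-2})|\cdot |c_{k-2}|/\sqrt{2\pi|f''(\tau_{k-2})|}=e^{o(n)}$, and Lemma \ref{lem_tilde_S_n} gives $S_n=n^{O(1)}\widetilde{S}_n$, the prefactor is absorbed into $e^{o(n)}$, yielding the claimed asymptotic.

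The only subtlety — and the step I would verify most carefully — is the absorption of the sub-dominant terms into the single cosine factor when $\cos(n\omega+\varphi)$ happens to be close to zero. The argument above works because every $l<k-2$ contributes a term whose ratio with $e^{-\alpha n}$ is exponentially small in $n$, so it is consumed by the $o(1)$ inside the brackets; and the elementary inequality $\bigl||a+b|-|a|\bigr|\le|b|$ lets us pull the absolute value past the additive $o(1)$ without losing uniformity in $n$. No other nontrivial obstacle appears: the structural content is already packaged in Lemmas \ref{lem_tilde_S_n}, \ref{lem_S=sum_J}, \ref{f_0(z)}, and \ref{Re(f_0)}.
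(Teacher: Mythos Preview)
Your proof is correct and follows essentially the same route as the paper: both combine Lemmas \ref{lem_tilde_S_n}, \ref{lem_S=sum_J}, \ref{f_0(z)}, and \ref{Re(f_0)} to isolate the dominant term $c_{k-2}\operatorname{Re}(J_{n,k-2})$, then absorb the polynomial prefactor into $e^{o(n)}$. Your explicit handling of the $o(1)$ via the inequality $\bigl||a+b|-|a|\bigr|\le|b|$ is a nice clarification of a step the paper leaves implicit.
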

	
	\begin{proof}
		By Lemmas \ref{lem_tilde_S_n} and \ref{lem_S=sum_J}, we have $S_n=n^{O(1)}\cdot\widetilde{S}_n$, where
		\[
		\widetilde{S}_n= \sum_{0 \leqslant l \leqslant k-2 \atop l \equiv k \pmod{2}} c_l \operatorname{Re}(J_{n,l}),\qquad c_{k-2}\neq0.
		\]
		By Lemmas \ref{f_0(z)} and \ref{Re(f_0)}, the quantities $|J_{n,l}|$ ($l \neq k-2$) are exponentially smaller compared to $|J_{n,k-2}|$ as $n \to +\infty$. Thus, we have
		\[
		\widetilde{S}_n= c_{k-2}\operatorname{Re}(J_{n,k-2}) + o(|J_{n,k-2}|).
		\]
		Note that $J_{n,k-2} = \exp((-\alpha+i\omega+o(1))n +i\varphi)$, where $o(1)$ is a real quantity. 
		Since $c_{k-2}\neq0$, we obtain
		\[
		|S_n|=\exp\big(-\alpha n + o(n)\big)\cdot\big(|\cos(n\omega + \varphi)|+o(1)\big). \qedhere
		\]
	\end{proof}
	
	\section{Proof of the main theorem}
	\label{Sect_proof}
	
	Throughout this section, we fix an integer $k \geqslant 2$ and take 
	\[
	r = \lfloor \log^2 q\rfloor.
	\]
	Assume that the integer $q$ is large enough such that $r>2k$. 
	The complex number $\tau_{k-2}$ is given by Lemma \ref{roots_of_f'}. 
	The real numbers $\alpha,\omega, \varphi$ are provided by Lemma \ref{lem_asym_S_n}. 
	The real number $\beta$ is defined by \eqref{def_beta}. 
	Note that $\alpha,\omega,\varphi,\beta$ depend only on $k$ and $q$. 
	
	\begin{lemma}
		\label{lem_asym_tau}
		As $q \to +\infty$, we have 
		\[
		|\tau_{k-2}-q| = \exp\left( -\frac{\log^2 q}{k} + O(\log q) \right),
		\]
		where the implicit constant depends only on $k$.
	\end{lemma}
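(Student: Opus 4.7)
I will exploit Lemma \ref{roots_of_f'} (2): $\tau_{k-2}$ is one of the $k$ zeros of the polynomial $P(z)$ (see \eqref{def_P}) lying in the half-plane $\operatorname{Re} z > -r/2$. The strategy is to localize all $k$ of these zeros to a thin annulus centered at $z=q$ via Rouch\'e's theorem, and then to compute the radius of that annulus.

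Set $R_0 := (2q+r)\bigl(\tfrac{q}{q+r}\bigr)^{(q+k)/k}$ and write $P = A - B$, where $A(z) := (z+r)^{q+k}(z-q)^k$ and $B(z) := z^{q+k}(z+q+r)^k$. On a circle $|z-q|=\rho$ with $\rho \ll q$, elementary bounds give $|z+r| = (q+r)(1+O(\rho/q))$, $|z|=q(1+O(\rho/q))$, and $|z+q+r|=(2q+r)(1+O(\rho/q))$, whence
\[
\frac{|A(z)|}{|B(z)|} = \left(\frac{\rho}{R_0}\right)^{\!k}\!\bigl(1 + O(\rho)\bigr).
\]
Taking $\rho = 2R_0$ (resp.\ $\rho = R_0/2$), the ratio exceeds (resp.\ is below) $1$ uniformly on the circle once $q$ is large, because $R_0 \to 0$ as $q \to +\infty$. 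Rouch\'e's theorem thus yields: $P$ has the same number of zeros in $|z-q|<2R_0$ as $A$, namely $k$; and $P$ has the same number of zeros in $|z-q|<R_0/2$ as $-B$, namely $0$ (the zeros of $B$ are at $z=0$ and $z=-q-r$, far from $q$). Hence $P$ has exactly $k$ zeros in the annulus $R_0/2 < |z-q| < 2R_0$, and since this annulus lies in $\operatorname{Re} z > -r/2$ and accounts for all $k$ zeros of $P$ in that half-plane, we conclude $R_0/2 \leq |\tau_{k-2}-q| \leq 2R_0$.

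To finish, I expand $\log R_0$. Using $\log(1+r/q) = r/q + O(r^2/q^2)$ together with $r = \lfloor \log^2 q \rfloor$,
\[
\log R_0 = \log(2q+r) - \frac{q+k}{k}\log\!\left(1+\frac{r}{q}\right) = \log q + O(1) - \frac{r}{k} + O\!\left(\frac{r^2}{kq}\right) = -\frac{\log^2 q}{k} + O(\log q).
\]
Therefore $\log|\tau_{k-2}-q| = \log R_0 + O(1) = -\log^2 q/k + O(\log q)$, which is the claimed estimate. The only step requiring genuine care is verifying the two Rouch\'e inequalities uniformly on the circles $|z-q|=2R_0$ and $|z-q|=R_0/2$; once the ratio $|A|/|B|$ is controlled as above, the rest of the argument is a routine logarithmic expansion.
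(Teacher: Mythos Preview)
Your argument is correct. Both your proof and the paper's rely on Lemma~\ref{roots_of_f'}\,(2) and Rouch\'e's theorem, but the executions diverge in the second step. The paper first applies Rouch\'e once on a circle of \emph{fixed} radius $\varepsilon_0$ to obtain the qualitative statement $\tau_{k-2}=q+o(1)$, and then extracts the sharp asymptotic from the saddle-point equation $\operatorname{Re}\bigl(f'(\tau_{k-2})\bigr)=0$, which rearranges to
\[
k\log|q-\tau_{k-2}| = -(q+k)\log\Bigl|1+\tfrac{r}{\tau_{k-2}}\Bigr| + k\log|\tau_{k-2}+r+q|.
\]
You instead apply Rouch\'e \emph{twice}, on circles of quantitatively chosen radii $2R_0$ and $R_0/2$, to sandwich $|\tau_{k-2}-q|$ directly. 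Your route is purely polynomial (it never revisits the analytic definition of $\tau_{k-2}$ via $f'$), which is arguably more self-contained; the paper's route separates the localization from the computation and makes the source of the constant $\log^2 q/k$ slightly more transparent. Both are equally short, and the final logarithmic expansion is essentially the same in either case.
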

	
	\begin{proof}
		Fix any $\varepsilon_0 \in (0,1)$. 
		On the circle $|z-q| =  \varepsilon_0$, we have
		\[
		\left| \frac{z-q}{z+r+q}  \right|^k \geqslant \left( \frac{\varepsilon_0}{2q+r+\varepsilon_0} \right)^k \quad\text{and}\quad \left| \frac{z}{z+r} \right|^{q+k} \leqslant \left( \frac{q+\varepsilon_0}{q+r-\varepsilon_0} \right)^{q+k}.
		\]
		Note that for any sufficiently large $q \geqslant q_0(\varepsilon_0,k)$, we have
		\[
		\left( \frac{\varepsilon_0}{2q+r+\varepsilon_0} \right)^k > \left( \frac{q+\varepsilon_0}{q+r-\varepsilon_0} \right)^{q+k}.
		\]
		It follows from Rouch{\'e}'s theorem that the function
		\[
		\left( \frac{z-q}{z+r+q}  \right)^k - \left( \frac{z}{z+r} \right)^{q+k} = \frac{P(z)}{(z+r+q)^k(z+r)^{q+k}}
		\]
		has exactly $k$ zeros (counted with multiplicity) inside the disk $|z-q|\leqslant \varepsilon_0$, where $P(z)$ is the polynomial defined by \eqref{def_P}. 
		Then, by Lemma \ref{roots_of_f'} (2), we obtain $|\tau_{k-2}-q| \leqslant \varepsilon_0$. 
		In other words, 
		\begin{equation}\label{811}
			\tau_{k-2} = q + o(1) \quad\text{as~} q \to +\infty.
		\end{equation}
		
		Now, by the definition of $\tau_{k-2}$ , we have $\operatorname{Re}\left(f'(\tau_{k-2})\right) = 0$; that is,
		\begin{equation}
			\label{812}
			k\log|q-\tau_{k-2}| = -(q+k)\log\left| 1+ \frac{r}{\tau_{k-2}} \right| + k\log|\tau_{k-2}+r+q|.
		\end{equation}
		Substituting $r= \lfloor \log^2 q \rfloor$ and \eqref{811} into the right-hand side of \eqref{812}, we obtain the desired estimate
		\[
		|\tau_{k-2}-q| = \exp\left( -\frac{\log^2 q}{k} + O(\log q) \right). \qedhere
		\]
	\end{proof}
	
	\begin{lemma}
		\label{lem_asym_alpha_beta}
		We have
		\[
		\alpha \sim q\log^3 q \quad\text{and}\quad \beta \sim (\log 2) \cdot q\log^2 q  \quad \text{as~} q \to +\infty.
		\]
	\end{lemma}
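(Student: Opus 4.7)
The plan is to substitute $r = \lfloor \log^2 q \rfloor$ into the explicit expressions $\alpha = -\operatorname{Re}(f_0(\tau_{k-2}))$, with $f_0$ from \eqref{def_f_0}, and $\beta$ from \eqref{def_beta}, then extract the leading asymptotics via standard expansions of the logarithm. The main inputs beyond elementary estimates are Lemma \ref{lem_asym_tau}, which gives the very precise bound $|q-\tau_{k-2}| = \exp(-\log^2 q/k + O(\log q))$, and Mertens's theorem in the form $\sum_{p \leq q} \log p/(p-1) = \log q + O(1)$, which controls the prime sum appearing in both $\alpha$ and $\beta$.

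For $\beta$, the estimate is direct. Each of the four summands in \eqref{def_beta} with $r = \lfloor \log^2 q \rfloor$ behaves as follows: $rq\log 2 = (\log 2)\, q \log^2 q\, (1 + o(1))$, which is the leading term; $k(2q+r)\log(q+r/2) = O(q \log q)$, since $\log(q+r/2) = \log q + O(1)$; $-kr\log(r/2) = O(\log^2 q \cdot \log\log q)$; and $2kq\sum_{p\mid q}\log p/(p-1) = O(q \log q)$ by Mertens. Each of the last three is $o(q\log^2 q)$, yielding $\beta \sim (\log 2)\, q \log^2 q$.

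For $\alpha$, I write $\tau_{k-2} = q + \delta$, where $|\delta|$ is extremely small by Lemma \ref{lem_asym_tau}, so that $|\tau_{k-2}+r+q| = 2q+r + O(|\delta|)$ and $|\tau_{k-2}+r| = q+r + O(|\delta|)$. Taking real parts of the five summands of $f_0(\tau_{k-2})$ and using $\log(q+r) = \log q + O(r/q)$, the contributions are:
\[
k(r+q)\log|\tau_{k-2}+r+q| = kq\log q + k\log^3 q + O(q),
\]
\[
kq\log|q-\tau_{k-2}| = -q\log^2 q + O(q\log q) \quad (\text{by Lemma \ref{lem_asym_tau}}),
\]
\[
-r(q+k)\log|\tau_{k-2}+r| = -q\log^3 q - k\log^3 q + O(\log^4 q),
\]
while $rq\log r = 2q\log^2 q \cdot \log\log q + O(q\log^2 q)$ and the prime term is $O(q\log q)$. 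The two $k\log^3 q$ contributions cancel, leaving $\operatorname{Re}(f_0(\tau_{k-2})) = -q\log^3 q + O(q\log^2 q \cdot \log\log q)$, and hence $\alpha = q\log^3 q\,(1+o(1))$.

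The only mild subtlety is that the contribution $kq\log|\delta| \sim -q\log^2 q$ looks a priori competitive, since Lemma \ref{lem_asym_tau} places $\tau_{k-2}$ exponentially close to $q$; however, it is smaller by a factor $\log q$ than the dominant contribution $-rq\log q \sim -q\log^3 q$ from $\log|\tau_{k-2}+r|$. The $rq\log r \sim 2q\log^2 q \log\log q$ term is likewise smaller by a factor $\log q / \log\log q$, and all remaining error terms are visibly $o(q\log^3 q)$. The whole argument is routine bookkeeping of logarithmic expansions with no genuine obstacle.
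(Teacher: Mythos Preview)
Your argument is correct and follows essentially the same route as the paper: substitute $r=\lfloor\log^2 q\rfloor$ into the explicit formulas \eqref{def_f_0} and \eqref{def_beta}, invoke Lemma~\ref{lem_asym_tau} for $\log|q-\tau_{k-2}|$, and read off the leading term. Two minor remarks: the error $O(\log^4 q)$ you quote for $-r(q+k)\log|\tau_{k-2}+r|$ does not absorb the $O(q\log q)$ coming from $r-\log^2 q$, but this is harmless since everything is swallowed by the final $O(q\log^2 q\,\log\log q)$; and Mertens is not needed for the prime sum, since $\sum_{p\mid q}\log p/(p-1)\leqslant\sum_{p\mid q}\log p\leqslant\log q$ directly.
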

	
	\begin{proof}
		By Equations \eqref{def_alpha_omega} and \eqref{def_f_0}, we have
		\begin{align}
			\alpha =&~ -\operatorname{Re} f_0(\tau_{k-2}) \notag\\
			=&~ \operatorname{Re}\Big( 
			r(q+k)\log(\tau_{k-2}+r) - k(r+q)\log(\tau_{k-2}+r+q) - kq\log(q-\tau_{k-2}) \Big) \notag\\
			&~-rq\log(r) - 2kq\sum_{p \mid q \atop p \text{~prime}} \frac{\log p}{p-1}. \label{821}
		\end{align}
		Substituting $r=\lfloor \log^2 q\rfloor$ into \eqref{821} and using Lemma \ref{lem_asym_tau}, we obtain
		\[
		\alpha = q \log^3 q + O\left( q \log^2 q \cdot \log\log q \right) \quad \text{as~} q \to +\infty.
		\]
		
		On the other hand, by \eqref{def_beta} and $r=\lfloor \log^2 q\rfloor$ we have
		\begin{align*}
			\beta &= rq\log 2 + k \left(  (2q+r)\log\left(q+\frac{r}{2}\right) - r\log \frac{r}{2} + 2q\sum_{p \mid q \atop p \text{~prime}} \frac{\log p}{p-1}  \right) \\
			&= (\log 2)\cdot q \log^2 q + O(q \log q) \quad\text{as~} q \to +\infty. 
		\end{align*}
	\end{proof}
	
	\begin{lemma}
		\label{lem_omega}
		If $k=2$, then $\omega=\varphi=0$. 
		If $k \geqslant 3$ and $q$ is sufficiently large, then we have
		\[
		-(k-2)q\pi < \omega < -(k-2)q\pi+\pi.
		\]
	\end{lemma}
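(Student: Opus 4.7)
The plan is to split into two cases based on the value of $k$.

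For $k=2$ we have $\lambda = k-2 = 0$, so Lemma \ref{roots_of_f'}(1) gives $\tau_{k-2} = \mu_0 \in (0,q) \subset \mathbb{R}$. Then $\mu_0$, $\mu_0 + r$, $\mu_0 + r + q$, and $q - \mu_0$ are all positive reals, so every logarithm in \eqref{def_f_0} takes real values at $\mu_0$, yielding $\omega = \operatorname{Im}(f_0(\mu_0)) = 0$. The same reasoning applied to \eqref{f''} and \eqref{def_g} gives $f''(\mu_0) > 0$ and $g(\mu_0) > 0$, so $\arg(f''(\mu_0)) = \arg(g(\mu_0)) = 0$ and $\varphi = 0$.

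For $k \geq 3$ and large $q$, I would use Lemma \ref{lem_asym_tau} to write $\tau_{k-2} = q + \delta$ with $\delta = \rho e^{i\theta}$, where $\rho = \exp(-\log^2 q/k + O(\log q))$ is exponentially small and $\theta \in (0,\pi)$ since $\operatorname{Im}(\tau_{k-2}) > 0$. The key point is that $\arg(q - \tau_{k-2}) = \theta - \pi$ under the principal branch (because $-\delta = \rho e^{i(\theta+\pi)}$ with $\theta+\pi \in (\pi,2\pi)$), while the other three relevant arguments
\[
\phi_1 := \arg\tau_{k-2},\quad \phi_2 := \arg(\tau_{k-2}+r),\quad \phi_3 := \arg(\tau_{k-2}+r+q)
\]
are all of size $O(\rho/q)$ by Taylor-expanding $\log(1+\delta/x)$ around $x = q, q+r, 2q+r$ respectively. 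Taking imaginary parts of the equation $f'(\tau_{k-2}) = (k-2)\pi i$ gives
\[
(q+k)(\phi_1 - \phi_2) + k\phi_3 - k(\theta - \pi) = (k-2)\pi,
\]
which forces $\theta = 2\pi/k + O(r\rho/q)$; in particular $\sin\theta$ tends to $\sin(2\pi/k) > 0$, which is the reason $k \geq 3$ is needed.

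Using this relation to eliminate $k(\theta - \pi)$ from the imaginary part of $f_0(\tau_{k-2})$, the constant $-(k-2)q\pi$ emerges from the contribution $kq(\theta-\pi)$, and after regrouping one obtains
\[
E := \omega + (k-2)q\pi = k(r+2q)\phi_3 + q(q+k)\phi_1 - (q+k)(q+r)\phi_2.
\]
Substituting the leading estimates $\phi_1 = \rho\sin\theta/q + O(\rho^2/q^2)$, $\phi_2 = \rho\sin\theta/(q+r) + O(\rho^2/(q+r)^2)$, $\phi_3 = \rho\sin\theta/(2q+r) + O(\rho^2/(2q+r)^2)$, the three leading pieces reduce to $k\rho\sin\theta + (q+k)\rho\sin\theta - (q+k)\rho\sin\theta = k\rho\sin\theta$, so $E = k\rho\sin\theta + O(\rho^2)$. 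Since $\sin\theta$ stays bounded below by a positive constant and $\rho$ is exponentially small, we conclude $0 < E < \pi$ for all sufficiently large $q$, which is the desired bound. The main delicate point is the algebraic cancellation of the two $O((q+k)\rho)$ terms in $E$; this is what ensures $E$ is truly of order $\rho$, keeping it both strictly positive (via $\sin(2\pi/k) > 0$ for $k \geq 3$) and far below $\pi$.
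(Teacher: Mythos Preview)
Your proof is correct and follows essentially the same strategy as the paper: for $k\geqslant 3$ you use the saddle equation $f'(\tau_{k-2})=(k-2)\pi i$ to eliminate the large term $\arg(q-\tau_{k-2})$ from $\omega$, obtain a formula for $E=\omega+(k-2)q\pi$ in terms of the three small arguments $\phi_1,\phi_2,\phi_3$, and then estimate via Lemma~\ref{lem_asym_tau}. The only cosmetic differences are that you work in polar coordinates $\tau_{k-2}=q+\rho e^{i\theta}$ and go the extra step of computing $\theta\to 2\pi/k$ (the paper simply invokes $y_{k-2}>0$ directly), and that your coefficient $k(r+2q)$ on $\phi_3$ differs from the paper's $kq$ in \eqref{833}---your version is in fact the correct one, though both yield a positive $o(1)$ quantity and hence the same conclusion.
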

	
	\begin{proof}
		If $k=2$, then $\tau_{k-2}=\mu_0$ is a real number in the interval $(0,q)$. 
		It follows from \eqref{f''} and \eqref{def_g} that both $f''(\tau_{k-2})$ and $g(\tau_{k-2})$ are positive real numbers. 
		Hence, $\omega=\varphi=0$.
		
		In the following, we assume that $k \geqslant 3$. 
		Taking the imaginary part of the identity $f'(\tau_{k-2})=(k-2)\pi i$, we have 
		\begin{equation}
			\label{831}
			(q+k)\Big(\arg(\tau_{k-2}) -\arg(\tau_{k-2}+r)\Big) + k\Big(\arg(\tau_{k-2}+r+q) - \arg(q-\tau_{k-2})\big) = (k-2)\pi.
		\end{equation}
		By Equations \eqref{def_varphi} and \eqref{def_f_0}, we have
		\begin{align}
			\omega =&~ \operatorname{Im} f_0(\tau_{k-2}) \notag\\
			=&~ k(r+q)\arg(\tau_{k-2}+r+q) + kq\arg(q-\tau_{k-2}) - r(q+k)\arg(\tau_{k-2}+r). 
			\label{832}
		\end{align}
		Multiplying \eqref{831} by $q$, and adding the resulting identity to \eqref{832}, we deduce that
		\begin{align}
			&~ \omega + (k-2)q\pi \notag\\
			=&~ q(q+k)\arg(\tau_{k-2}) - (q+k)(q+r)\arg(\tau_{k-2}+r) +kq \arg(\tau_{k-2}+r+q). 
			\label{833}
		\end{align}
		Now, we write $\tau_{k-2} = x_{k-2}+iy_{k-2}$, where $x_{k-2},\ y_{k-2} \in \mathbb{R}$. 
		Since $k \geqslant 3$, we have
		\[
		y_{k-2}>0. 
		\]
		By Lemma \ref{lem_asym_tau}, we have
		\[
		x_{k-2} = q + o(1) \quad\text{and}\quad y_{k-2} = o(1) \quad\text{as}\ q \to+\infty.
		\]
		Therefore, we have
		\begin{align*}
			\arg(\tau_{k-2}) &= \arctan \frac{y_{k-2}}{x_{k-2}} = \frac{y_{k-2}}{x_{k-2}} + O\left( \frac{y_{k-2}^3}{q^3} \right), \\
			\arg(\tau_{k-2}+r) &= \arctan \frac{y_{k-2}}{x_{k-2}+r} = \frac{y_{k-2}}{x_{k-2}+r} + O\left( \frac{y_{k-2}^3}{q^3} \right), \\
			\arg(\tau_{k-2}+r+q) &=\arctan \frac{y_{k-2}}{x_{k-2}+r+q} \sim \frac{y_{k-2}}{2q}.  
		\end{align*}
		Substituting above estimates into \eqref{833}, we obtain
		\[
		\omega + (k-2)q\pi =\left(\frac{k}{2} + o(1) \right) y_{k-2} \quad\text{as~} q \to +\infty.
		\]
		Therefore, if $k \geqslant 3$ and $q$ is sufficiently large, then $-(k-2)q\pi < \omega < -(k-2)q\pi+\pi$. 
	\end{proof}
	
	Now, we prove our main theorem.
	
	\begin{proof}[\textbf{Proof of Theorem \ref{main}}]
		Fix an integer $k \geqslant 2$ and let $q$ be a sufficiently large positive integer such that $r = \lfloor \log^2 q \rfloor > 2k$. 
		
		For any $n \in q!\mathbb{N}$, Lemmas \ref{lem_linear_forms} and \ref{lem_beta} imply that
		\[ 
		\widehat{S}_n := qd_{rqn}^{k} \cdot S_n = \widehat{\rho}_{n,0} +  \widehat{\rho}_{n,1}\delta_k\zeta(k) + \sum_{1 \leqslant a < q/2} \widehat{\rho}_{n,a/q}  \zeta^{-}\left( k, \frac{a}{q} \right) 
		\]
		is a linear combination of 
		\[
		1, \ \delta_k\zeta(k), \ \zeta^{-}\left( k,\frac{a}{q} \right) \quad (1 \leqslant a < q/2)
		\]
		with integer coefficients. 
		Note that the coefficients $\widehat{\rho}_{n,1}$ and $\widehat{\rho}_{n,a/q}$ ($1 \leqslant a < q/2$) are divided by $d_{rqn}^k$.
		Moreover, we have
		\[ 
		\max \left\{ |\widehat{\rho}_{n,0}|, |\widehat{\rho}_{n,1}|, |\widehat{\rho}_{n,a/q}| ~\Big|~ 1 \leqslant a < q/2 \right\} \leqslant \exp\left( \widehat{\beta} n + o(n) \right) \quad \text{as~} n \to +\infty,  
		\]
		where $\widehat{\beta} = \beta+krq$. 
		By Lemma \ref{lem_asym_S_n}, we have
		\[
		|\widehat{S}_n| = \exp\big( -\widehat{\alpha} n + o(n) \big) \cdot \big( |\cos(n\omega + \varphi)| + o(1)  \big),
		\]
		where $\widehat{\alpha}=\alpha -krq$. 
		
		If $q$ is sufficiently large, then Lemma \ref{lem_omega} implies that
		\[
		\text{either~} \omega \notin \pi \mathbb{Z}, \quad\text{or~} \varphi \notin \frac{\pi}{2} + \pi\mathbb{Z}.  
		\]
		Let 
		\[ 
		d = \dim_{\mathbb{Q}}\operatorname{Span}_{\mathbb{Q}}\left( \left\{ 1,\delta_k\zeta(k) \right\} \bigcup \left\{  \zeta^-\left( k, \frac{a}{q} \right) ~\Big|~ 1 \leqslant a < \frac{q}{2} \right\} \right).
		\]
		By Theorem \ref{thm_Nes_osc}, we obtain
		\[
		d \geqslant 1+\frac{\widehat{\alpha}+(d-1)krq}{\widehat{\beta}}.
		\]
		Thus,
		\[
		d \geqslant 1 + \frac{\widehat{\alpha}}{\widehat{\beta}-krq}.
		\]
		Then, Corollary \ref{lem_S_n_is_in_Q+V^odd} (3) implies that
		\[
		\dim_{\mathbb{Q}}V_k^{-}(q) \geqslant d-1 \geqslant \frac{\widehat{\alpha}}{\widehat{\beta}-krq}.
		\]
		
		Finally, by Lemma \ref{lem_asym_alpha_beta} we have
		\[
		\widehat{\alpha} \sim q\log^3 q\quad\text{and}\quad \widehat{\beta} -krq \sim (\log 2) \cdot q\log^2 q \quad\text{as~} q \to +\infty.
		\]
		Therefore,
		\[ 
		\dim_{\mathbb{Q}} V_k^{-}(q) \geqslant \left(\frac{1}{\log 2} - o(1)\right) \cdot \log q \quad\text{as~} q \to +\infty.
		\]
		The proof of Theorem \ref{main} is complete.
	\end{proof}

	\vspace*{3mm}
	\begin{flushright}
		\begin{minipage}{148mm}\sc\footnotesize
			L.\,L.: School of Mathematical Sciences, Xiamen University, Fujian, China \\
			{\it E-mail address}: \href{mailto:lilaimath@gmail.com}{{\tt lilaimath@gmail.com}} \vspace*{3mm}
		\end{minipage}
	\end{flushright}
	
	\begin{flushright}
		\begin{minipage}{148mm}\sc\footnotesize
			J.\,L.: School of Mathematical Sciences, Peking University, Beijing, China \\
			{\it E-mail address}: \href{mailto:jialimath001@pku.org.cn}{{\tt jialimath001@pku.org.cn}} \vspace*{3mm}
		\end{minipage}
	\end{flushright}
	

\begin{thebibliography}{99}
		
		\bibitem{BR2001}  K. Ball and T. Rivoal, \textit{Irrationalit{\'e} d'une infinit{\'e} de valeurs de la fonction z{\^e}ta aux entiers impairs}, Invent. Math. \textbf{146} (2001), 193--207.
		
		\bibitem{CDT2024+} F. Calegari, V. Dimitrov and Y. Tang, \textit{The linear independence of $1$, $\zeta(2)$, and $L(2,\chi_{-3})$}, Preprint \href{https://arxiv.org/abs/2408.15403v2}{{\tt arXiv:2408.15403v2 [math.NT]}} (2024), 218~pages.
		
		\bibitem{CC1982} P. Chowla and S. Chowla, \textit{On irrational numbers}, Norske Vid. Selsk. Skr. (Trondheim) \textbf{3} (1982), 1--5. (See also S. Chowla, Collected Papers, Vol. 3. CRM, Montreal, 1999, 1383--1387.)
		
		\bibitem{Fis2012} S. Fischler, \textit{Nesterenko's criterion when the small linear forms oscillate}, Archiv der Math. \textbf{98} (2012), no. 2, 143--151.
		
		\bibitem{Fis2020} S. Fischler, \textit{Irrationality of values of $L$-functions of Dirichlet characters}, J. Lond. Math. Soc. (2) \textbf{101} (2020), no. 2, 857--876.
		
		\bibitem{FZ2010} S. Fischler and W. Zudilin, 
		\textit{A refinement of Nesterenko's linear independence criterion with applications to zeta values}, 
		Math. Ann. \textbf{347} (2010), no. 4, 739--763.
		
		\bibitem{GMR2011} S. Gun, M.\,R. Murty and P. Rath, \textit{On a conjecture of Chowla and Milnor}, Canad. J. Math. \textbf{63} (2011), no. 6, 1328--1344.
		
		\bibitem{Lan1999} S. Lang, \textit{Complex analysis}, Fourth edition, Grad. Texts in Math., \textbf{103}, Springer-Verlag, New York, 1999. xiv+485 pp.
		
		\bibitem{Mil1983} J. Milnor, \textit{On polylogarithms, Hurwitz zeta functions, and their Kubert identities}, Enseignement Math. (2) \textbf{29} (1983), 281--322.
		
		\bibitem{Nes1985} Y. Nesterenko, \textit{On the linear independence of numbers}, Vestnik Moskov. Univ. Ser. I Mat. Mekh. [Moscow Univ. Math. Bull.] \textbf{40} (1985), 46--49 [69--74].
		
		\bibitem{Sie1929} C.\,L. Siegel, \textit{Uber einige Anwendungen diophantischer Approximationen}, Abh. Preuss. Akad. Wiss., Phys.-Math. Kl., (1929), 1--70.
		
		\bibitem{Won2001} R. Wong, \textit{Asymptotic approximations of integrals}, Comput. Sci. Sci. Comput., Academic Press, Inc., Boston, MA, 1989. xiv+546 pp.
		
		\bibitem{Zud2002} W. Zudilin, \textit{Irrationality of values of the Riemann zeta function}, Izvestiya Ross. Akad. Nauk
		Ser. Mat. [Izv. Math.] \textbf{66} (2002), 49--102 [489--542].
		
	\end{thebibliography}
\end{document}